\documentclass[11pt]{article}

\usepackage[margin=1in]{geometry}
\usepackage{amsmath,amssymb,amsthm,mathtools}
\usepackage{enumitem}
\usepackage{xcolor}
\usepackage{url}
\usepackage[numbers,sort&compress]{natbib}
\usepackage{hyperref}
\usepackage{authblk}

\hypersetup{
  colorlinks=true,
  linkcolor=blue!50!black,
  citecolor=blue!50!black,
  urlcolor=blue!50!black
}

\newtheorem{theorem}{Theorem}[section]
\newtheorem{proposition}[theorem]{Proposition}
\newtheorem{lemma}[theorem]{Lemma}
\newtheorem{corollary}[theorem]{Corollary}

\newtheorem{remark}[theorem]{Remark}
\newtheorem{example}[theorem]{Example}

\DeclareMathOperator{\Res}{Res}
\DeclareMathOperator{\Spec}{Spec}
\DeclareMathOperator{\Reg}{Reg}
\DeclareMathOperator{\ord}{ord}
\DeclareMathOperator{\Gal}{Gal}
\DeclareMathOperator{\diag}{diag}
\DeclareMathOperator{\Fitt}{Fitt}

\newcommand{\Qp}{\mathbb{Q}_p}

\newcommand{\Fcal}{\mathcal{F}}
\newcommand{\Rcal}{\mathcal{R}}

\title{Iwasawa-Type Spectral Resultant Growth Laws for Grover Walks on Graph Towers}
\author[1]{Jir\^o Akahori\thanks{E-mail: \texttt{akahori@se.ritsumei.ac.jp}. ORCID: 0000-0001-5214-9585}}
\author[1]{Taro Hayashi\thanks{Corresponding author. E-mail: \texttt{haya4taro@gmail.com}. ORCID: 0009-0005-0145-8042}}
\author[2]{Ryoichi Suzuki\thanks{E-mail: \texttt{rsuzukimath@gmail.com}. ORCID: 0000-0001-9979-1882}}

\affil[1]{Department of Mathematical Sciences, College of Science and Engineering, Ritsumeikan University, 1-1-1 Noji-higashi, Kusatsu, Shiga 525-8577, Japan.}
\affil[2]{Department of Business Economics, School of Management, Tokyo University of Science, 1-11-2 Fujimi, Chiyoda, Tokyo 102-0071, Japan.}
\date{}

\begin{document}
\maketitle

\begin{abstract}
Let $X_0\leftarrow X_1\leftarrow\cdots$ be a $\mathbb Z_p^d$-tower of
finite graphs, and let $U_n$ be the Grover transition matrix on $X_n$.
We study Iwasawa-type $p$-adic growth laws for the polynomial spectral
quantities
\[
\det P(U_n),
\]
where $P(A)$ is a monic polynomial.  The basic object is the spectral
resultant
\[
\mathcal R_{X,P}(T)=\operatorname{Res}_A(\mathcal F_X(A,T),P(A)),
\]
where $\mathcal F_X(A,T)$ is the universal Grover--Ihara spectral
polynomial of the tower.  In the integral setting, this resultant generates
the zeroth Fitting ideal of a natural finite module over the Iwasawa algebra;
when the resultant is nonzero, this module is torsion.  The polynomial $P$
packages prescribed spectral values into a single spectral packet.  If $P$ is coprime to the Bass factor
$A^2-1$ and $\mathcal R_{X,P}$ does not vanish at torsion characters,
then $\det P(U_n)$ is nonzero for all $n$ and we prove a Cuoco--Monsky
type leading asymptotic formula for $v_p(\det P(U_n))$.  The leading terms are
given explicitly by the $\mu$- and $\lambda$-invariants of
$\mathcal R_{X,P}$, with a separate correction coming from the Bass
factor.  For $P(A)=A-a$, with $a\ne\pm1$ and $a$ not an eigenvalue at
any level, this recovers the leading invariants in the fixed non-eigenvalue formula
for Grover characteristic polynomials.

We also prove an equivariant factorization of spectral resultants for
finite connected $p$-group covers.  As a consequence, we obtain an unramified
equivariant Kida formula under explicit integrality and nonzero-resultant
assumptions.  Finally, when
$\gcd(P,A^2-1)=1$, we show that torsion zeros of $\mathcal R_{X,P}$ correspond exactly to
occurrences of roots of $P$ as Grover eigenvalues at finite levels.  The examples include the $K_3$-tower, non-abelian Heisenberg
$5$-group covers, and an explicit torsion-zero spectral packet.
\end{abstract}

\medskip
\noindent\textbf{2020 Mathematics Subject Classification.} Primary 11R23; Secondary 05C22, 05C25, 05C50, 81P68.

\smallskip
\noindent\textbf{Keywords.} Iwasawa theory; graph towers; Grover walks; spectral resultants; weighted Ihara zeta functions; Kida formula.

\section{Introduction}\label{sec:introduction}

Iwasawa theory for graph towers studies the asymptotic behavior of graph
invariants along towers of finite covers
\[
X=X_0\longleftarrow X_1\longleftarrow X_2\longleftarrow\cdots
\]
whose Galois groups are quotients of $\mathbb Z_p^d$. In the unweighted
case, the primary invariant is the number of spanning trees. This
invariant plays the role of the class number in classical Iwasawa theory
\cite{Iwasawa72,CM81}. Analogous asymptotic problems also appear for
function fields and link covers \cite{Wan19,TU25}. We use standard
graph-theoretic conventions from Serre and standard algebraic graph theory
references \cite{Ser03,GR01,BR12}. This graph-theoretic Iwasawa theory was
initiated by Gonet and Valli\`eres and further developed in the
$\mathbb Z_p^d$ setting by DuBose--Valli\`eres and Kleine--M\"uller
\cite{Gon22,Val21,DV23,KM24}. Related computational and structural aspects
of graph towers were developed by McGown--Valli\`eres \cite{MV23,MV24},
Kida-type formulae for graph towers were studied by Ray--Valli\`eres and
Kataoka \cite{Kida80,RV25,Kat24}, and ramified graph towers were studied by
Gambheera--Valli\`eres \cite{GV24}. Weighted graph variants are controlled
by weighted Artin--Ihara $L$-functions and their three-term determinant
formulae, following the work of Mizuno--Sato and Sato on weighted
complexities and weighted zeta functions \cite{MS03,MS04,MS14,Sat07}; see
also the Ihara--Bass--Stark--Terras theory of graph zeta functions
\cite{Ihara66,Bass92,StarkTerras96,Ter11}, Northshield's note on graph zeta
functions \cite{Northshield98}, and \cite[Theorem~2.16]{AMT2026}. The
$p$-adic asymptotic input used below goes back to Cuoco--Monsky and Monsky
\cite{CM81,Mon81}; related multi-variable asymptotic phenomena also appear
in function-field and topological settings \cite{Wan19,TU25}.

We also mention recent work of Adachi--Mizuno--Murooka--Tateno
\cite{AMMT2026} on distributions of Iwasawa $\lambda$-invariants of
constant $\mathbb Z_p$-towers over supersingular isogeny graphs. Their
work fixes primes $r$ and $p$, lets the isogeny prime $\ell$ vary, and
studies the distribution of $\lambda$-invariants of graph-theoretic
complexity characteristic elements using newforms, Galois representations,
and the Chebotarev density theorem.  The present paper follows a complementary direction.  We fix a graph tower and study spectral resultants
attached to Grover transition matrices, in particular the quantities
$\det P(U_n)$. Thus the two settings concern different kinds of Iwasawa
invariants: complexity-type $\lambda$-invariants varying with the isogeny
prime on the one hand, and Grover spectral resultants on a fixed tower on
the other.

The purpose of this paper is to develop Iwasawa-type growth laws for the
spectral quantities $\det P(U_n)$ attached to Grover walks.  For
background on quantum walks and search algorithms, see Portugal's
monograph \cite{Por18}.

Let $U_n$ be the transition matrix
of the Grover walk on $X_n$. Adachi--Mizuno--Tateno developed Iwasawa
theory for weighted graphs and, in the final section of their paper,
applied it to discrete-time quantum walks, building on the relation between Grover walks and zeta functions \cite{KS12,EHSW06,AMT2026}. In particular,
their Theorem~7.3 proves an Iwasawa-type asymptotic formula for
\[
  v_p(\det(aI_{2l_n}-U_n))
\]
when $a$ is a fixed non-eigenvalue of $U_n$ at every level
\cite[Theorem~7.3]{AMT2026}.

This paper develops a polynomial spectral refinement of
\cite[Section~7]{AMT2026}.  We keep the characteristic variable $A$ in
the determinant formula underlying \cite[Theorem~7.3]{AMT2026}, and
obtain a factorization of the whole characteristic polynomial
$\det(AI-U_n)$. This allows us to replace the linear polynomial $A-a$ by
a monic polynomial $P(A)$, subject to $P(1)P(-1)\ne0$, and to study the
polynomial spectral quantity
\[
  \det P(U_n).
\]
The point of passing from $A-a$ to $P(A)$ is that it treats prescribed
spectral packets rather than a single spectral value.  In this form, the
same resultant organizes both the finite-valued asymptotic regime and the
torsion-zero regime where prescribed Grover eigenvalues occur at finite
levels.

Let $X$ be a finite connected symmetric digraph, and assume throughout
that
\[
 d_v:=\#\{e\in E(X):o(e)=v\}\ge 1
 \qquad(v\in V(X)).
\]
Let
\[
  \alpha:E(X)\longrightarrow \Gamma\simeq\mathbb Z_p^d
\]
be a voltage assignment.  We fix an algebraic closure
\(\overline{\Qp}\) of \(\Qp\) and extend \(v_p\) to
\(\overline{\Qp}\).  Let \(K\subset\overline{\Qp}\) be a finite
extension of \(\Qp\).
All torsion character values below are taken in this fixed algebraic closure,
not necessarily in \(K\).  We equip $X$ with the Grover weight
\[
  w(e)=\frac{2}{d_{o(e)}}.
\]
Let $\tau:\Gamma\to K[[T_1,\ldots,T_d]]^\times$ be the universal
character. 
We write
\[
  \Lambda_{K,d}=K[[T_1,\ldots,T_d]].
\]
For $\gamma=(\gamma_1,\ldots,\gamma_d)\in\Gamma\simeq\mathbb Z_p^d$, we set
\[
  \tau(\gamma)=(1+T_1)^{\gamma_1}\cdots(1+T_d)^{\gamma_d}
\]
where the powers are defined by the \(p\)-adic binomial expansion.
We define the universal weighted adjacency matrix
\[
W_{\tau,\alpha}=
\left(
\sum_{e:v_i\to v_j} w(e)\tau(\alpha(e))
\right)_{i,j}\in M_m(\Lambda_{K,d}).
\]
The universal Grover--Ihara spectral polynomial is
\[
\Fcal_X(A,T)=
\det\left(A^2I_m-AW_{\tau,\alpha}+D^W(X)-I_m\right).
\]
For the Grover weight, the sum of the outgoing weights at each vertex is
equal to $2$. Hence
\[
D^W(X)=\diag\left(\sum_{o(e)=v_i}w(e)\right)_{i=1}^m=2I_m,
\]
and therefore
\[
\Fcal_X(A,T)=\det\left((A^2+1)I_m-AW_{\tau,\alpha}\right).
\]

Let
\[
W_n=\mu_{p^n}(\overline{\Qp})
=\{\zeta\in\overline{\Qp}^\times\mid \zeta^{p^n}=1\}
\]
be the group of \(p^n\)-th roots of unity in \(\overline{\Qp}\), and set
\[
W=\bigcup_{n\ge0} W_n.
\]
We also write
\[
W_n^d=\underbrace{W_n\times\cdots\times W_n}_{d\text{ times}},
\qquad
W^d=\underbrace{W\times\cdots\times W}_{d\text{ times}}.
\]
For \(\zeta=(\zeta_1,\ldots,\zeta_d)\in W_n^d\), we write
\[
\zeta-1=(\zeta_1-1,\ldots,\zeta_d-1).
\]
Here
\[
  \widehat{\Gamma_n}=\mathrm{Hom}(\Gamma_n,\overline{\Qp}^\times)
\]
denotes the character group of \(\Gamma_n\). Since
\(\Gamma_n\simeq(\mathbb Z/p^n\mathbb Z)^d\), each
\(\psi\in\widehat{\Gamma_n}\) is determined by a point
\(\zeta_\psi=(\zeta_1,\ldots,\zeta_d)\in W_n^d\), via
\[
  \psi(g_1,\ldots,g_d)=\zeta_1^{g_1}\cdots\zeta_d^{g_d}.
\]

If $\psi\in\widehat{\Gamma_n}$ corresponds to
$\zeta_\psi\in W_n^d$, then
\[
W_{\tau,\alpha}(\zeta_\psi-1)=W_\psi,
\]
where
\[
W_\psi=
\left(
\sum_{e:v_i\to v_j}w(e)\psi(\alpha_n(e))
\right)_{i,j}.
\]

We write
\[
\chi(X)=|V(X)|-l
\]
for the Euler characteristic of \(X\), where \(l\) denotes the number of
unoriented edges of \(X\).
The starting point is the universal spectral factorization:
\[
\det(AI_{2lq_n}-U_n)=
(A^2-1)^{-q_n\chi(X)}
\prod_{\zeta\in W_n^d}\Fcal_X(A,\zeta-1).
\]
Although the right-hand side is written as a rational function, it is in
fact a polynomial in $A$. This identity follows from the weighted
Ihara--Grover determinant identity recorded in
Lemma~\ref{lem:weighted-ihara-grover}, and it is the basis for the
spectral resultant construction below.

For a monic polynomial $P(A)\in K[A]$, we define the spectral resultant
element by
\[
\Rcal_{X,P}(T)=\Res_A(\Fcal_X(A,T),P(A)).
\]
In the integral setting, Proposition~\ref{prop:resultant-module} realizes
this element as the generator of the zeroth Fitting ideal of a natural
spectral resultant module over the Iwasawa algebra.
We use the convention that if $f(A)=\prod_i(A-\alpha_i)$ is monic, then
$\Res_A(f,g)=\prod_i g(\alpha_i)$. Thus, if
$C_n(A)=\det(AI-U_n)$, then $\Res_A(C_n,P)=\det P(U_n)$. Under the
assumptions $P(1)P(-1)\ne0$ and
\[
\Rcal_{X,P}(\zeta-1)\ne0\qquad(\zeta\in W^d),
\]
we prove a Cuoco--Monsky type asymptotic formula for
$v_p(\det P(U_n))$. The case $P(A)=A-a$, with $a\ne\pm1$ and
$a\notin\Spec(U_n)$ for all $n$, recovers the leading invariants in the
fixed non-eigenvalue formula.

At the level of the main theorems, the objects considered here are
different from those in the weighted-complexity part of \cite{AMT2026}.
There the characteristic element is Laplacian-type and controls weighted
complexities of graph towers.  In the present paper the characteristic
variable of the Grover transition matrix is retained, and the spectral
resultant controls polynomial spectral packets of Grover eigenvalues.  In
particular, the unramified Kida formula below concerns the spectral
resultant module attached to $\det P(U_n)$, rather than the weighted
complexity characteristic element.

The resultant formalism separates two complementary regimes.  In the
torsion non-vanishing regime, Cuoco--Monsky asymptotics govern the
valuations of $\det P(U_n)$.  In the vanishing regime, torsion zeros of
$\Rcal_{X,P}$ record prescribed spectral occurrence at finite levels.
Thus the same resultant element controls finite-valued asymptotics and
records the corresponding spectral obstruction.

The second part of the paper concerns finite $p$-group covers
$\pi:Y\to X$. We prove a formal equivariant factorization of spectral
elements. If $G=\Gal(Y/X)$ and $K$ is a splitting field for $G$, then
\[
\Fcal_Y(A,T)=\prod_{\rho\in\widehat G}\Fcal_{X,\rho}(A,T)^{d_\rho}.
\]
Here $d_\rho=\dim\rho$, and $\Fcal_{X,\rho}$ is the spectral element
twisted by $\rho$. This formal identity is the key algebraic input for
the unramified equivariant Kida formula below: it is obtained from the
decomposition of the regular representation and gives an identity of
spectral elements before specialization at torsion characters. Under the
integrality hypotheses and the nonzero-resultant assumptions stated in
Theorem~\ref{thm:kida}, and assuming the equivalent $\mu=0$ condition,
it yields
\[
  \lambda(\Rcal_{Y,P})=[Y:X]\lambda(\Rcal_{X,P}).
\]
This gives the unramified equivariant Kida formula for spectral
resultants. It shows that the spectral resultant module and its
$\lambda$-invariant are compatible with finite $p$-group covers, including
non-abelian covers.
It differs from the weighted complexity Kida formula of
\cite[Theorem~4.1]{AMT2026}. In the weighted complexity setting, a
correction term appears when $d=1$ because the Laplacian-type
characteristic element has a forced zero at the trivial character. In the
spectral resultant setting, in the torsion non-vanishing regime, no such
forced zero occurs.

Finally, motivated by the torsion-zero question raised in
\cite[Remark~6.1]{AMT2026}, we interpret the failure of the torsion non-vanishing
hypothesis. After separating the Bass factor $A^2-1$, exceptional zeros
of $\Rcal_{X,P}$ at torsion points are precisely occurrences of roots of
$P$ as Grover eigenvalues at finite levels of the tower.

The main contributions of this paper are as follows.
\begin{enumerate}[label=(\roman*)]
\item We introduce the spectral resultant element $\Rcal_{X,P}$, realize it
as a Fitting generator of a natural spectral resultant module in the
integral setting, and prove a Cuoco--Monsky type leading asymptotic formula
for $v_p(\det P(U_n))$ under a torsion non-vanishing condition.
\item We use a formal equivariant factorization of spectral elements as the key
algebraic input for an unramified equivariant Kida formula for finite $p$-group covers, including
non-abelian covers.
\item We prove an exceptional-zero/eigenvalue-occurrence criterion: after
separating the Bass factor, torsion zeros measure prescribed Grover spectral
occurrence in the finite layers of the tower, with multiplicities given by
an explicit $P$-spectral multiplicity formula.
\end{enumerate}
The passage from \(A-a\) to a monic polynomial \(P(A)\) has three roles.
It packages all roots of \(P\) into a single power series, realizes that
power series as a Fitting generator of the spectral resultant module, and
separates the torsion non-vanishing regime from the torsion-zero regime in
which prescribed Grover eigenvalues occur at finite levels.  These
features are not visible from a single specialization \(A=a\).
The examples at the end illustrate them through a linear non-vanishing
packet, a quadratic polynomial packet, and a torsion-zero packet.

Combining the weighted Ihara--Grover determinant identity, the leading
Cuoco--Monsky asymptotic theorem, and the fixed non-eigenvalue formula of
Adachi--Mizuno--Tateno, we construct the spectral resultant module, prove
the growth law for \(\det P(U_n)\), establish the equivariant factorization
and unramified Kida formula for spectral resultants, and identify
exceptional zeros with finite-level Grover spectral occurrence.

The paper is organized as follows. Section~\ref{sec:preliminaries} fixes
the notation for graph towers, Grover weights, Iwasawa invariants, and
resultants. Section~\ref{sec:universal-factorization} proves the universal
spectral factorization. Section~\ref{sec:resultant-iwasawa} introduces
spectral resultants and proves the growth formula for $\det P(U_n)$.
Section~\ref{sec:kida} proves the equivariant factorization and the unramified
quantum Kida formula. Section~\ref{sec:exceptional} interprets exceptional zeros as
eigenvalue occurrences after separating the Bass factor.
Section~\ref{sec:examples} gives the $K_3$-tower and non-abelian Heisenberg
examples.

\section{Preliminaries}\label{sec:preliminaries}

\subsection{Graphs, towers, and Grover weights}\label{subsec:graphs-towers}

Throughout the paper, $X$ is a finite connected symmetric digraph.
We fix an ordering
\[
  V(X)=\{v_1,\ldots,v_m\},
\]
and write
\[
  m=\#V(X),\qquad 2l=\#E(X),\qquad \chi(X)=m-l.
\]
Thus $l$ is the number of unoriented edges of $X$.
For an edge $e\in E(X)$, we write $o(e)$ and $t(e)$ for its origin and
terminus, and $\bar e$ for the inverse edge. For a vertex $v$, put
\[
 d_v=\#\{e\in E(X):o(e)=v\},
\]
and assume $d_v\ge1$ for every $v\in V(X)$. The Grover weight is
\[
 w(e)=\frac{2}{d_{o(e)}}.
\]
Let
\[
  \alpha:E(X)\to\Gamma\simeq\mathbb Z_p^d
\]
be a voltage assignment, with $\alpha(\bar e)=\alpha(e)^{-1}$ in
multiplicative notation. We use multiplicative notation for voltage
assignments and additive notation after identifying
$\Gamma\simeq\mathbb Z_p^d$.
For $n\ge0$, put
\[
\Gamma_n=\Gamma/p^n\Gamma,
\qquad
q_n=\#\Gamma_n=p^{dn}.
\]
Let
\[
\alpha_n:E(X)\longrightarrow\Gamma_n
\]
be the composition of $\alpha$ with the quotient map $\Gamma\to\Gamma_n$,
and set
\[
X_n=X(\Gamma_n,\alpha_n).
\]
Thus $X_n$ is the derived voltage graph with vertex set
$V(X)\times\Gamma_n$.  For each edge $e:v_i\to v_j$ of $X$ and each
$g\in\Gamma_n$, it has an edge
\[
(e,g):(v_i,g)\longrightarrow (v_j,g+\alpha_n(e)),
\]
where the addition is taken in $\Gamma_n$.

The Grover transition matrix on $X_n$ is denoted by $U_n$; thus $U_n$ has
size $2lq_n$. For a finite matrix $M$, we write $\Spec(M)$ for the set of
its eigenvalues over an algebraic closure; multiplicities are recorded
separately when needed.

We keep fixed an algebraic closure \(\overline{\Qp}\) of \(\Qp\) and a
fixed extension of \(v_p\) to \(\overline{\Qp}\). Let
\(K\subset\overline{\Qp}\) be a finite extension of \(\Qp\). For
\(n\ge0\), put
\[
W_n=\mu_{p^n}(\overline{\Qp}),\qquad W=\bigcup_{n\ge0}W_n.
\]
All torsion character values are taken in \(\overline{\Qp}\). We write
\[
\widehat{\Gamma_n}=\operatorname{Hom}(\Gamma_n,\overline{\Qp}^{\times}),
\]
and identify \(\widehat{\Gamma_n}\) with \(W_n^d\) via the chosen basis of
\(\Gamma\simeq\mathbb Z_p^d\).

\subsection{Universal characters and the Grover--Ihara spectral polynomial}\label{subsec:universal-characters}

Let
\[
\Lambda_{K,d}=K[[T_1,\ldots,T_d]].
\]
The universal character \(\tau:\Gamma\to\Lambda_{K,d}^{\times}\) is defined by
\[
\tau(\gamma_1,\ldots,\gamma_d)
=(1+T_1)^{\gamma_1}\cdots(1+T_d)^{\gamma_d},
\]
where the powers are defined by the \(p\)-adic binomial expansion.  Define
\[
W_{\tau,\alpha}=
\left(
\sum_{e:v_i\to v_j}w(e)\tau(\alpha(e))
\right)_{i,j}
\in M_m(\Lambda_{K,d}).
\]
The universal Grover--Ihara spectral polynomial is
\[
\Fcal_X(A,T)=
\det\left(A^2I_m-AW_{\tau,\alpha}+D^W(X)-I_m\right).
\]
For the Grover weight, \(D^W(X)=2I_m\), and hence
\[
\Fcal_X(A,T)=\det\left((A^2+1)I_m-AW_{\tau,\alpha}\right).
\]

\subsection{Resultants}\label{subsec:resultants}

For a monic polynomial $P$, we write $\Theta^{\mathrm{mult}}(P)$ for the
multiset of roots of $P$ in a splitting field, counted with multiplicity.
In Section~\ref{sec:exceptional}, we write $\Theta(P)$ for the set of
distinct roots and record their multiplicities separately.

For a monic polynomial $f(A)=\prod_i(A-\alpha_i)$, with roots counted with
multiplicity, we use the convention
\[
  \Res_A(f,g)=\prod_i g(\alpha_i).
\]
In particular, if $C(A)=\det(AI_N-M)$, then
\[
  \Res_A(C(A),P(A))=\det P(M)
\]
for every monic polynomial $P(A)$.

Since the leading term of $\Fcal_X(A,T)$ in $A$ is $A^{2m}$,
$\Fcal_X(A,T)$ is monic in $A$ of degree $2m$. We define
\[
\Rcal_{X,P}(T)=\Res_A(\Fcal_X(A,T),P(A)).
\]
For $P(A)=A-a$, this convention gives
\[
  \Rcal_{X,A-a}(T)=\Fcal_X(a,T).
\]
We call $\Rcal_{X,P}$ a spectral resultant element. It is a spectral
power-series invariant that plays the role of a characteristic power series
in the growth formula below. Proposition~\ref{prop:resultant-module} gives
a Fitting interpretation in the integral setting.

\begin{lemma}[Even-degree sign convention]\label{lem:even-degree}
Let $C(A)=\det(AI_N-M)$ be the characteristic polynomial of an
$N\times N$ matrix $M$, and let $P(A)\in K[A]$ be monic. Suppose that,
over a splitting field,
\[
  P(A)=\prod_{\theta\in\Theta^{\mathrm{mult}}(P)}(A-\theta),
\]
with roots counted with multiplicity. Then
\[
  \prod_{\theta\in\Theta^{\mathrm{mult}}(P)}C(\theta)
  =(-1)^{N\deg P}\det P(M).
\]
In particular, if $N$ is even, then
\[
  \prod_{\theta\in\Theta^{\mathrm{mult}}(P)}C(\theta)=\det P(M).
\]
\end{lemma}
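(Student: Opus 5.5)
The plan is to work over a splitting field $L$ of $P$ that also contains the eigenvalues of $M$. Over $L$ we factor both polynomials into linear factors and compare the two products term by term. First, write
\[
  C(A)=\prod_{j=1}^{N}(A-\beta_j),
\]
where $\beta_1,\ldots,\beta_N$ are the eigenvalues of $M$ with algebraic multiplicity. Then for each root $\theta$ of $P$,
\[
  C(\theta)=\prod_{j=1}^N(\theta-\beta_j)=(-1)^N\prod_{j=1}^N(\beta_j-\theta).
\]

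Taking the product over $\theta\in\Theta^{\mathrm{mult}}(P)$, which has $\deg P$ elements, gives
\[
  \prod_{\theta}C(\theta)=(-1)^{N\deg P}\prod_{j=1}^N\prod_{\theta}(\beta_j-\theta)=(-1)^{N\deg P}\prod_{j=1}^N P(\beta_j).
\]
The remaining step is the identity $\det P(M)=\prod_j P(\beta_j)$. To prove it, triangularize $M$ over $L$: choose $S$ with $S^{-1}MS$ upper triangular with diagonal entries $\beta_1,\ldots,\beta_N$. Then $S^{-1}P(M)S=P(S^{-1}MS)$ is upper triangular with diagonal entries $P(\beta_j)$, so its determinant is $\prod_j P(\beta_j)$. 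Alternatively, the factorization $P(M)=\prod_\theta(M-\theta I)$ gives
\[
  \det P(M)=\prod_\theta\det(M-\theta I)=\prod_\theta(-1)^N C(\theta),
\]
which yields the claim directly. I would present this second route as the main argument, since it avoids eigenvalues of $M$ entirely.

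The even case is immediate: if $N$ is even, then $N\deg P$ is even and the sign $(-1)^{N\deg P}$ is $1$.

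The only delicate point is the relation to the paper's convention $\Res_A(C,P)=\prod_\beta P(\beta)$. This is exactly $\det P(M)$ with no sign, which is consistent with the above. The sign appears only when the product is taken over the roots of $P$ rather than the roots of $C$. I expect no real obstacle beyond tracking this bookkeeping carefully.
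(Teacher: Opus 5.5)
Your proposal is correct. Your first route is exactly the paper's proof: factor $C$ over a splitting field, swap the two products to pick up the sign $(-1)^{N\deg P}$, and identify $\prod_j P(\beta_j)$ with $\det P(M)$. You also justify that last identity by triangularization, which the paper simply asserts.

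Your preferred second route is a slightly cleaner variant. It factors $P(M)=\prod_{\theta}(M-\theta I_N)$ over the splitting field of $P$, then uses multiplicativity of the determinant and $\det(M-\theta I_N)=(-1)^N C(\theta)$. It needs only the roots of $P$, never the eigenvalues of $M$, so the step $\det P(M)=\prod_j P(\beta_j)$ becomes unnecessary. The paper's eigenvalue route, by contrast, shows directly how the statement matches the convention $\operatorname{Res}_A(C,P)=\prod_j P(\beta_j)$ used later.
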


\begin{proof}
Write $C(A)=\prod_{i=1}^N(A-\lambda_i)$ over an algebraic closure. Then
\[
\prod_{\theta}C(\theta)
=\prod_{\theta}\prod_i(\theta-\lambda_i)
=(-1)^{N\deg P}\prod_i\prod_{\theta}(\lambda_i-\theta)
=(-1)^{N\deg P}\prod_iP(\lambda_i).
\]
The last product is $\det P(M)$.
\end{proof}

\begin{proposition}[Spectral resultant module]\label{prop:resultant-module}
Let $\mathcal O$ be the valuation ring of a finite extension of
$\mathbb Q_p$, and put
\[
\Lambda_{\mathcal O,d}=\mathcal O[[T_1,\ldots,T_d]].
\]
Assume that
\[
P(A)\in\mathcal O[A]
\]
is monic and that
\[
\Fcal_X(A,T)\in\Lambda_{\mathcal O,d}[A].
\]
Let
\[
N_{X,P}=\Lambda_{\mathcal O,d}[A]/(P(A)).
\]
Multiplication by $\Fcal_X(A,T)$ defines a
$\Lambda_{\mathcal O,d}$-linear endomorphism
\[
m_{\Fcal_X}:N_{X,P}\longrightarrow N_{X,P}.
\]
Set
\[
M_{X,P}=\operatorname{coker}(m_{\Fcal_X}).
\]
Then
\[
\Fitt_0^{\Lambda_{\mathcal O,d}}(M_{X,P})=(\Rcal_{X,P}(T)).
\]
In particular, if $\Rcal_{X,P}(T)\ne0$, then $M_{X,P}$ is a finitely
generated torsion $\Lambda_{\mathcal O,d}$-module.
\end{proposition}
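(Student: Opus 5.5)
Since $P$ is monic of degree $k=\deg P$, the quotient $N_{X,P}=\Lambda_{\mathcal O,d}[A]/(P(A))$ is a free $\Lambda_{\mathcal O,d}$-module with basis $1,A,\ldots,A^{k-1}$. With respect to this basis, $m_{\Fcal_X}$ is a $k\times k$ matrix $M_F$ over $\Lambda_{\mathcal O,d}$, and $M_{X,P}$ has the finite presentation
\[
\Lambda_{\mathcal O,d}^k\xrightarrow{\,M_F\,}\Lambda_{\mathcal O,d}^k\to M_{X,P}\to0 .
\]
By the definition of Fitting ideals, a square presentation gives $\Fitt_0(M_{X,P})=(\det M_F)$. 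The proof therefore reduces to the identity
\[
\det M_F=\Res_A(\Fcal_X(A,T),P(A))=\Rcal_{X,P}(T).
\]

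This is the classical fact that the norm of $g$ in $R[A]/(P)$ equals $\prod_{P(\theta)=0}g(\theta)$. Here $R$ is any commutative ring and $P$ is monic.

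I would prove it by a universality argument. Both sides are polynomial expressions with integer coefficients in the coefficients of $P$ and of $g=\Fcal_X$. It therefore suffices to treat the generic case: the coefficients of $P$ are independent indeterminates over $\mathbb Z$, adjoined to $R$. In that case $P$ splits with distinct roots over an integral extension of a domain containing $R$, and we can pass to a field where the multiplication matrix diagonalizes. On the eigenbasis given by the idempotents at the roots $\theta_1,\ldots,\theta_k$ (Chinese remainder theorem), multiplication by $g$ acts as $\diag(g(\theta_i))$. Hence $\det M_F=\prod_i g(\theta_i)$.

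The paper's resultant convention is $\Res_A(f,g)=\prod_{f(\alpha)=0}g(\alpha)$ with $f$ the first argument. So $\Rcal_{X,P}=\Res_A(\Fcal_X,P)=\prod_{\Fcal_X(\alpha)=0}P(\alpha)$, where the product runs over the $2m$ roots of $\Fcal_X$. What we computed is $\prod_{P(\theta)=0}\Fcal_X(\theta)$. The sign comparison is exactly as in Lemma~\ref{lem:even-degree}:
\[
\prod_{\theta}\Fcal_X(\theta)=(-1)^{2m\deg P}\prod_{\alpha}P(\alpha)=\prod_\alpha P(\alpha),
\]
and the sign is trivial because $\deg_A\Fcal_X=2m$ is even. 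This identity is a polynomial identity in the coefficients. It can be verified generically, with both polynomials having indeterminate coefficients and split over an extension, and then specialized to $\Lambda_{\mathcal O,d}$. Hence $\det M_F=\Rcal_{X,P}(T)$.

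For the final assertion: $\Lambda_{\mathcal O,d}$ is a domain, and $M_{X,P}$ is finitely generated as a quotient of $\Lambda_{\mathcal O,d}^k$. By the standard fact $\Fitt_0(M)\subseteq\operatorname{Ann}(M)$ (from the adjugate of the presentation matrix), $\Rcal_{X,P}$ annihilates $M_{X,P}$. A nonzero annihilator in a domain means $M_{X,P}$ is torsion.

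The main obstacle is the careful justification of the norm–resultant identity over the non-field base $\Lambda_{\mathcal O,d}$, including the sign bookkeeping. I would handle it by the generic-coefficient reduction described above. Alternatively, one could pass to the fraction field of $\Lambda_{\mathcal O,d}$: the determinant of $M_F$ is unchanged by base change, and there one works in the splitting field of $P$ and diagonalizes when $P$ is separable. The separable case then extends by density or by the generic argument.
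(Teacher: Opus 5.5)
Your proposal is correct and follows essentially the same route as the paper: you take the free basis of $N_{X,P}$, identify $\Fitt_0$ of the cokernel of a square presentation with $(\det M_F)$, identify that determinant with $\Res_A(P,\Fcal_X)$, and remove the sign using $\deg_A\Fcal_X=2m$ even. The only difference is that your generic-coefficient argument actually proves the norm--resultant identity, which the paper simply asserts as standard.
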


\begin{proof}
Since $P(A)$ is monic, $N_{X,P}$ is a finite free
$\Lambda_{\mathcal O,d}$-module of rank $\deg P$.  For the cokernel of an
endomorphism of a finite free module, the zeroth Fitting ideal is generated
by the determinant of a representing matrix.  This determinant is the
determinant of multiplication by $\Fcal_X(A,T)$ on
$\Lambda_{\mathcal O,d}[A]/(P(A))$, hence equals
\[
\Res_A(P,\Fcal_X).
\]
By the resultant convention and the even-degree sign identity,
\[
\Res_A(P,\Fcal_X)=(-1)^{\deg P\cdot \deg_A\Fcal_X}\Res_A(\Fcal_X,P)=\Rcal_{X,P}(T),
\]
because $\deg_A\Fcal_X=2m$ is even.  This proves the Fitting ideal
identity. If $\Rcal_{X,P}\ne0$, the endomorphism has nonzero determinant
over the domain $\Lambda_{\mathcal O,d}$, so its cokernel is torsion.
\end{proof}

\subsection{Iwasawa invariants and Cuoco--Monsky asymptotics}\label{subsec:iw-invariants}

Let $\mathcal O=\mathcal O_K$ be the valuation ring of $K$, let $\varpi$
be a uniformizer, let $k=\mathcal O/\varpi\mathcal O$ be the residue
field, and let $e(K/\mathbb Q_p)$ be the ramification index. Throughout,
for an element with coefficients in $\mathcal O_K$, an overline denotes
its reduction modulo the maximal ideal.
Let
\[
  F\in K[[T_1,\ldots,T_d]]
\]
be nonzero. We say that $F$ is admissible if there exist an integer $N(F)$
and an element $F_0\in\mathcal O[[T_1,\ldots,T_d]]$ such that
\[
  F=\varpi^{N(F)}F_0,
  \qquad \varpi\nmid F_0.
\]
Then we define the \(\mu\)-invariant of \(F\) by
\[
  \mu(F)=\frac{N(F)}{e(K/\mathbb Q_p)}.
\]
Let $\overline F_0$ be the reduction of $F_0$ in
$k[[T_1,\ldots,T_d]]$. The $\lambda$-invariant of $F$ is
\[
  \lambda(F)=\sum_{\mathfrak p}\ord_{\mathfrak p}(\overline F_0),
\]
where the sum is over the distinct height-one prime ideals
\[
\mathfrak p=(\sigma-1),\qquad \sigma\in\Gamma\setminus\Gamma^p,
\]
which divide $\overline F_0$. Since $k[[T_1,\ldots,T_d]]$ is a unique
factorization domain and $\overline F_0\ne0$, only finitely many such
primes contribute.

For a nonzero element $\overline F\in k[[T_1,\ldots,T_d]]$, we also
write
\[
  \lambda_k(\overline F)=
  \sum_{\mathfrak p}\ord_{\mathfrak p}(\overline F),
\]
where the sum is over the same height-one prime ideals
$\mathfrak p=(\sigma-1)$ which divide $\overline F$. We use this
auxiliary notation in the proof of the equivariant Kida formula. If
$F\in\mathcal O[[T_1,\ldots,T_d]]$ and $\mu(F)=0$, then
$\lambda(F)=\lambda_k(\overline F)$.

\begin{lemma}[Finite coefficient extensions and coordinates]\label{lem:scalar-extension}
Let $K'/K$ be a finite extension. If an admissible nonzero series
$F\in K[[T_1,\ldots,T_d]]$ is viewed as an element of
$K'[[T_1,\ldots,T_d]]$, then its normalized $\mu$-invariant and its
$\lambda$-invariant are unchanged. The same is true after a
$\operatorname{GL}_d(\mathbb Z_p)$-change of coordinates on the Iwasawa
algebra.
\end{lemma}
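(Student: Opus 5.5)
The statement has two parts, and I will treat them separately. The first is invariance of $\mu$ and $\lambda$ under a finite coefficient extension $K'/K$. The second is invariance under a $\operatorname{GL}_d(\mathbb Z_p)$-change of coordinates. In both cases I would work directly from the definitions in terms of the decomposition $F=\varpi^{N(F)}F_0$ and the reduction $\overline F_0$.

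\textbf{Coefficient extension.} Let $\varpi'$ be a uniformizer of $K'$, and write $e'=e(K'/K)$, so that $\varpi=u(\varpi')^{e'}$ with $u\in\mathcal O_{K'}^\times$. Then
\[
F=(\varpi')^{e'N(F)}\,uF_0 .
\]
Since $\varpi\nmid F_0$, some coefficient of $F_0$ is a unit in $\mathcal O$, hence a unit in $\mathcal O_{K'}$. Therefore $\varpi'\nmid uF_0$, and the $K'$-version of $N(F)$ is $e'N(F)$. Using $e(K'/\mathbb Q_p)=e'\,e(K/\mathbb Q_p)$, the normalized $\mu$ is unchanged.

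For $\lambda$, the new reduction is $\bar u\,\overline F_0$ in $k'[[T]]$, where $k'/k$ is the finite residue extension and $\bar u$ is a nonzero constant. The key point is that $\ord_{(\sigma-1)}$ does not change under the flat extension $k[[T]]\to k'[[T]]$. The element $\sigma-1=\tau(\sigma)-1$ has coefficients in $\mathbb F_p$ and is prime in $k'[[T]]$ for any finite field $k'$. Indeed, for $\sigma\notin\Gamma^p$ we may complete $\sigma$ to a basis of $\Gamma$. After the induced change of variables, $\sigma-1$ becomes a coordinate variable $T_1'$, and $k'[[T]]/(T_1')$ is a domain.

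It remains to check that $\ord_{\mathfrak p}$ is preserved. If $(\sigma-1)^r$ divides $\overline F_0$ exactly in $k[[T]]$, write $\overline F_0=(\sigma-1)^rG$ with $(\sigma-1)\nmid G$. In the coordinates where $\sigma-1=T_1'$, this means $G|_{T_1'=0}\neq0$, and that condition persists over $k'$. The set of primes indexing the sum is also the same set $\{(\sigma-1)\}$ in both rings. Distinct $\sigma$ modulo the relation of generating the same ideal stay distinct, because the ideal is determined over $\mathbb F_p$. So $\lambda$ is unchanged.

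\textbf{Change of coordinates.} Let $g\in\operatorname{GL}_d(\mathbb Z_p)$. It acts on $\Gamma$ and induces a continuous $\mathcal O$-algebra automorphism $\phi_g$ of $\mathcal O[[T]]$ via $\tau(\gamma)\mapsto\tau(g\gamma)$, concretely $1+T_i\mapsto\tau(ge_i)$. This automorphism preserves the ideal $\varpi\mathcal O[[T]]$, so $N(\phi_gF)=N(F)$ and $\mu$ is unchanged. On the reduction, $\phi_g$ induces an automorphism of $k[[T]]$ that sends $(\sigma-1)$ to $(g\sigma-1)$. It therefore permutes the set of height-one primes $\{(\sigma-1):\sigma\in\Gamma\setminus\Gamma^p\}$, since $g$ preserves $\Gamma\setminus\Gamma^p$. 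Ring automorphisms preserve $\ord$, so the sum defining $\lambda$ is preserved.

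\textbf{Main obstacle.} The hardest step is making rigorous that $\sigma-1$ stays prime after the residue field extension and that exact multiplicities do not change. I would handle both by the coordinate reduction used above: move $\sigma-1$ to a variable $T_1'$ and compare orders of vanishing along $T_1'=0$.
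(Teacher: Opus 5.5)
Your proposal is correct and follows the paper's argument: for $\mu$, both the $\varpi'$-exponent and the absolute ramification index scale by $e(K'/K)$; for $\lambda$, each $\sigma-1$ is moved to a coordinate variable by a $\operatorname{GL}_d(\mathbb Z_p)$ substitution, and one uses that $k'[[T_1,\ldots,T_d]]/(T_1)$ is a domain. You go slightly beyond the paper by explicitly proving the coordinate-change clause and checking that distinct primes $(\sigma-1)$ remain distinct over $k'$; the only slip is that $\varpi^{N(F)}=u^{N(F)}(\varpi')^{e'N(F)}$, so the unit in your display should be $u^{N(F)}$ rather than $u$, which affects nothing.
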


\begin{proof}
Let $\varpi$ and $\varpi'$ be uniformizers of $K$ and $K'$, and let
$e(K'/K)$ be the relative ramification index. If
$F=\varpi^N F_0$ is the admissible factorization over $K$, then over
$K'$ the exponent of $\varpi'$ is multiplied by $e(K'/K)$, while the
absolute ramification index is multiplied by the same factor. Hence the
normalized value $N/e(K/\mathbb Q_p)$ is unchanged.

For $\lambda$, reduction modulo the maximal ideal replaces the residue
field $k$ by a finite extension $k'$. If
$\mathfrak p=(\sigma-1)$ with $\sigma\in\Gamma\setminus\Gamma^p$, then a
$\operatorname{GL}_d(\mathbb Z_p)$-change of variables sends $\sigma-1$
to $T_1$ up to a unit. Thus the corresponding order is the $T_1$-adic
order of the reduced series. This order is unchanged after extending the
coefficient field from $k$ to $k'$, because
$k'[[T_1,\ldots,T_d]]/(T_1)\simeq k'[[T_2,\ldots,T_d]]$ is an integral
domain. Thus each summand in the divisor-sum definition of $\lambda$ is
unchanged, and hence the whole $\lambda$-invariant is unchanged.
\end{proof}

We use the following leading form of the Cuoco--Monsky asymptotic theorem.
The main growth formula below uses this leading form.  Sharper lower-order
expansions, such as those used in \cite[Theorem~3.5]{AMT2026}, may be
substituted when available.

\begin{theorem}[Cuoco--Monsky, leading form]\label{thm:CM}
Let $F\in\Lambda_{K,d}$ be nonzero and admissible. Suppose that
\[
F(\zeta-1)\ne0
\qquad
(\zeta\in W^d\setminus\{(1,\ldots,1)\}).
\]
Then, as $n\to\infty$,
\[
\sum_{\zeta\in W_n^d\setminus\{(1,\ldots,1)\}}
 v_p(F(\zeta-1))
=
\mu(F)p^{dn}+\lambda(F)n p^{(d-1)n}+O(p^{(d-1)n}).
\]
When $d=1$, the error term is eventually constant: there exists
$\nu\in\mathbb Q$ such that, for all sufficiently large $n$,
\[
\sum_{\zeta\in W_n\setminus\{1\}}
 v_p(F(\zeta-1))
=
\mu(F)p^n+\lambda(F)n+\nu.
\]
\end{theorem}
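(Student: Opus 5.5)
The strategy is to reduce to the classical Cuoco--Monsky theorem for $\mathcal O[[T_1,\ldots,T_d]]$, or more generally for $\mathbb Z_p[[\mathbb Z_p^d]]$, as in \cite{CM81,Mon81}. The first step is normalization. Since $F$ is admissible, write $F=\varpi^{N(F)}F_0$ with $F_0\in\mathcal O[[T_1,\ldots,T_d]]$ and $\varpi\nmid F_0$. Then
\[
v_p(F(\zeta-1))=\frac{N(F)}{e(K/\mathbb Q_p)}+v_p(F_0(\zeta-1))=\mu(F)+v_p(F_0(\zeta-1)).
\]
Summing over the $p^{dn}-1$ nontrivial points of $W_n^d$ gives $\mu(F)p^{dn}-\mu(F)$. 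The constant $-\mu(F)$ is absorbed into the error term $O(p^{(d-1)n})$, and when $d=1$ into the constant $\nu$. This reduces the statement to the case $\mu=0$, that is, to $F_0$ with nonzero reduction $\overline F_0$.

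Next I change scalars. By Lemma~\ref{lem:scalar-extension}, $\lambda$ is unchanged under finite extension of $K$ and under $\operatorname{GL}_d(\mathbb Z_p)$-coordinate changes. The values $F_0(\zeta-1)$ lie in $\mathcal O[\zeta]$ and are unaffected by the extension. It therefore suffices to invoke the Cuoco--Monsky theorem for $\mathcal O[[\Gamma]]$ with $\mathcal O$ the ring of integers of a finite extension of $\mathbb Q_p$. Monsky's extension \cite{Mon81} handles general $\mathcal O$. Alternatively, one can take the norm $\mathrm{N}_{K/\mathbb Q_p}F_0\in\mathbb Z_p[[T]]$: valuations of the norm at $\zeta-1$ equal the sum of the Galois-conjugate valuations, and the Galois conjugates permute $W_n^d$. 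Along this route one checks that the $\mu$- and $\lambda$-invariants scale by $[K:\mathbb Q_p]$ compatibly with the normalization $1/e$.

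The key input is the Cuoco--Monsky statement itself. For $\mu(F_0)=0$, the sum $\sum_{\zeta\ne1}v_p(F_0(\zeta-1))$ is governed by the divisors of $\overline F_0$ of the form $(\sigma-1)$. Each such prime $\mathfrak p=(\sigma-1)$ contributes $\ord_{\mathfrak p}(\overline F_0)\cdot np^{(d-1)n}$ up to $O(p^{(d-1)n})$. Concretely, after a coordinate change sending $\sigma-1$ to $T_1$, the characters trivial on $\sigma$ form a copy of $W_n^{d-1}$, and at those points $v_p(\zeta_1-1)$ accumulates the factor $n$. All other factors of $\overline F_0$ contribute only $O(p^{(d-1)n})$. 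This matches the divisor-sum definition of $\lambda(F)$ used in the paper.

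For $d=1$, I will use Weierstrass preparation instead: $F_0=u\cdot g$ with $g$ distinguished of degree $\lambda$. For large $n$, $v_p(\zeta-1)<v_p$ of every root of $g$ when $\zeta$ is primitive of order $p^n$, so $v_p(F_0(\zeta-1))=\lambda/(p^{n-1}(p-1))$. Summing over the $p^{n-1}(p-1)$ primitive $p^n$-th roots of unity contributes exactly $\lambda$ per level, which yields $\lambda n+\nu$. The nonvanishing hypothesis ensures that every term is finite. The main obstacle is the multivariable identification of the $np^{(d-1)n}$ coefficient with the divisor-sum $\lambda$ for general $\mathcal O$. Rather than reprove it, I will cite \cite{CM81,Mon81} and only verify that their normalization of $\lambda$ coincides with ours.
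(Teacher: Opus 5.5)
Your proposal agrees with how the paper handles this statement: the paper does not prove Theorem~\ref{thm:CM} but quotes it from \cite{CM81,Mon81}, and for $d\ge 2$ your argument likewise rests on that citation. Your added reductions are correct and make explicit the normalization the paper leaves implicit: splitting off $\varpi^{N(F)}$ to produce the $\mu(F)p^{dn}$ term, descending from $\mathcal O$ to $\mathbb Z_p$ via the norm (with sums, $\mu$ and $\lambda$ all scaling by $[K:\mathbb Q_p]$), and using Weierstrass preparation to get the exact formula $\lambda n+\nu$ for $d=1$.
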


\begin{corollary}[Including the trivial character]\label{cor:CM-trivial}
Let $F\in\Lambda_{K,d}$ be nonzero and admissible. Assume that
\[
  F(\zeta-1)\ne0\qquad(\zeta\in W^d).
\]
Then, as $n\to\infty$,
\[
\sum_{\zeta\in W_n^d}v_p(F(\zeta-1))
=
\mu(F)p^{dn}+\lambda(F)n p^{(d-1)n}+O(p^{(d-1)n}).
\]
When $d=1$, there exists $\nu\in\mathbb Q$ such that, for all sufficiently
large $n$,
\[
\sum_{\zeta\in W_n}v_p(F(\zeta-1))
=
\mu(F)p^n+\lambda(F)n+\nu.
\]
\end{corollary}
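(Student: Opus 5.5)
The corollary follows from Theorem~\ref{thm:CM} once the trivial character is split off from the sum. Since $(1,\ldots,1)\in W_n^d$ for every $n\ge0$, I will write
\[
\sum_{\zeta\in W_n^d}v_p(F(\zeta-1))
=v_p(F(0))+\sum_{\zeta\in W_n^d\setminus\{(1,\ldots,1)\}}v_p(F(\zeta-1)).
\]
Here $F(0)$ is the constant term of $F$, which is the value at $\zeta=(1,\ldots,1)$, since then $\zeta-1=0$.

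Two checks are needed.

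\emph{First term.} The hypothesis applied at $\zeta=(1,\ldots,1)$ gives $F(0)\ne0$. Hence $c:=v_p(F(0))$ is a well-defined rational number, independent of $n$. It is rational because $F(0)\in K$ and $v_p$ takes values in $\frac{1}{e(K/\mathbb Q_p)}\mathbb Z$ on $K^\times$.

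\emph{Second term.} The hypothesis of the corollary contains the hypothesis of Theorem~\ref{thm:CM}, namely $F(\zeta-1)\ne0$ for all nontrivial $\zeta\in W^d$. Together with the assumption that $F$ is nonzero and admissible, this lets us apply Theorem~\ref{thm:CM} to the second sum, giving $\mu(F)p^{dn}+\lambda(F)np^{(d-1)n}+O(p^{(d-1)n})$.

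It remains to absorb the constant $c$. For $d\ge1$ we have $p^{(d-1)n}\ge1$, so $c=O(p^{(d-1)n})$ and it disappears into the error term. This gives the general asymptotic.

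For $d=1$, Theorem~\ref{thm:CM} supplies a rational $\nu_0$ such that, for all sufficiently large $n$,
\[
\sum_{\zeta\in W_n\setminus\{1\}}v_p(F(\zeta-1))=\mu(F)p^n+\lambda(F)n+\nu_0 .
\]
Setting $\nu=\nu_0+c\in\mathbb Q$ then gives the stated exact formula for all sufficiently large $n$.

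There is no real obstacle. The only points needing care are that the trivial character belongs to $W_n^d$ at every level, so the same single term is split off for each $n$, and that nonvanishing at the trivial character is exactly what makes that term finite.
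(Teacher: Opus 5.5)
Your proposal is correct and matches the paper's proof: split off the trivial-character term $v_p(F(0))$, apply Theorem~\ref{thm:CM} to the sum over the nontrivial characters, and absorb the $n$-independent constant into the $O(p^{(d-1)n})$ error, or into $\nu$ when $d=1$. Your extra checks are harmless additions to the same argument: that $F(0)\ne0$ follows from the hypothesis at $\zeta=(1,\ldots,1)$, and that $(1,\ldots,1)\in W_n^d$ for every $n$.
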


\begin{proof}
The preceding theorem applies to the sum over
$W_n^d\setminus\{(1,\ldots,1)\}$. The omitted term is $v_p(F(0))$, which
is independent of $n$ and is absorbed into the $O(p^{(d-1)n})$ term.  In
the case $d=1$, it changes only the eventual constant.
\end{proof}

\section{Universal Spectral Factorization}\label{sec:universal-factorization}

In this section we prove the factorization of the whole characteristic
polynomial of the Grover walk on the finite layers of the tower.  We first
record the determinant identity for Grover walks that underlies this
factorization.

For $\psi\in\widehat{\Gamma_n}$, set
\[
W_\psi=
\left(
\sum_{e:v_i\to v_j} w(e)\psi(\alpha_n(e))
\right)_{i,j}.
\]
We define the twisted weighted Ihara determinant factor attached to $X$ by
\[
h_X(\psi,t,\alpha_n)=
\det\left(I_m-tW_\psi+t^2(D^W(X)-I_m)\right).
\]

\begin{lemma}[Weighted Ihara--Grover determinant identity]\label{lem:weighted-ihara-grover}
For every $n\ge0$, one has
\[
\det(I_{2lq_n}-tU_n)
=
(1-t^2)^{-q_n\chi(X)}
\prod_{\psi\in\widehat{\Gamma_n}} h_X(\psi,t,\alpha_n).
\]
The identity is first obtained after adjoining the values of the finite
characters \(\psi\), and the full product is Galois invariant; hence it
belongs to \(K(t)\).
\end{lemma}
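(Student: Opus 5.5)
The plan is to combine two standard ingredients: the weighted Ihara determinant identity on a single finite graph, and the decomposition of the derived graph $X_n$ by characters of the abelian voltage group $\Gamma_n$.

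\emph{Step 1: the determinant identity on $X_n$.} Apply the weighted (Grover) version of the Ihara--Bass three-term formula to the finite graph $X_n$ itself. The Grover matrix $U_n$ has the form $U_n=S(2D^{-1}C-J)$-type on arcs, where $S$ is the arc-reversal involution. Write $U_n=B-J$, with $B$ the arc-transfer matrix weighted by $w$ and $J$ the reversal. The factorization $B=\mathcal{T}\mathcal{S}$ through the vertex space (a $2lq_n\times mq_n$ matrix times an $mq_n\times 2lq_n$ matrix) lets us run the Bass--Sato block-matrix computation: take the determinant of
\[
\begin{pmatrix} I & \mathcal{T}\\ t\mathcal{S} & I\end{pmatrix}
\]
in two ways, and use $\det(I+tJ)=(1-t^2)^{lq_n}$. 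This gives
\[
\det(I_{2lq_n}-tU_n)=(1-t^2)^{lq_n-mq_n}\det\bigl(I_{mq_n}-tW(X_n)+t^2(D^W(X_n)-I)\bigr),
\]
with $lq_n-mq_n=-q_n\chi(X)$. This is exactly the Mizuno--Sato/Sato weighted Ihara formula, and I would cite \cite{Sat07,MS04} or \cite[Theorem~2.16]{AMT2026} rather than redo it. The point to check carefully is that the Grover weight $w(e)=2/d_{o(e)}$ is pulled back to $X_n$. It is, since $X_n\to X$ is a covering and so preserves degrees. Hence $D^W(X_n)=2I$.

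\emph{Step 2: block-diagonalize the vertex matrix over $\widehat{\Gamma_n}$.} Index vertices of $X_n$ by $V(X)\times\Gamma_n$. Then $W(X_n)=\sum_{e}w(e)E_{o(e),t(e)}\otimes R(\alpha_n(e))$, where $R$ is the regular representation of $\Gamma_n$ by translations. Over $\overline{\Qp}$, which contains $\mu_{p^n}$, the regular representation of the abelian group $\Gamma_n$ splits as $\bigoplus_{\psi}\psi$. Since $p\nmid$ nothing here matters, this is just the discrete Fourier transform with basis vectors $\sum_g\psi(g)^{-1}\delta_g$. Conjugating by this change of basis turns $I_m\otimes I-tW(X_n)+t^2 I\otimes I$ into $\bigoplus_\psi\bigl(I_m-tW_\psi+t^2(D^W(X)-I_m)\bigr)$. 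One convention must be fixed consistently, namely whether $\psi$ or $\bar\psi$ appears. Since the product runs over all of $\widehat{\Gamma_n}$, either convention gives the same product. Taking determinants yields $\prod_\psi h_X(\psi,t,\alpha_n)$.

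\emph{Step 3: rationality.} Each $h_X(\psi,t,\alpha_n)$ lies in $K(\mu_{p^n})[t]$. The group $\mathrm{Gal}(K(\mu_{p^n})/K)$ acts on $\widehat{\Gamma_n}$ by $\psi\mapsto\sigma\circ\psi$, which permutes the factors. Hence the product is Galois-invariant, so it lies in $K[t]$. This also follows directly from its equality with $\det(I_{2lq_n}-tU_n)(1-t^2)^{q_n\chi(X)}$, which has rational coefficients.

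The main obstacle is Step 1: verifying that the Grover matrix, with its sign convention $2/d-\delta$ and its reversal structure, matches the weighted Ihara three-term determinant with $D^W-I$. The key identity to establish is that the arc-to-vertex factorization reproduces $W$ and $D^W$ correctly. Once that is done, Steps 2 and 3 are routine.
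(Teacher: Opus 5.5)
Your proposal is correct and follows essentially the paper's route: the Konno--Sato/Mizuno--Sato three-term identity for the Grover matrix on $X_n$ with the pulled-back weight (so $D^W(X_n)=2I$), then the finite Fourier transform on the group coordinate producing the blocks $W_\psi$ (the $\psi$ versus $\bar\psi$ ambiguity is absorbed by reindexing), then Galois descent. The only difference is the order: you block-diagonalize the $mq_n\times mq_n$ vertex determinant after applying the three-term formula on $X_n$, whereas the paper first factors $\zeta(X_n,t)^{-1}$ into the $L(X,t,\psi,\alpha_n)^{-1}$ and applies the three-term formula to each; also, if you write out the Bass--Sato step, the arc block of your block matrix should be $I+tJ$ rather than $I$, so that its Schur complement is $I-tU_n$.
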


\begin{proof}
Equip all finite layers with the pulled-back Grover weight
$w(e)=2/d_{o(e)}$. Here $\zeta(X_n,t)$ denotes the Mizuno--Sato weighted
zeta function associated with this pulled-back weight, and
$L(X,t,\psi,\alpha_n)$ denotes the corresponding weighted Artin--Ihara
$L$-function. By the Konno--Sato determinant relation \cite{KS12}, the
Grover characteristic determinant is identified with the weighted Ihara
zeta function:
\[
\det(I_{2lq_n}-tU_n)=\zeta(X_n,t)^{-1}.
\]
Since \(\Gamma_n\) is finite abelian, its regular representation decomposes
as the direct sum of the characters in \(\widehat{\Gamma_n}\). The
corresponding change of basis is the finite Fourier transform. It is
constant, independent of $t$, and it block-diagonalizes the weighted
adjacency matrix of $X_n$ into the blocks $W_\psi$.
For the abelian voltage cover $X_n/X$ with group $\Gamma_n$, the Artin
factorization for Mizuno--Sato weighted $L$-functions and the three-term
determinant formula \cite{MS04,Sat07,AMT2026} give
\[
\zeta(X_n,t)^{-1}
=
\prod_{\psi\in\widehat{\Gamma_n}} L(X,t,\psi,\alpha_n)^{-1}
=
(1-t^2)^{-q_n\chi(X)}
\prod_{\psi\in\widehat{\Gamma_n}} h_X(\psi,t,\alpha_n),
\]
where we use \(\#\widehat{\Gamma_n}=\#\Gamma_n=q_n\).
The preceding display is obtained over a finite extension of $K$ containing
the values of all characters of $\Gamma_n$. Since the product is taken over
all such characters, it is invariant under the corresponding Galois action
on these values and hence descends to $K(t)$. This proves the identity.
\end{proof}

\begin{lemma}\label{lem:h-specialization}
For every $\psi\in\widehat{\Gamma_n}$,
\[
A^{2m}h_X(\psi,A^{-1},\alpha_n)=\Fcal_X(A,\zeta_\psi-1).
\]
\end{lemma}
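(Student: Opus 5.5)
The plan is to compute the left-hand side directly from the definition of $h_X$ and compare it, matrix entry by matrix entry, with the specialization of the universal matrix. There are two ingredients. The first is the Grover simplification $D^W(X)=2I_m$, so that
\[
h_X(\psi,t,\alpha_n)=\det\bigl(I_m-tW_\psi+t^2I_m\bigr)=\det\bigl((1+t^2)I_m-tW_\psi\bigr).
\]
The second is the specialization identity $W_{\tau,\alpha}(\zeta_\psi-1)=W_\psi$ recorded in the introduction.

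The first step is to substitute $t=A^{-1}$ and pull the factor $A^{-2}$ out of each of the $m$ rows. This uses only multilinearity of the determinant in the rows:
\[
A^{2m}\det\bigl((1+A^{-2})I_m-A^{-1}W_\psi\bigr)=\det\bigl(A^2\bigl((1+A^{-2})I_m-A^{-1}W_\psi\bigr)\bigr)=\det\bigl((A^2+1)I_m-AW_\psi\bigr).
\]
This is an identity in $\overline{\Qp}(A)$. Since the right-hand side is a polynomial, it is also an identity in $\overline{\Qp}[A]$.

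The second step is to identify $(A^2+1)I_m-AW_\psi$ with the specialization at $T=\zeta_\psi-1$ of $(A^2+1)I_m-AW_{\tau,\alpha}$. Since the determinant is a polynomial in the entries, specialization commutes with taking the determinant. This gives $\Fcal_X(A,\zeta_\psi-1)$, using the Grover form $\Fcal_X(A,T)=\det((A^2+1)I_m-AW_{\tau,\alpha})$.

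The only point needing care, and the main obstacle, is the specialization step itself. One has to check that evaluating the power series $\tau(\gamma)=\prod_i(1+T_i)^{\gamma_i}$ at $T_i=\zeta_i-1$ converges and gives $\prod_i\zeta_i^{\gamma_i}=\psi(\alpha_n(e))$.

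Convergence holds because $v_p(\zeta_i-1)>0$ and the binomial coefficients $\binom{\gamma_i}{k}$ lie in $\mathbb Z_p$. The binomial series then defines a continuous function of $\gamma_i\in\mathbb Z_p$. For integer $\gamma_i$ it equals $\zeta_i^{\gamma_i}$. By continuity, $\zeta_i^{\gamma_i}$ depends only on $\gamma_i$ mod $p^n$, since $\zeta_i^{p^n}=1$. Hence the value agrees with $\psi$ evaluated on the image of $\alpha(e)$ in $\Gamma_n$, which is $\psi(\alpha_n(e))$.

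Finally, the entries of $W_{\tau,\alpha}$ are finite sums of such terms with weights $w(e)$. So entrywise specialization yields $W_\psi$, which completes the comparison.
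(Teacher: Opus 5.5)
Your proof is correct and follows the paper's route. You substitute $t=A^{-1}$, absorb the factor $A^{2m}$ into the $m\times m$ determinant, and identify the result with the specialization of the universal polynomial via $W_{\tau,\alpha}(\zeta_\psi-1)=W_\psi$. The differences are cosmetic: the paper keeps the general term $D^W(X)-I_m$ instead of first using $D^W(X)=2I_m$, and it cites the specialization identity from the introduction, whereas your convergence-and-continuity check for the binomial series correctly justifies that step.
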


\begin{proof}
Substituting $t=A^{-1}$ gives
\[
h_X(\psi,A^{-1},\alpha_n)
=A^{-2m}\det\left(A^2I_m-AW_\psi+D^W(X)-I_m\right).
\]
Since $W_\psi=W_{\tau,\alpha}(\zeta_\psi-1)$, the determinant on the right
is $\Fcal_X(A,\zeta_\psi-1)$.
\end{proof}

\begin{theorem}[Universal spectral factorization]\label{thm:universal-factorization}
For every $n\ge0$, one has
\[
\det(AI_{2lq_n}-U_n)
=
(A^2-1)^{-q_n\chi(X)}
\prod_{\zeta\in W_n^d}\Fcal_X(A,\zeta-1).
\]
\end{theorem}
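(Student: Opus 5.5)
The plan is to substitute $t=A^{-1}$ into the weighted Ihara--Grover determinant identity of Lemma~\ref{lem:weighted-ihara-grover}. We then clear denominators by multiplying through by $A^{2lq_n}$, and rewrite each local factor via Lemma~\ref{lem:h-specialization}. Everything happens in the rational function field $\overline{\Qp}(A)$ (or $K'(A)$ for a finite extension $K'$ containing $W_n$). The identity of Lemma~\ref{lem:weighted-ihara-grover} is an identity of rational functions in $t$, so the substitution $t=A^{-1}$ is legitimate.

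\textbf{Step 1 (left-hand side).} Since $U_n$ has size $2lq_n$,
\[
\det(AI_{2lq_n}-U_n)=A^{2lq_n}\det(I_{2lq_n}-A^{-1}U_n).
\]

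\textbf{Step 2 (Bass factor).} The factor $(1-t^2)^{-q_n\chi(X)}$ becomes
\[
(1-A^{-2})^{-q_n\chi(X)}=A^{2q_n\chi(X)}(A^2-1)^{-q_n\chi(X)}.
\]

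\textbf{Step 3 (character factors).} By Lemma~\ref{lem:h-specialization}, each factor satisfies
\[
h_X(\psi,A^{-1},\alpha_n)=A^{-2m}\Fcal_X(A,\zeta_\psi-1).
\]
The product over $\psi\in\widehat{\Gamma_n}$ therefore contributes $A^{-2mq_n}\prod_{\psi}\Fcal_X(A,\zeta_\psi-1)$. Reindex this product via the bijection $\psi\leftrightarrow\zeta_\psi$ between $\widehat{\Gamma_n}$ and $W_n^d$ fixed in Section~\ref{sec:preliminaries}.

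\textbf{Step 4 (collecting powers of $A$).} The total power of $A$ on the right is $2q_n\chi(X)-2mq_n=2q_n(m-l)-2mq_n=-2lq_n$, using $\chi(X)=m-l$. Multiplying by $A^{2lq_n}$ from Step 1 cancels it exactly, which gives the claimed formula.

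The only point needing care is the bookkeeping of the exponents of $A$, which is where $\chi(X)=m-l$ enters. We should also note that Lemma~\ref{lem:weighted-ihara-grover} was stated after adjoining character values, so the resulting identity holds in $\overline{\Qp}(A)$. Since the left side lies in $K[A]$, the right side is in fact a polynomial, as remarked in the introduction. No step looks like a genuine obstacle; the main risk is a sign or exponent slip in Step 2.
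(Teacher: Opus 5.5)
Your proposal is correct and follows the paper's proof step for step. Substitute $t=A^{-1}$ into Lemma~\ref{lem:weighted-ihara-grover}, rewrite $1-A^{-2}=A^{-2}(A^2-1)$, apply Lemma~\ref{lem:h-specialization} to each character factor, and observe that the net exponent $2lq_n+2q_n\chi(X)-2mq_n$ vanishes because $\chi(X)=m-l$. Your closing remark (the identity first holds over a field containing $W_n$, and the left side lies in $K[A]$) matches the paper's Galois-descent remark.
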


\begin{proof}
By Lemma~\ref{lem:weighted-ihara-grover},
\[
\det(I_{2lq_n}-tU_n)
=
(1-t^2)^{-q_n\chi(X)}
\prod_{\psi\in\widehat{\Gamma_n}}
 h_X(\psi,t,\alpha_n).
\]
Put $A=t^{-1}$.
\[
\det(AI_{2lq_n}-U_n)=
A^{2lq_n}\det(I_{2lq_n}-A^{-1}U_n).
\]
Thus
\[
\det(AI_{2lq_n}-U_n)
=
A^{2lq_n}(1-A^{-2})^{-q_n\chi(X)}
\prod_{\psi\in\widehat{\Gamma_n}}h_X(\psi,A^{-1},\alpha_n).
\]
Since
\[
1-A^{-2}=A^{-2}(A^2-1),
\]
and by Lemma~\ref{lem:h-specialization}, the right-hand side equals
\[
A^{2lq_n+2q_n\chi(X)-2mq_n}
(A^2-1)^{-q_n\chi(X)}
\prod_{\psi\in\widehat{\Gamma_n}}\Fcal_X(A,\zeta_\psi-1).
\]
Because $\chi(X)=m-l$, the exponent of $A$ is zero. 
Identifying
$\widehat{\Gamma_n}$ with $W_n^d$ gives the formula.
The equality is first obtained over a finite extension of $K$ containing
$W_n$. Since the product is taken over all elements of $W_n^d$, it is
invariant under the Galois action and hence descends to $K(A)$. Since the
left-hand side is the characteristic polynomial of $U_n$, the right-hand
side belongs to $K[A]$.
\end{proof}

\begin{corollary}[Multiplicity at a point]\label{cor:point-multiplicity}
For any $a\in\overline{\Qp}$,
\[
\ord_{A=a}\det(AI_{2lq_n}-U_n)
=
-q_n\chi(X)\ord_{A=a}(A^2-1)
+
\sum_{\zeta\in W_n^d}\ord_{A=a}\Fcal_X(A,\zeta-1).
\]
For $a=\pm1$, the identity is interpreted after scalar extension to
$\overline{\Qp}(A)$: the pole coming from the Bass factor is canceled by
zeros of the product term, since the left-hand side is a polynomial. In
particular, if $a\ne\pm1$, then the first term is zero.
\end{corollary}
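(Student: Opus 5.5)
The plan is to take orders at $A=a$ on both sides of the factorization in Theorem~\ref{thm:universal-factorization}. We work in $\overline{\Qp}(A)$, and for each $a\in\overline{\Qp}$ we use the discrete valuation $\ord_{A=a}$ on this field. It is multiplicative on products and satisfies $\ord_{A=a}(f^k)=k\,\ord_{A=a}(f)$ for integers $k$, including negative ones. The identity
\[
\det(AI_{2lq_n}-U_n)=(A^2-1)^{-q_n\chi(X)}\prod_{\zeta\in W_n^d}\Fcal_X(A,\zeta-1)
\]
holds in $\overline{\Qp}(A)$, since it was established over a finite extension containing $W_n$. So we apply $\ord_{A=a}$ to both sides. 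This turns the product into a sum and gives exactly the displayed formula:
\[
\ord_{A=a}\det(AI-U_n)=-q_n\chi(X)\,\ord_{A=a}(A^2-1)+\sum_{\zeta}\ord_{A=a}\Fcal_X(A,\zeta-1).
\]

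One point needs checking: every factor must be a nonzero element of $\overline{\Qp}(A)$, so that the valuation is defined. Each $\Fcal_X(A,\zeta-1)$ is monic of degree $2m$ in $A$. This is because the specialization $T\mapsto\zeta-1$ preserves the leading term $A^{2m}$ of $\det((A^2+1)I_m-AW_\psi)$, and these specializations converge because $v_p(\zeta_i-1)>0$. The factor $A^2-1$ is also nonzero. The left-hand side is a monic polynomial of degree $2lq_n$, so its order at $a$ is the usual root multiplicity of $a$ as an eigenvalue of $U_n$.

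For $a=\pm1$, the first term may be positive or negative according to the sign of $\chi(X)$. When it is positive, the Bass factor appears with a negative exponent, i.e.\ as a pole. The valuation identity still holds verbatim in $\overline{\Qp}(A)$. Since the left-hand side is a polynomial, its order is $\ge0$, so the sum of orders of the $\Fcal_X$-factors at $\pm1$ is at least $q_n\chi(X)$. This is the stated cancellation. For $a\ne\pm1$, we have $\ord_{A=a}(A^2-1)=0$, which kills the first term and gives the final assertion.

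I do not expect a real obstacle. The only care needed is to justify working in the field $\overline{\Qp}(A)$, where the rational expression makes sense, rather than in the polynomial ring. It is also worth noting that $\ord_{A=a}$ does not depend on the chosen finite extension over which the factorization was first proved.
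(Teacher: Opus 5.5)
Your proposal is correct and follows the paper's argument: the paper's proof takes $\ord_{A=a}$ of both sides of the universal spectral factorization, and your checks (that each $\Fcal_X(A,\zeta-1)$ is a nonzero monic polynomial, and that the valuation is computed in $\overline{\Qp}(A)$) spell out that one line. One small wording point: the Bass factor is a genuine pole exactly when $\chi(X)>0$, i.e.\ when the first term $-q_n\chi(X)$ is negative; this is the case in which your inequality $\sum_\zeta\ord_{A=\pm1}\Fcal_X(A,\zeta-1)\ge q_n\chi(X)$ has real content.
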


\begin{proof}
Take the order at $A=a$ in Theorem~\ref{thm:universal-factorization}.
\end{proof}

\section{Spectral Resultant Iwasawa-Type Growth Formula}\label{sec:resultant-iwasawa}

In this section we prove an Iwasawa-type growth formula for
$v_p(\det P(U_n))$.  The proof proceeds by evaluating the universal
spectral factorization at the roots of $P$, which keeps the Bass factor
separate from the resultant operation.

\subsection{Root expression and admissibility}\label{subsec:root-admissibility}

\begin{lemma}\label{lem:R-root-expression}
Let $P(A)\in K[A]$ be monic and write
\[
P(A)=\prod_{\theta\in\Theta^{\mathrm{mult}}(P)}(A-\theta)
\]
over a splitting field, with roots counted with multiplicity. Then
\[
\Rcal_{X,P}(T)=\prod_{\theta\in\Theta^{\mathrm{mult}}(P)}\Fcal_X(\theta,T).
\]
In particular, for $P(A)=A-a$,
\[
\Rcal_{X,A-a}(T)=\Fcal_X(a,T).
\]
\end{lemma}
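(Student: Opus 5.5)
The plan is to compute the resultant directly from the root convention of Subsection~\ref{subsec:resultants}, working over the domain $\Lambda_{K,d}$, where $\Fcal_X$ is monic of degree $2m$ in $A$. The definition $\Rcal_{X,P}=\Res_A(\Fcal_X,P)$ has $\Fcal_X$ as the monic first argument. So the convention $\Res_A(f,g)=\prod_i g(\alpha_i)$ expresses $\Rcal_{X,P}$ as a product over the roots of $\Fcal_X$, which live only in an algebraic closure of the fraction field of $\Lambda_{K,d}$. The lemma asks instead for a product over the roots of $P$. The argument therefore has to swap the roles of the two polynomials, and I will do this with the same manipulation used in Lemma~\ref{lem:even-degree}.

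First, fix an algebraic closure $\Omega$ of $\operatorname{Frac}(\Lambda_{L,d})$, where $L$ is a splitting field of $P$ over $K$. In $\Omega[A]$, write
\[
\Fcal_X(A,T)=\prod_{i=1}^{2m}(A-\alpha_i)
\qquad\text{and}\qquad
P(A)=\prod_{\theta}(A-\theta),
\]
where $\theta$ runs over $\Theta^{\mathrm{mult}}(P)$. By the convention,
\[
\Rcal_{X,P}=\prod_i P(\alpha_i)=\prod_i\prod_\theta(\alpha_i-\theta).
\]
Next, reverse each factor. This gives
\[
\Rcal_{X,P}=(-1)^{2m\deg P}\prod_\theta\prod_i(\theta-\alpha_i)=\prod_\theta \Fcal_X(\theta,T),
\]
because $\prod_i(\theta-\alpha_i)$ is exactly $\Fcal_X$ evaluated at $A=\theta$, and the sign is trivial since $2m$ is even. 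This is Lemma~\ref{lem:even-degree} with $N=2m$, applied formally over a field.

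Finally, I need to justify that the identity holds in $\Lambda_{L,d}$ and not only in $\Omega$. The right-hand side $\prod_\theta\Fcal_X(\theta,T)$ lies in $\Lambda_{L,d}$, because each $\theta\in L$ and $\Fcal_X\in\Lambda_{K,d}[A]$. The left-hand side is the resultant, which is a polynomial expression in the coefficients of $\Fcal_X$ and $P$ (a Sylvester-type determinant); hence it lies in $\Lambda_{K,d}$. Both sides agree in the field $\Omega$ containing $\Lambda_{L,d}$, so they are equal. The product is symmetric in the roots of $P$, which confirms it descends to $K$. The case $P(A)=A-a$ is then the specialization with a single root $\theta=a$, giving $\Rcal_{X,A-a}(T)=\Fcal_X(a,T)$.

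The main obstacle is this last point: making sure the root convention is meaningful over the non-field ring $\Lambda_{K,d}$. I will handle it by passing to an algebraic closure of the fraction field, or by noting that the identity $\prod_i P(\alpha_i)=\prod_\theta f(\theta)$ for monic $f$ of even degree is a universal polynomial identity in the coefficients, so it can be specialized to any commutative ring. Everything else is the sign bookkeeping already carried out in Lemma~\ref{lem:even-degree}.
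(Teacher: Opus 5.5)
Your proposal is correct and follows the paper's argument. The paper cites the reflection identity $\Res_A(\Fcal_X,P)=(-1)^{2m\deg P}\Res_A(P,\Fcal_X)$ and then applies the root convention to the monic $P$. You prove that same reflection step directly through the roots of $\Fcal_X$ in an algebraic closure of $\operatorname{Frac}(\Lambda_{L,d})$, which also spells out the ring-versus-field bookkeeping that the paper leaves implicit.
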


\begin{proof}
Since $\Fcal_X(A,T)$ is monic of even degree $2m$ in $A$, the sign in
the reflection identity for resultants is
\[
(-1)^{\deg_A\Fcal_X\cdot\deg P}=(-1)^{2m\deg P}=1.
\]
Hence
\[
\Res_A(\Fcal_X,P)=\Res_A(P,\Fcal_X).
\]
The assertion follows from the resultant convention.
\end{proof}

\begin{lemma}[Admissibility of spectral resultants]\label{lem:admissibility}
Let $P(A)\in K[A]$ be monic. Assume that all weights $w(e)$ and all
coefficients of $P$ lie in a finite extension $K/\mathbb Q_p$. If
$\Rcal_{X,P}(T)\ne0$, then $\Rcal_{X,P}$ is admissible. In particular,
$\mu(\Rcal_{X,P})$ and $\lambda(\Rcal_{X,P})$ are well-defined.
\end{lemma}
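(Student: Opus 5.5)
The plan is to show first that $\Rcal_{X,P}(T)$ lies in $K[[T_1,\ldots,T_d]]$ and has bounded denominators. Then we clear a power of $\varpi$ to land in $\mathcal O[[T_1,\ldots,T_d]]$, and finally extract the exact $\varpi$-power dividing the result.

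\textbf{Integrality of the spectral polynomial.} Each entry of $W_{\tau,\alpha}$ is a finite sum $\sum w(e)\tau(\alpha(e))$. Since $\alpha(e)\in\mathbb Z_p^d$, the $p$-adic binomial expansion gives $\tau(\alpha(e))\in\mathcal O[[T_1,\ldots,T_d]]$; indeed it lies in $\mathbb Z_p[[T]]^\times$. The weights $w(e)=2/d_{o(e)}$ lie in $K$ by hypothesis, so there is an integer $c\ge0$ with $\varpi^c w(e)\in\mathcal O$ for all $e$. Hence $\varpi^cW_{\tau,\alpha}\in M_m(\mathcal O[[T]])$. Expanding the determinant $\det((A^2+1)I_m-AW_{\tau,\alpha})$ then shows that $\Fcal_X(A,T)\in\Lambda_{K,d}[A]$ is monic of degree $2m$ in $A$, with each coefficient lying in $\varpi^{-cm}\mathcal O[[T]]$.

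\textbf{Denominators of the resultant.} Since $P\in K[A]$ is monic, $\Rcal_{X,P}=\Res_A(\Fcal_X,P)=\Res_A(P,\Fcal_X)$ by the parity argument of Lemma~\ref{lem:R-root-expression}. It equals the determinant of the Sylvester matrix, or equivalently the determinant of multiplication by $\Fcal_X$ on the free module $\Lambda_{K,d}[A]/(P)$ with basis $1,A,\ldots,A^{\deg P-1}$, as in the proof of Proposition~\ref{prop:resultant-module}. Reducing $A^j$ modulo $P$ introduces only coefficients of $P$, which lie in $\varpi^{-c'}\mathcal O$ for some $c'$. Every entry of the representing matrix therefore lies in $\varpi^{-M}\mathcal O[[T]]$ for an explicit $M$ depending on $c$, $c'$, $m$ and $\deg P$. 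The determinant is a polynomial in these entries, so $\Rcal_{X,P}\in\varpi^{-M\deg P}\mathcal O[[T]]$. This places $\Rcal_{X,P}$ in $K[[T]]$, and after multiplying by a power of $\varpi$ it lies in $\mathcal O[[T]]$.

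\textbf{Exact power of $\varpi$.} Now let $G=\varpi^{s}\Rcal_{X,P}\in\mathcal O[[T]]$ with $G\neq0$. The remaining point is that $G$ is divisible by only finitely many powers of $\varpi$. I would argue this coefficientwise: some coefficient $g_\beta\in\mathcal O$ of $G$ is nonzero, and every coefficient of $G$ has valuation at least $0$. Let $N=\min_\beta v_\varpi(g_\beta)$. This minimum is attained because the valuations are nonnegative integers and at least one is finite. Then $G=\varpi^NG_0$ with $G_0\in\mathcal O[[T]]$ and $\varpi\nmid G_0$, since some coefficient of $G_0$ is a unit. Consequently $\Rcal_{X,P}=\varpi^{N-s}G_0$, which is exactly the admissibility condition.

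\textbf{Conclusion and main obstacle.} Since $\overline{G_0}\ne0$ in the UFD $k[[T]]$, the $\mu$- and $\lambda$-invariants defined above are well-defined. The only real care needed is in the denominator bookkeeping of the middle step, because the resultant is taken over $K$ rather than $\mathcal O$ and $P$ need not be integral. I expect this to be the main obstacle. Bounding the Sylvester determinant entrywise handles it uniformly, so I do not anticipate a genuine difficulty.
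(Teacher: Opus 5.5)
Your proposal is correct and follows essentially the same route as the paper. Both arguments use the integrality of the binomial expansions $\tau(\gamma)$, push a uniform denominator bound through the determinant defining $\Fcal_X$ and then through the Sylvester determinant (your multiplication-by-$\Fcal_X$ version on $\Lambda_{K,d}[A]/(P)$ is equivalent), and finally factor out $\varpi^N$, where $N$ is the minimal valuation among the nonzero coefficients. You are right that the key point is the bounded denominators, not mere membership in $K[[T_1,\ldots,T_d]]$, since only a lower bound on the valuations guarantees that this minimum exists.
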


\begin{proof}
For $a\in\mathbb Z_p$, the binomial expansion $(1+T)^a$ lies in
$\mathbb Z_p[[T]]$. The same assertion in several variables follows by
taking products of the one-variable binomial expansions. Hence the universal
character $\tau(\gamma)$ lies in
$\mathbb Z_p[[T_1,\ldots,T_d]]$ for every $\gamma\in\Gamma$.

The entries of $W_{\tau,\alpha}$ are finite $K$-linear combinations of
such integral power series. Since only finitely many weights occur, there
is a single integer $N_0$ such that every coefficient of every entry of
$W_{\tau,\alpha}$ lies in $\varpi^{-N_0}\mathcal O_K$. Therefore, after
multiplying by a suitable power of $\varpi$, all entries of
\[
A^2I_m-AW_{\tau,\alpha}+D^W(X)-I_m
\]
belong to $\mathcal O_K[[T_1,\ldots,T_d]][A]$. The determinant is a
finite polynomial expression in these entries, so another single power
of $\varpi$ clears all denominators of all coefficients of $\Fcal_X(A,T)$.
Since $P$ has finitely many coefficients, their denominators are also
bounded.  The Sylvester matrix defining
$\Res_A(\Fcal_X(A,T),P(A))$ has finite size and entries with bounded
denominators. Hence a single further power of $\varpi$ clears all
denominators in the resultant. 
Since \(\Rcal_{X,P}\ne0\), and since its coefficients have
\(\varpi\)-adic valuations bounded below, let \(N\) be the minimum of the
valuations of its nonzero coefficients. Then
\[
\Rcal_{X,P}=\varpi^N R_0,
\qquad
R_0\in\mathcal O_K[[T_1,\ldots,T_d]],
\qquad
\varpi\nmid R_0.
\]
This is the required admissible factorization.
\end{proof}

\begin{theorem}[Iwasawa-type spectral resultant growth formula]\label{thm:resultant-iwasawa}
Let $P(A)\in K[A]$ be monic and assume
\[
P(1)P(-1)\ne0.
\]
Assume further that
\[
\Rcal_{X,P}(\zeta-1)\ne0\qquad(\zeta\in W^d).
\]
Then $\det P(U_n)\ne0$ for all $n\ge0$, and, as $n\to\infty$,
\[
 v_p(\det P(U_n))
 =
 \mu^{\mathrm{qw}}_{X,P}p^{dn}
 +
 \lambda^{\mathrm{qw}}_{X,P}n p^{(d-1)n}
 +
 O(p^{(d-1)n}).
\]
Here
\[
\mu^{\mathrm{qw}}_{X,P}
=
\mu(\Rcal_{X,P})
-
\chi(X)v_p\left(\Res_A(A^2-1,P)\right)
\]
and
\[
\lambda^{\mathrm{qw}}_{X,P}=\lambda(\Rcal_{X,P}).
\]
When $d=1$, there exists $\nu\in\mathbb Q$ such that, for all sufficiently
large $n$,
\[
 v_p(\det P(U_n))=
 \mu^{\mathrm{qw}}_{X,P}p^n+
 \lambda^{\mathrm{qw}}_{X,P}n+\nu.
\]
\end{theorem}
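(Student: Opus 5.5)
The plan is to evaluate the universal spectral factorization (Theorem~\ref{thm:universal-factorization}) at the roots of $P$, take the product, and then apply Corollary~\ref{cor:CM-trivial} to $\Rcal_{X,P}$.

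Write $P(A)=\prod_{\theta\in\Theta^{\mathrm{mult}}(P)}(A-\theta)$ over a splitting field. Since $U_n$ has even size $2lq_n$, Lemma~\ref{lem:even-degree} gives
\[
\det P(U_n)=\prod_{\theta}C_n(\theta),\qquad C_n(A)=\det(AI_{2lq_n}-U_n).
\]
Because $P(1)P(-1)\ne0$, no root $\theta$ equals $\pm1$. Hence the rational function identity of Theorem~\ref{thm:universal-factorization} can be evaluated at each $\theta$, with $\theta^2-1\ne0$:
\[
C_n(\theta)=(\theta^2-1)^{-q_n\chi(X)}\prod_{\zeta\in W_n^d}\Fcal_X(\theta,\zeta-1).
\]
Taking the product over $\theta$ and using Lemma~\ref{lem:R-root-expression} pointwise at $T=\zeta-1$, I expect
\[
\det P(U_n)=\Bigl(\prod_\theta(\theta^2-1)\Bigr)^{-q_n\chi(X)}\prod_{\zeta\in W_n^d}\Rcal_{X,P}(\zeta-1).
\]
The first factor is $\Res_A(A^2-1,P)$ up to sign: it equals $\Res_A(P,A^2-1)$, which agrees with $\Res_A(A^2-1,P)$ because $\deg(A^2-1)$ is even. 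This factor is nonzero since $P(\pm1)\ne0$. The torsion non-vanishing hypothesis then gives $\det P(U_n)\ne0$ for every $n$.

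Taking $v_p$ gives the exact formula
\[
v_p(\det P(U_n))=-p^{dn}\chi(X)v_p\bigl(\Res_A(A^2-1,P)\bigr)+\sum_{\zeta\in W_n^d}v_p\bigl(\Rcal_{X,P}(\zeta-1)\bigr),
\]
where $q_n=p^{dn}$. By Lemma~\ref{lem:admissibility}, $\Rcal_{X,P}$ is admissible; it is nonzero because it is nonzero at torsion points. Corollary~\ref{cor:CM-trivial} then yields the sum as $\mu(\Rcal_{X,P})p^{dn}+\lambda(\Rcal_{X,P})np^{(d-1)n}+O(p^{(d-1)n})$. Combining the two terms gives $\mu^{\mathrm{qw}}_{X,P}$ and $\lambda^{\mathrm{qw}}_{X,P}$. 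For $d=1$ the Bass term is exactly proportional to $p^n$, so the eventual-constant form of the corollary transfers directly with the same $\nu$.

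The main obstacle is the evaluation step. Theorem~\ref{thm:universal-factorization} is an identity in $K(A)$ whose right-hand side has a pole factor at $A=\pm1$, and the individual factors $\Fcal_X(A,\zeta-1)$ have coefficients in $K(\zeta)$. I would check that specializing at $\theta\ne\pm1$ is legitimate by clearing the denominator, reading the identity as the polynomial identity $(A^2-1)^{q_n\chi(X)}C_n(A)=\prod_\zeta\Fcal_X(A,\zeta-1)$ when $\chi(X)\ge0$, or with the factor moved to the other side when $\chi(X)<0$, over $\overline{\Qp}[A]$. Evaluating at $\theta$ then divides by a nonzero quantity. A secondary point is that $v_p$ is taken in $\overline{\Qp}$, whereas $\mu$ and $\lambda$ are computed over $K$. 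Lemma~\ref{lem:scalar-extension} ensures these invariants do not change if one enlarges $K$, and the hypothesis of Theorem~\ref{thm:CM} is stated for values in the fixed $\overline{\Qp}$, so no further compatibility issue should arise.
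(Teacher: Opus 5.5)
Your proposal is correct and follows the paper's proof essentially step for step. Both arguments evaluate the universal spectral factorization at the roots of $P$ via Lemma~\ref{lem:even-degree}, collect the result into $\Res_A(A^2-1,P)^{-q_n\chi(X)}\prod_{\zeta\in W_n^d}\Rcal_{X,P}(\zeta-1)$, take $v_p$, and apply Corollary~\ref{cor:CM-trivial} through Lemma~\ref{lem:admissibility}. Your additional checks on the sign of the Bass resultant, on evaluating the rational identity away from $\pm1$, and on the unchanged constant $\nu$ when $d=1$ are correct and only make explicit what the paper leaves implicit.
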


\begin{proof}
Let $C_n(A)=\det(AI_{2lq_n}-U_n)$ and $q_n=p^{dn}$. Let $L/K$ be a
splitting field of $P$. We use the fixed extension of the $p$-adic
valuation to $\overline{\mathbb Q}_p$, restricted to $L$. Write
\[
P(A)=\prod_{\theta\in\Theta^{\mathrm{mult}}(P)}(A-\theta),
\]
with roots counted with multiplicity.

Since $C_n(A)$ has even degree $2lq_n$, Lemma~\ref{lem:even-degree}
gives
\[
\det P(U_n)=\prod_{\theta\in\Theta^{\mathrm{mult}}(P)}C_n(\theta).
\]
By the universal spectral factorization, and because $P(1)P(-1)\ne0$,
each root $\theta$ satisfies $\theta\ne\pm1$, so
\[
C_n(\theta)
=
(\theta^2-1)^{-q_n\chi(X)}
\prod_{\zeta\in W_n^d}\Fcal_X(\theta,\zeta-1).
\]
Multiplying over all roots \(\theta\), using the resultant convention,
Lemma~\ref{lem:R-root-expression}, and the fact that \(\deg(A^2-1)=2\),
we obtain
\[
\det P(U_n)
=
\Res_A(A^2-1,P)^{-q_n\chi(X)}
\prod_{\zeta\in W_n^d}\Rcal_{X,P}(\zeta-1).
\]
The right-hand side is nonzero by the assumptions
$P(1)P(-1)\ne0$ and $\Rcal_{X,P}(\zeta-1)\ne0$ for all torsion
points. Thus $\det P(U_n)\ne0$ for all $n$. Taking $p$-adic valuations
gives
\[
 v_p(\det P(U_n))
=
-q_n\chi(X)v_p\left(\Res_A(A^2-1,P)\right)
+
\sum_{\zeta\in W_n^d}v_p(\Rcal_{X,P}(\zeta-1)).
\]
The torsion non-vanishing assumption implies in particular that
\(\Rcal_{X,P}\ne0\). By Lemma~\ref{lem:admissibility} and
Corollary~\ref{cor:CM-trivial},
\[
\sum_{\zeta\in W_n^d}v_p(\Rcal_{X,P}(\zeta-1))
=
\mu(\Rcal_{X,P})p^{dn}
+
\lambda(\Rcal_{X,P})n p^{(d-1)n}
+
O(p^{(d-1)n}).
\]
The Bass factor contributes
\[
-\chi(X)v_p\left(\Res_A(A^2-1,P)\right)p^{dn},
\]
which modifies only the leading $\mu$-coefficient. This proves the
formula.
\end{proof}

\subsection{Finite-level specializations of the spectral resultant module}\label{subsec:finite-level-module}

We first record a simple sufficient condition for the integrality
assumption used below.  If
\[
p\nmid d_v
\qquad
(v\in V(X)),
\]
then \(d_v\in \mathcal O_K^\times\), and hence the Grover weights satisfy
\[
w(e)=\frac{2}{d_{o(e)}}\in \mathcal O_K
\qquad
(e\in E(X)).
\]
It follows that the universal weighted adjacency matrix
\(W_{\tau,\alpha}\) has entries in
\(\mathcal O_K[[T_1,\ldots,T_d]]\).  Consequently,
\[
\mathcal F_X(A,T)
=
\det\left((A^2+1)I_m-AW_{\tau,\alpha}\right)
\in \mathcal O_K[[T_1,\ldots,T_d]][A].
\]
Thus, under the condition \(p\nmid d_v\) for all vertices \(v\), and for
any monic \(P(A)\in\mathcal O_K[A]\), the spectral resultant module
\(M_{X,P}\) is naturally defined over the integral Iwasawa algebra
\[
\Lambda_{\mathcal O_K,d}
=
\mathcal O_K[[T_1,\ldots,T_d]].
\]

\begin{proposition}[Finite-level length formula for the spectral resultant module]
\label{prop:finite-level-resultant-module-length}
Let \(P(A)\in\mathcal O_K[A]\) be monic and assume that
\[
\Fcal_X(A,T)\in\mathcal O_K[[T_1,\ldots,T_d]][A].
\]
For \(n\ge0\), put
\[
J_n=
\bigl((1+T_1)^{p^n}-1,\ldots,(1+T_d)^{p^n}-1\bigr)
\subset \Lambda_{\mathcal O_K,d},
\]
and set
\[
M_{X,P,n}
=
M_{X,P}\otimes_{\Lambda_{\mathcal O_K,d}}
\left(\Lambda_{\mathcal O_K,d}/J_n\right).
\]
Assume that
\[
\Rcal_{X,P}(\zeta-1)\ne0
\qquad
(\zeta\in W^d).
\]
Then \(M_{X,P,n}\) has finite \(\mathcal O_K\)-length for every \(n\ge0\),
and
\[
\frac{1}{e(K/\mathbb Q_p)}
\operatorname{length}_{\mathcal O_K}(M_{X,P,n})
=
\sum_{\zeta\in W_n^d}
v_p\bigl(\Rcal_{X,P}(\zeta-1)\bigr).
\]
\end{proposition}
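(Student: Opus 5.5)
The plan is to present $M_{X,P,n}$ as the cokernel of an endomorphism of a finite free module over $R_n:=\Lambda_{\mathcal O_K,d}/J_n$, and then compute its length by diagonalizing after inverting $p$. Right exactness of $\otimes$ gives
\[
M_{X,P,n}\simeq\operatorname{coker}\bigl(m_{\Fcal_X}\otimes R_n : N_{X,P}\otimes R_n\to N_{X,P}\otimes R_n\bigr).
\]
Since $P$ is monic, $N_{X,P}\otimes R_n\simeq R_n[A]/(P(A))$ is free of rank $\deg P$ over $R_n$.

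Next I identify $R_n$. Each $(1+T_i)^{p^n}-1$ is a distinguished polynomial, so
\[
R_n\simeq\mathcal O_K[\Gamma_n]=\bigotimes_{i=1}^d \mathcal O_K[T_i]/\bigl((1+T_i)^{p^n}-1\bigr),
\]
which is free of rank $p^{dn}$ over $\mathcal O_K$. Hence $N_n:=R_n[A]/(P)$ is a free $\mathcal O_K$-module of rank $p^{dn}\deg P$. The map $\phi_n$ (multiplication by $\Fcal_X$) is an $\mathcal O_K$-endomorphism of it, and I will use the standard fact that for such an endomorphism with $\det_{\mathcal O_K}\phi_n\neq0$,
\[
\operatorname{length}_{\mathcal O_K}\operatorname{coker}\phi_n=v_\varpi(\det\phi_n)=e(K/\mathbb Q_p)\,v_p(\det\phi_n).
\]
This holds by the Smith normal form over the DVR $\mathcal O_K$.

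The determinant is computed by transitivity of norms:
\[
\det\nolimits_{\mathcal O_K}(\phi_n)=N_{R_n/\mathcal O_K}\bigl(\det\nolimits_{R_n}(\text{mult by }\Fcal_X\text{ on }R_n[A]/(P))\bigr).
\]
By the argument of Proposition~\ref{prop:resultant-module}, the inner determinant is the image $\overline{\Rcal}_{X,P}$ of $\Rcal_{X,P}$ in $R_n$; computing it as a resultant commutes with base change, since the Sylvester or multiplication matrix is formed coefficientwise. For the outer norm, extend scalars to $L=K(W_n)$. Then
\[
R_n\otimes_{\mathcal O_K}L\simeq\prod_{\zeta\in W_n^d}L
\]
via $T\mapsto\zeta-1$, because the polynomials $(1+T_i)^{p^n}-1$ are separable with roots $\zeta_i-1$. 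So the norm of $\overline{\Rcal}_{X,P}$ is $\prod_{\zeta\in W_n^d}\Rcal_{X,P}(\zeta-1)$. Here evaluation of the power series at $\zeta-1$ converges, since $v_p(\zeta_i-1)>0$, and agrees with evaluation of its image in $R_n$. The determinant is invariant under this scalar extension.

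By hypothesis this product is nonzero, so the cokernel has finite length. Taking $v_p$ gives the stated formula, after dividing by $e(K/\mathbb Q_p)$.

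The main obstacle I expect is justifying cleanly that reduction modulo $J_n$ commutes with evaluation at $\zeta-1$ and with the resultant. I would handle this by working entirely with $\mathcal O_K[\Gamma_n]$ and the continuous ring maps $\Lambda_{\mathcal O_K,d}\to\mathcal O_L$, $T\mapsto\zeta-1$, which factor through $R_n$ because they kill $J_n$.
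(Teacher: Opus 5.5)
Your proposal is correct and follows essentially the same route as the paper. Both realize $M_{X,P,n}$ as the cokernel of the induced endomorphism over $R_n=\Lambda_{\mathcal O_K,d}/J_n$, and identify its $\mathcal O_K$-determinant as the norm from $R_n$ of the image of $\Rcal_{X,P}$. Both then split $R_n\otimes_{\mathcal O_K}L$ by the Chinese remainder theorem into the evaluations at $\zeta-1$ and conclude with the determinant--length formula over the DVR $\mathcal O_K$.
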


\begin{proof}
By Proposition~\ref{prop:resultant-module}, \(M_{X,P}\) is the cokernel of
multiplication by \(\Fcal_X(A,T)\) on the finite free
\(\Lambda_{\mathcal O_K,d}\)-module
\[
N_{X,P}=\Lambda_{\mathcal O_K,d}[A]/(P(A)).
\]
Moreover,
\[
\det_{\Lambda_{\mathcal O_K,d}}(m_{\Fcal_X})=\Rcal_{X,P}(T).
\]
Tensoring with \(\Lambda_{\mathcal O_K,d}/J_n\) gives
\[
M_{X,P,n}
\simeq
\operatorname{coker}(m_{\Fcal_X,n}),
\]
where \(m_{\Fcal_X,n}\) is the induced endomorphism of
\[
N_{X,P}\otimes_{\Lambda_{\mathcal O_K,d}}
\Lambda_{\mathcal O_K,d}/J_n.
\]

Let
\[
R_n=\Lambda_{\mathcal O_K,d}/J_n.
\]
After putting \(S_i=1+T_i\), the quotient \(R_n\) is obtained by imposing
the monic equations \(S_i^{p^n}=1\).  Hence \(R_n\) is finite free over
\(\mathcal O_K\) of rank \(p^{dn}\).  In particular, the source and target
of \(m_{\Fcal_X,n}\) are finite free \(\mathcal O_K\)-modules.

As an \(R_n\)-linear endomorphism, \(m_{\Fcal_X,n}\) has determinant equal
to the image of \(\Rcal_{X,P}(T)\) in \(R_n\).  Hence, as an
\(\mathcal O_K\)-linear endomorphism, its determinant is the determinant of
multiplication by this image on the finite free \(\mathcal O_K\)-module
\(R_n\), equivalently the norm of this element from the finite
\(\mathcal O_K\)-algebra \(R_n\).

Let \(L/K\) be a finite extension containing \(W_n\).  Over \(L\), each
polynomial \(S_i^{p^n}-1\) splits into distinct linear factors
\[
S_i^{p^n}-1=\prod_{\zeta_i\in W_n}(S_i-\zeta_i).
\]
The Chinese remainder theorem gives
\[
R_n\otimes_{\mathcal O_K}L
\simeq
\prod_{\zeta\in W_n^d}L,
\]
where \(T_i=S_i-1\) is sent to \(\zeta_i-1\).  Under this decomposition,
the image of \(\Rcal_{X,P}(T)\) corresponds to
\[
\bigl(\Rcal_{X,P}(\zeta-1)\bigr)_{\zeta\in W_n^d}.
\]
Hence the determinant of multiplication by the image of \(\Rcal_{X,P}\)
becomes
\[
\prod_{\zeta\in W_n^d}\Rcal_{X,P}(\zeta-1)
\]
after base change to \(L\).  Although the individual factors lie in
\(L\), the full product is invariant under the Galois action on
\(W_n^d\), and hence lies in \(K\).  Thus
\[
\det_{\mathcal O_K}(m_{\Fcal_X,n})
=
\prod_{\zeta\in W_n^d}\Rcal_{X,P}(\zeta-1)
\]
as an element of \(K\).

By the torsion non-vanishing assumption, this determinant is nonzero.  Since
\(\mathcal O_K\) is a discrete valuation ring, the elementary
determinant-length formula for an endomorphism \(f\) of a finite free
\(\mathcal O_K\)-module gives
\[
\operatorname{length}_{\mathcal O_K}\operatorname{coker}(f)
=
 v_{\varpi}(\det f),
\]
where \(v_{\varpi}(\varpi)=1\).  Applying this to \(m_{\Fcal_X,n}\) and
using \(v_{\varpi}=e(K/\mathbb Q_p)v_p\), we obtain
\[
\frac{1}{e(K/\mathbb Q_p)}
\operatorname{length}_{\mathcal O_K}(M_{X,P,n})
=
\sum_{\zeta\in W_n^d}
v_p\bigl(\Rcal_{X,P}(\zeta-1)\bigr).
\]
\end{proof}

\begin{corollary}[Iwasawa-type growth of finite-level lengths]
\label{cor:finite-level-resultant-module-growth}
Under the assumptions of
Proposition~\ref{prop:finite-level-resultant-module-length}, as \(n\to\infty\),
one has
\[
\frac{1}{e(K/\mathbb Q_p)}
\operatorname{length}_{\mathcal O_K}(M_{X,P,n})
=
\mu(\Rcal_{X,P})p^{dn}
+
\lambda(\Rcal_{X,P})n p^{(d-1)n}
+
O(p^{(d-1)n}).
\]
When \(d=1\), there exists \(\nu\in\mathbb Q\) such that, for all
sufficiently large \(n\),
\[
\frac{1}{e(K/\mathbb Q_p)}
\operatorname{length}_{\mathcal O_K}(M_{X,P,n})
=
\mu(\Rcal_{X,P})p^n
+
\lambda(\Rcal_{X,P})n
+
\nu.
\]
\end{corollary}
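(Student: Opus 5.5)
The corollary follows by combining Proposition~\ref{prop:finite-level-resultant-module-length} with the Cuoco--Monsky asymptotics in the form of Corollary~\ref{cor:CM-trivial}. The first step is to invoke Proposition~\ref{prop:finite-level-resultant-module-length}, whose hypotheses are exactly those assumed here. It gives, for every $n\ge0$, that $M_{X,P,n}$ has finite $\mathcal O_K$-length and
\[
\frac{1}{e(K/\mathbb Q_p)}\operatorname{length}_{\mathcal O_K}(M_{X,P,n})
=\sum_{\zeta\in W_n^d}v_p\bigl(\Rcal_{X,P}(\zeta-1)\bigr).
\]
After this, the problem is purely about the growth of the right-hand side.

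The second step is to check that Corollary~\ref{cor:CM-trivial} applies to $F=\Rcal_{X,P}$. The torsion non-vanishing hypothesis at $\zeta=(1,\ldots,1)$ gives $\Rcal_{X,P}(0)\ne0$, so $\Rcal_{X,P}\ne0$. Since $\Fcal_X$ has coefficients in $\mathcal O_K[[T_1,\ldots,T_d]]$ and $P\in\mathcal O_K[A]$, the resultant lies in $\mathcal O_K[[T_1,\ldots,T_d]]$. Admissibility then follows either directly, by factoring out the minimal $\varpi$-power of its coefficients, or from Lemma~\ref{lem:admissibility}. Consequently $\mu(\Rcal_{X,P})$ and $\lambda(\Rcal_{X,P})$ are defined. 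The torsion non-vanishing hypothesis for all $\zeta\in W^d$ is precisely the hypothesis of Corollary~\ref{cor:CM-trivial}.

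Applying Corollary~\ref{cor:CM-trivial} to the sum gives
\[
\sum_{\zeta\in W_n^d}v_p\bigl(\Rcal_{X,P}(\zeta-1)\bigr)
=\mu(\Rcal_{X,P})p^{dn}+\lambda(\Rcal_{X,P})np^{(d-1)n}+O(p^{(d-1)n}),
\]
and, when $d=1$, the eventually exact form with a constant $\nu\in\mathbb Q$. Substituting into the length identity proves both assertions.

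There is no real obstacle. The only point needing care is that the characters are evaluated in $\overline{\Qp}$ rather than in $K$. Lemma~\ref{lem:scalar-extension} ensures that $\mu$ and $\lambda$ do not change under finite coefficient extension. The normalization of $v_p$ is the same fixed extension used in both results, so the two identities are compatible.
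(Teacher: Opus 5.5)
Your proposal is correct and follows the paper's proof essentially verbatim. You use the length identity of Proposition~\ref{prop:finite-level-resultant-module-length}, the nonvanishing and admissibility of $\Rcal_{X,P}$ (via Lemma~\ref{lem:admissibility}), and then Corollary~\ref{cor:CM-trivial} for both the general and the $d=1$ statements; your closing appeal to Lemma~\ref{lem:scalar-extension} is unnecessary, since Corollary~\ref{cor:CM-trivial} already evaluates $F\in\Lambda_{K,d}$ at points of $\overline{\Qp}$, but it does no harm.
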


\begin{proof}
The torsion non-vanishing assumption implies that
\[
\Rcal_{X,P}\ne0.
\]
By Lemma~\ref{lem:admissibility}, \(\Rcal_{X,P}\) is admissible. Hence
Corollary~\ref{cor:CM-trivial} gives
\[
\sum_{\zeta\in W_n^d}
v_p\bigl(\Rcal_{X,P}(\zeta-1)\bigr)
=
\mu(\Rcal_{X,P})p^{dn}
+
\lambda(\Rcal_{X,P})n p^{(d-1)n}
+
O(p^{(d-1)n}).
\]
The result follows from
Proposition~\ref{prop:finite-level-resultant-module-length}. The assertion
for \(d=1\) follows in the same way from the one-dimensional statement
in Corollary~\ref{cor:CM-trivial}.
\end{proof}

\begin{corollary}[Finite-level length and Grover determinants]
\label{cor:length-det-relation}
Assume, in addition to the assumptions of
Proposition~\ref{prop:finite-level-resultant-module-length}, that
\[
P(1)P(-1)\ne0.
\]
Then, for every \(n\ge0\),
\[
\frac{1}{e(K/\mathbb Q_p)}
\operatorname{length}_{\mathcal O_K}(M_{X,P,n})
=
v_p(\det P(U_n))
+
p^{dn}\chi(X)
v_p\left(\Res_A(A^2-1,P)\right).
\]
Thus the finite-level length recovers the Grover determinant valuation up
to the explicit Bass correction.
\end{corollary}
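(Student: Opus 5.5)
The plan is to combine Proposition~\ref{prop:finite-level-resultant-module-length} with the exact (non-asymptotic) identity already derived in the proof of Theorem~\ref{thm:resultant-iwasawa}. Under the present hypotheses, Proposition~\ref{prop:finite-level-resultant-module-length} gives, for every $n\ge0$,
\[
\frac{1}{e(K/\mathbb Q_p)}\operatorname{length}_{\mathcal O_K}(M_{X,P,n})
=\sum_{\zeta\in W_n^d}v_p\bigl(\Rcal_{X,P}(\zeta-1)\bigr).
\]
It therefore suffices to express the right-hand side through $v_p(\det P(U_n))$.

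For this step, I will redo the first half of the proof of Theorem~\ref{thm:resultant-iwasawa} at a fixed level $n$, with no passage to asymptotics. Since $C_n(A)=\det(AI_{2lq_n}-U_n)$ has even degree $2lq_n$, Lemma~\ref{lem:even-degree} gives $\det P(U_n)=\prod_{\theta}C_n(\theta)$, the product taken over $\Theta^{\mathrm{mult}}(P)$. The assumption $P(1)P(-1)\ne0$ gives $\theta\ne\pm1$ for every root, so Theorem~\ref{thm:universal-factorization} can be evaluated at $A=\theta$ without meeting the pole of the Bass factor. Taking the product over $\theta$ and applying Lemma~\ref{lem:R-root-expression} yields
\[
\det P(U_n)=\Res_A(A^2-1,P)^{-q_n\chi(X)}\prod_{\zeta\in W_n^d}\Rcal_{X,P}(\zeta-1),
\]
and here $\Res_A(A^2-1,P)=\prod_\theta(\theta^2-1)\ne0$.

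By the torsion non-vanishing hypothesis every factor is nonzero, so both sides have finite valuation. Taking $v_p$ and using $q_n=p^{dn}$ gives
\[
\sum_{\zeta\in W_n^d}v_p(\Rcal_{X,P}(\zeta-1))=v_p(\det P(U_n))+p^{dn}\chi(X)v_p(\Res_A(A^2-1,P)).
\]
Substituting this into the length formula proves the corollary.

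I expect no real obstacle. The one point to watch is that the identity must be used at each finite level as an exact equality in $\overline{\Qp}$, not only asymptotically, and the argument above provides exactly that.
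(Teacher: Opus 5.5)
Your proposal is correct and follows the same route as the paper's proof. Both derive the exact level-$n$ identity $\det P(U_n)=\Res_A(A^2-1,P)^{-q_n\chi(X)}\prod_{\zeta\in W_n^d}\Rcal_{X,P}(\zeta-1)$ as in the proof of Theorem~\ref{thm:resultant-iwasawa}, take $p$-adic valuations, and substitute the length formula of Proposition~\ref{prop:finite-level-resultant-module-length}.
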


\begin{proof}
Let \(q_n=p^{dn}\). By the universal spectral factorization and the
resultant convention, as in the proof of
Theorem~\ref{thm:resultant-iwasawa}, we have
\[
\det P(U_n)
=
\Res_A(A^2-1,P)^{-q_n\chi(X)}
\prod_{\zeta\in W_n^d}
\Rcal_{X,P}(\zeta-1).
\]
Taking \(p\)-adic valuations gives
\[
v_p(\det P(U_n))
=
-q_n\chi(X)v_p\left(\Res_A(A^2-1,P)\right)
+
\sum_{\zeta\in W_n^d}
v_p\bigl(\Rcal_{X,P}(\zeta-1)\bigr).
\]
Using Proposition~\ref{prop:finite-level-resultant-module-length}, we obtain
\[
v_p(\det P(U_n))
=
-p^{dn}\chi(X)v_p\left(\Res_A(A^2-1,P)\right)
+
\frac{1}{e(K/\mathbb Q_p)}
\operatorname{length}_{\mathcal O_K}(M_{X,P,n}).
\]
Rearranging proves the assertion.
\end{proof}

\begin{proposition}[A unit criterion for non-vanishing]\label{prop:unit-criterion}
Assume that
\[
\Rcal_{X,P}(T)\in \mathcal O_K[[T_1,\ldots,T_d]]
\]
and that
\[
\Rcal_{X,P}(0)\in\mathcal O_K^\times.
\]
Then
\[
\Rcal_{X,P}(\zeta-1)\ne0\qquad(\zeta\in W^d).
\]
\end{proposition}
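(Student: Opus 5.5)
The plan is to show that evaluation at a torsion point $\zeta-1$ only perturbs $\Rcal_{X,P}(0)$ by something of strictly positive valuation. Write
\[
\Rcal_{X,P}(T)=\Rcal_{X,P}(0)+\sum_{i=1}^d T_i\,G_i(T),
\qquad G_i\in\mathcal O_K[[T_1,\ldots,T_d]].
\]
Such a decomposition exists because every monomial of positive total degree is divisible by some $T_i$. The coefficients of $\Rcal_{X,P}$ lie in $\mathcal O_K$ by hypothesis, so the $G_i$ can be chosen with integral coefficients.

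Now fix $\zeta=(\zeta_1,\ldots,\zeta_d)\in W^d$ and work in the finite extension $L=K(\zeta_1,\ldots,\zeta_d)$ with valuation ring $\mathcal O_L$. For each $i$ we have $v_p(\zeta_i-1)>0$. If $\zeta_i=1$ this is $+\infty$. Otherwise $\zeta_i$ is a primitive $p^k$-th root of unity with $k\ge1$, and $v_p(\zeta_i-1)=1/(p^{k-1}(p-1))>0$. Hence each $\zeta_i-1$ lies in the maximal ideal $\mathfrak m_L$ of $\mathcal O_L$.

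Next I check convergence. Since the power series have $\mathcal O_K$ coefficients and the arguments lie in $\mathfrak m_L$, and $\mathcal O_L$ is $\mathfrak m_L$-adically complete, the specializations $G_i(\zeta-1)$ converge and lie in $\mathcal O_L$. The specialization is also compatible with the decomposition, because substitution of elements of $\mathfrak m_L$ defines a continuous ring homomorphism $\mathcal O_K[[T]]\to\mathcal O_L$. This gives
\[
\Rcal_{X,P}(\zeta-1)=\Rcal_{X,P}(0)+\sum_{i=1}^d(\zeta_i-1)G_i(\zeta-1),
\]
where the sum lies in $\mathfrak m_L$.

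Since $\Rcal_{X,P}(0)\in\mathcal O_K^\times\subset\mathcal O_L^\times$, the right-hand side is a unit plus an element of the maximal ideal. It is therefore a unit of $\mathcal O_L$, and in particular nonzero. Equivalently, $v_p(\Rcal_{X,P}(\zeta-1))=0$. This holds for every $\zeta\in W^d$, which proves the claim.

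The argument is short. The only point needing care is the convergence and homomorphism property of specialization at points of $\mathfrak m_L$, and that is standard for integral power series.
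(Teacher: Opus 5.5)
Your proof is correct and is essentially the paper's argument. The paper notes that a unit constant term makes $\Rcal_{X,P}$ a unit of $\mathcal O_K[[T_1,\ldots,T_d]]$, and that the continuous specialization $T_i\mapsto\zeta_i-1$ into the valuation ring of a finite extension sends units to units; you unpack the same fact directly as ``a unit plus an element of $\mathfrak m_L$ is a unit,'' which makes explicit that $v_p(\Rcal_{X,P}(\zeta-1))=0$.
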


\begin{proof}
If the constant term \(\Rcal_{X,P}(0)\) is a unit, then
\(\Rcal_{X,P}\) is a unit in the formal power series ring
\(\mathcal O_K[[T_1,\ldots,T_d]]\). For any torsion point
\(\zeta\in W^d\), the specialization
\(T_i\mapsto \zeta_i-1\) is a continuous homomorphism to the valuation
ring of a finite extension of \(K\), and it sends units to units. Hence
\(\Rcal_{X,P}(\zeta-1)\ne0\).
\end{proof}

\begin{remark}
The condition \(\Rcal_{X,P}(0)\ne0\) means that no root of \(P\) occurs in
the base-level Grover spectrum, apart from the Bass factor already
excluded by \(P(1)P(-1)\ne0\). The stronger condition
\(\Rcal_{X,P}(0)\in\mathcal O_K^\times\) in
Proposition~\ref{prop:unit-criterion} is a useful sufficient condition for
the torsion non-vanishing hypothesis in
Theorem~\ref{thm:resultant-iwasawa}. It is not a necessary condition.
\end{remark}

\begin{example}[A non-vanishing case in the \(K_3\)-tower]\label{ex:k3-unit}
In the \(K_3\)-tower of Section~\ref{subsec:k3-tower}, one has
\[
\Rcal_{X,A-a}(T)=(a^3-(1+T))(a^3-(1+T)^{-1}).
\]
Thus
\[
\Rcal_{X,A-a}(0)=(a^3-1)^2.
\]
For \(p=2\) and \(a=2\), this constant term is
\[
(2^3-1)^2=49\in\mathbb Z_2^\times.
\]
Hence Proposition~\ref{prop:unit-criterion} applies and gives
\[
\Rcal_{X,A-2}(\zeta-1)\ne0
\qquad
(\zeta\in\mu_{2^\infty}).
\]
This agrees with the explicit computation in the same tower, where
\(v_2(\det(2I-U_n))=0\) at every level.
\end{example}

\begin{corollary}[The linear case]\label{cor:linear}
Let $a\in K$ satisfy $a\ne\pm1$, and assume that
\[
 a\notin\Spec(U_n)\qquad(n\ge0).
\]
Put $P(A)=A-a$. Then
\[
\Rcal_{X,P}(T)=\Fcal_X(a,T),
\qquad
\Res_A(A^2-1,P)=a^2-1.
\]
Hence
\[
\mu^{\mathrm{qw}}_{X,A-a}
=
\mu(\Fcal_X(a,T))-\chi(X)v_p(a^2-1),
\qquad
\lambda^{\mathrm{qw}}_{X,A-a}=\lambda(\Fcal_X(a,T)).
\]
Thus Theorem~\ref{thm:resultant-iwasawa} recovers the leading invariants in
\cite[Theorem~7.3]{AMT2026} in the special case $P(A)=A-a$.  If one uses
the sharper Cuoco--Monsky expansion employed in \cite[Theorem~3.5]{AMT2026},
the corresponding lower-order terms are recovered as well.
\end{corollary}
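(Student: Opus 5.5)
The plan is to specialize Theorem~\ref{thm:resultant-iwasawa} to $P(A)=A-a$, so the work is checking its hypotheses and computing the two resultants.

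\emph{Resultants.} With the convention $\Res_A(f,g)=\prod_i g(\alpha_i)$ for monic $f$, Lemma~\ref{lem:R-root-expression} applied to the single root $\theta=a$ gives $\Rcal_{X,P}(T)=\Fcal_X(a,T)$. For the Bass factor, the roots of $A^2-1$ are $\pm1$, so
\[
\Res_A(A^2-1,A-a)=(1-a)(-1-a)=a^2-1 .
\]

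\emph{Hypotheses of Theorem~\ref{thm:resultant-iwasawa}.} The condition $P(1)P(-1)=(1-a)(-1-a)\ne0$ follows from $a\ne\pm1$. The main point is torsion non-vanishing. Let $\zeta\in W^d$; then $\zeta\in W_n^d$ for some $n$. Suppose, for contradiction, that $\Fcal_X(a,\zeta-1)=0$. Apply Corollary~\ref{cor:point-multiplicity} at the point $a$. Since $a\ne\pm1$, the Bass term vanishes, so
\[
\ord_{A=a}\det(AI-U_n)=\sum_{\zeta'\in W_n^d}\ord_{A=a}\Fcal_X(A,\zeta'-1).
\]
Each summand is $\ge0$, because each $\Fcal_X(A,\zeta'-1)$ is a polynomial in $A$. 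The summand at $\zeta'=\zeta$ is $\ge1$. Hence $a\in\Spec(U_n)$, contradicting the assumption. Therefore $\Rcal_{X,P}(\zeta-1)\ne0$ for all $\zeta\in W^d$.

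\emph{Conclusion.} Theorem~\ref{thm:resultant-iwasawa} now yields the displayed formulas for $\mu^{\mathrm{qw}}_{X,A-a}$ and $\lambda^{\mathrm{qw}}_{X,A-a}$. The remaining step is to compare them with \cite[Theorem~7.3]{AMT2026}. That result is proved for the same quantity $v_p(\det(aI-U_n))$ and rests on the same determinant identity, Lemma~\ref{lem:weighted-ihara-grover}, specialized at $t=a^{-1}$. I will match its characteristic element with $\Fcal_X(a,T)$ up to the factor $a^{2m}$ coming from Lemma~\ref{lem:h-specialization}, which does not depend on $T$. If the comparison requires it, I will note that this factor shifts $\mu$ by $2m\,v_p(a)$ per character; this shift must be tracked consistently with their Bass correction.

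\emph{Expected obstacle.} The delicate part is this bookkeeping against the cited paper's normalization, in particular the $A^{2m}$ versus $t$-normalization and the power of $(1-t^2)$. Everything internal to the present paper is immediate.

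The final sentence, about lower-order terms, follows by substituting the sharper expansion \cite[Theorem~3.5]{AMT2026} for Corollary~\ref{cor:CM-trivial}. This is possible because the exact identity
\[
v_p(\det P(U_n))=-q_n\chi(X)v_p(a^2-1)+\sum_{\zeta\in W_n^d}v_p\bigl(\Fcal_X(a,\zeta-1)\bigr),
\]
established in the proof of Theorem~\ref{thm:resultant-iwasawa}, holds at every level $n$.
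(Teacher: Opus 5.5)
Your proposal is correct and matches the paper's own (implicit) argument. The corollary is Theorem~\ref{thm:resultant-iwasawa} specialized via Lemma~\ref{lem:R-root-expression} and $\Res_A(A^2-1,A-a)=a^2-1$, and the non-eigenvalue hypothesis gives torsion non-vanishing through the universal factorization, exactly as you derive it from Corollary~\ref{cor:point-multiplicity}; this is the linear case of Theorem~\ref{thm:exceptional}. Your normalization concern also resolves cleanly: passing from the $t=a^{-1}$ form to the $A=a$ form produces the factors $a^{2lq_n}$, $a^{2q_n\chi(X)}$ and $a^{-2mq_n}$, and these cancel because $\chi(X)=m-l$.
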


\begin{remark}[Meaning of the non-vanishing condition]
The condition
\[
\Rcal_{X,P}(\zeta-1)\ne0
\qquad
(\zeta\in W^d)
\]
means that $P(A)$ and $\Fcal_X(A,\zeta-1)$ have no common root for any
torsion character $\zeta$. In Section~\ref{sec:exceptional} we show that,
after separating the Bass factor $A^2-1$, the failure of this condition
is precisely the occurrence of a root of $P$ as a Grover eigenvalue at a
finite level of the tower.
\end{remark}

\subsection{A computational recipe}\label{subsec:computational-recipe}

We record the practical computation of the spectral resultant
$\Rcal_{X,P}$. First one forms the universal weighted adjacency matrix
$W_{\tau,\alpha}$ and computes
\[
\Fcal_X(A,T)=
\det(A^2I_m-AW_{\tau,\alpha}+D^W(X)-I_m).
\]
For a monic polynomial $P(A)$, the spectral resultant is obtained as the
determinant of the Sylvester matrix of $\Fcal_X(A,T)$ and $P(A)$ with
respect to the variable $A$ over the coefficient ring
$K[[T_1,\ldots,T_d]]$:
\[
\Rcal_{X,P}(T)=\Res_A(\Fcal_X(A,T),P(A)).
\]
Assume that \(\Rcal_{X,P}\) is nonzero. Since its coefficients have \(\varpi\)-adic valuations bounded below, 
let \(N\) be the minimum of the valuations of its nonzero coefficients. 
Then we can write 
\[ \Rcal_{X,P}=\varpi^N R_0, \qquad R_0\in\mathcal O_K[[T_1,\ldots,T_d]], \qquad \varpi\nmid R_0. \]
Then \[ \mu(\Rcal_{X,P})=\frac{N}{e(K/\mathbb Q_p)}, \]
and the $\lambda$-invariant is computed from the reduction $\overline R_0$
as the divisor sum over the height-one linear primes $(\sigma-1)$ in the
residue power series ring. If $\Rcal_{X,P}=0$, then these Iwasawa
invariants are not defined for this element. If $\Rcal_{X,P}\ne0$ but
$\Rcal_{X,P}(\zeta-1)=0$ for some torsion point $\zeta$, then the
exceptional-zero theory of Section~\ref{sec:exceptional} applies.  This
normalization is used in the examples below.

\section{Equivariant Factorization and the Unramified Quantum Kida Formula}
\label{sec:kida}

In this section we prove an equivariant factorization formula for
spectral elements and deduce a Kida-type formula for spectral resultant
elements.  The construction below is parallel to the Artin formalism for
Mizuno--Sato weighted $L$-functions.  Twisted matrices indexed by
irreducible representations are standard in the theory of weighted graph
zeta functions.  In the present spectral setting, the passage to the
Grover polynomial in $A$ allows the regular representation decomposition
to be used directly at the level of spectral elements.  This factorization
is the algebraic input for the unramified Kida formula below.  For finite
$p$-group covers, every irreducible representation has only trivial
Jordan--H\"older factors after reduction modulo $p$.  This gives the
congruence, used below, between the spectral resultant of the cover and a
power of the spectral resultant of the base graph.

\subsection{Equivariant spectral elements}\label{subsec:equivariant-spectral-elements}

Let
\[
\pi:Y\to X
\]
be a finite connected unramified Galois cover with Galois group
\[
G=\Gal(Y/X).
\]
We write
\[
Y=X(G,\beta)
\]
for a voltage description of the cover, where
\[
\beta:E(X)\to G.
\]
Let
\[
  \pi_E:E(Y)\longrightarrow E(X)
\]
be the map on directed edges induced by the covering map $\pi:Y\to X$.
In this section, we consider the pullback $\mathbb Z_p^d$-tower over $Y$
defined by the voltage assignment
\[
  \alpha\circ\pi_E:E(Y)\longrightarrow\Gamma.
\]
The weight on \(Y\) is the pullback weight 
\[ w(\tilde e)=w(\pi_E(\tilde e)). \]
Thus the spectral element of $Y$ is
\[
\Fcal_Y(A,T)
=
\det\left(
A^2I_{m|G|}
-
AW^Y_{\tau,\alpha\circ\pi_E}
+
D^W(Y)-I_{m|G|}
\right).
\]
Here
\[
W^Y_{\tau,\alpha\circ\pi_E}
=
\left(
\sum_{\tilde e:\tilde v_i\to\tilde v_j}
 w(\tilde e)\tau(\alpha(\pi_E(\tilde e)))
\right)_{i,j}.
\]
Before the product ordering $V(Y)=V(X)\times G$ is fixed, this is the
ordinary $m|G|\times m|G|$ weighted adjacency matrix of $Y$.

We order the vertices of $Y=X(G,\beta)$ as
\[
V(Y)=V(X)\times G.
\]
With respect to this ordering, our convention for the regular representation
gives the block expression
\[
W^Y_{\tau,\alpha\circ\pi_E}
=\left(
\sum_{e:v_i\to v_j} w(e)\tau(\alpha(e))\Reg(\beta(e))
\right)_{1\le i,j\le m}.
\]
The regular representation part of the contribution of an
edge \(e\) is \(\Reg(\beta(e))\).
If one uses the opposite row-column convention, then
\[
\Reg(\beta(e)^{-1})
\]
appears instead. This replaces every irreducible representation $\rho$ by
its dual representation $\rho^\vee$. Since $\rho\mapsto\rho^\vee$ is a
permutation of $\widehat G$ and $d_{\rho^\vee}=d_\rho$, the product
formula below is independent of this convention.

\subsection{Twisted spectral elements}\label{subsec:twisted-spectral-elements}

Let \(K/\mathbb Q_p\) be a finite extension containing the weights and
over which \(G\) splits. When a monic polynomial \(P(A)\) is fixed below,
we assume that its coefficients lie in \(K\).
Lemma~\ref{lem:scalar-extension} allows this finite scalar extension
without changing the normalized $\mu$-invariant or the $\lambda$-invariant
used below.
For $\rho\in\widehat G$, put $d_\rho=\dim\rho$. Define the $\rho$-twisted
weighted adjacency matrix by
\[
(W_{\rho,\tau,\alpha})_{ij}
=
\sum_{e:v_i\to v_j}
 w(e)\tau(\alpha(e))\rho(\beta(e)).
\]
This is an $m\times m$ block matrix whose entries are $d_\rho\times
d_\rho$ matrices.

We define the $\rho$-twisted spectral element by
\[
\Fcal_{X,\rho}(A,T)
=
\det\left(
A^2I_{md_\rho}
-
AW_{\rho,\tau,\alpha}
+
(D^W(X)-I_m)\otimes I_{d_\rho}
\right).
\]
For a monic polynomial $P(A)\in K[A]$, define
\[
\Rcal_{X,P,\rho}(T)
=
\Res_A
\bigl(\Fcal_{X,\rho}(A,T),P(A)\bigr).
\]
Likewise, set
\[
\Rcal_{Y,P}(T)=\Res_A(\Fcal_Y(A,T),P(A)).
\]

\begin{proposition}[Formal equivariant spectral factorization]\label{prop:equiv-factor}
With the notation above, one has
\[
\Fcal_Y(A,T)
=
\prod_{\rho\in\widehat G}
\Fcal_{X,\rho}(A,T)^{d_\rho}.
\]
Consequently, for every monic polynomial $P(A)\in K[A]$,
\[
\Rcal_{Y,P}(T)
=
\prod_{\rho\in\widehat G}
\Rcal_{X,P,\rho}(T)^{d_\rho}.
\]
\end{proposition}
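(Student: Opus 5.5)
The plan is to block-diagonalize the matrix $A^2I_{m|G|}-AW^Y_{\tau,\alpha\circ\pi_E}+D^W(Y)-I_{m|G|}$ by a constant change of basis that decomposes the regular representation. First I will record that the pullback weight gives $D^W(Y)=D^W(X)\otimes I_{|G|}$. Indeed, each vertex $(v_i,g)$ of $Y$ has outgoing edges $(e,g)$ for $o(e)=v_i$, with the same weights $w(e)$. Hence, in the ordering $V(Y)=V(X)\times G$, the whole matrix is the $m\times m$ block matrix whose $(i,j)$ block is
\[
\delta_{ij}\bigl(A^2+(D^W(X)-I_m)_{ii}\bigr)I_{|G|}-A\sum_{e:v_i\to v_j}w(e)\tau(\alpha(e))\Reg(\beta(e)).
\]

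Since $K$ is a splitting field for $G$ and has characteristic zero, Maschke and Wedderburn give a matrix $Q\in\mathrm{GL}_{|G|}(K)$ with
\[
Q\,\Reg(g)\,Q^{-1}=\bigoplus_{\rho\in\widehat G}\rho(g)^{\oplus d_\rho}\qquad (g\in G),
\]
where each $\rho$ is realized over $K$. Conjugating by $I_m\otimes Q$ fixes the scalar blocks. It sends each block $\sum_e w(e)\tau(\alpha(e))\Reg(\beta(e))$ to the direct sum over $\rho$ of $d_\rho$ copies of $\sum_e w(e)\tau(\alpha(e))\rho(\beta(e))$, because the coefficients $w(e)\tau(\alpha(e))$ are scalars in $\Lambda_{K,d}$ that commute with $Q$. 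I will then apply a permutation matrix that regroups the index set $\{1,\dots,m\}\times\bigsqcup_\rho\{1,\dots,d_\rho\}^{d_\rho}$ by the pair ($\rho$, copy). This makes the matrix block diagonal, with $d_\rho$ diagonal blocks equal to
\[
A^2I_{md_\rho}-AW_{\rho,\tau,\alpha}+(D^W(X)-I_m)\otimes I_{d_\rho}.
\]
Conjugation and permutation do not change the determinant over $\Lambda_{K,d}[A]$, so the first identity follows. The main point requiring care is this index bookkeeping: checking that the permutation really produces the $m\times m$ block structure with $d_\rho\times d_\rho$ entries matching the definition of $W_{\rho,\tau,\alpha}$. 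The choice of row-column convention has already been addressed in the text, since dualizing $\rho$ permutes $\widehat G$.

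For the resultant identity, each $\Fcal_{X,\rho}$ is monic in $A$ of degree $2md_\rho$. This follows because the $A^2$ term is $A^2I_{md_\rho}$ and the other terms have lower $A$-degree. With the convention $\Res_A(f,g)=\prod_{f(\alpha)=0}g(\alpha)$ for monic $f$, the resultant is multiplicative in the first argument: $\Res_A(f_1f_2,P)=\Res_A(f_1,P)\Res_A(f_2,P)$. I will justify this either over an algebraic closure of the fraction field of $\Lambda_{K,d}$, using the root description, or via the dual expression $\Res_A(P,\cdot)$ as the determinant of multiplication on $\Lambda_{K,d}[A]/(P)$, as in Proposition~\ref{prop:resultant-module}. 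Even degrees make the signs trivial, as in Lemma~\ref{lem:R-root-expression}. Applying $\Res_A(\cdot,P)$ to the first identity then gives $\Rcal_{Y,P}=\prod_\rho\Rcal_{X,P,\rho}^{d_\rho}$.
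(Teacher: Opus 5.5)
Your proposal is correct and takes essentially the same route as the paper. Both arguments pull back $D^W(Y)=D^W(X)\otimes I_{|G|}$, decompose the regular representation by a constant change of basis acting only on the group coordinate, read off $d_\rho$ copies of each twisted block, and deduce the resultant identity from multiplicativity of the resultant. You are more explicit than the paper about the regrouping permutation and about why multiplicativity holds with trivial signs (even $A$-degrees), which the paper leaves implicit.
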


\begin{proof}
Since the cover is unramified and the weight is pulled back, we have
\[
D^W(Y)=D^W(X)\otimes I_{|G|}.
\]
With the convention fixed above, the universal weighted adjacency matrix
of $Y$ is
\[W^Y_{\tau,\alpha\circ\pi_E}
=
\sum_{1\le i,j\le m}
\sum_{e:v_i\to v_j}
w(e)\tau(\alpha(e))
E_{ij}\otimes\Reg(\beta(e)).\]
Here $E_{ij}$ denotes the $m\times m$ matrix unit for the vertex
coordinate, while $\Reg(\beta(e))$ acts on the group-coordinate space
$K[G]$, whose basis is ordered by the group coordinate in
$V(Y)=V(X)\times G$.
Therefore the matrix defining $\Fcal_Y(A,T)$ is
\[
A^2I_{m|G|}
-
AW^Y_{\tau,\alpha\circ\pi_E}
+
(D^W(X)-I_m)\otimes I_{|G|}.
\]

Since $K$ is a splitting field for $G$, the regular representation
decomposes as
\[
\Reg\simeq\bigoplus_{\rho\in\widehat G}\rho^{\oplus d_\rho}.
\]
The corresponding change of basis is applied only to the group-coordinate
factor $K[G]$; hence it commutes with the $A^2$-term and with the
$(D^W(X)-I_m)$-term on the vertex coordinate. This change of basis is
constant, independent of $A$ and $T$, and therefore it preserves the
determinant as a power series in $A$ and $T$.  After this constant change
of basis, the above matrix becomes block diagonal. For each $\rho\in\widehat G$, the block
\[
A^2I_{md_\rho}
-
AW_{\rho,\tau,\alpha}
+
(D^W(X)-I_m)\otimes I_{d_\rho}
\]
appears $d_\rho$ times. Taking determinants gives
\[
\Fcal_Y(A,T)
=
\prod_{\rho\in\widehat G}
\Fcal_{X,\rho}(A,T)^{d_\rho}.
\]
The identity for resultants follows from multiplicativity of the
resultant.
\end{proof}

\subsection{Reduction of \texorpdfstring{$p$}{p}-group twists}\label{subsec:pgroup-reduction}

Assume from now on that $G$ is a finite $p$-group. Let
$\mathcal O_K$ be the valuation ring of $K$, let $\mathfrak m$
be its maximal ideal, and put $k=\mathcal O_K/\mathfrak m$.

We impose the integrality assumptions
\[
w(e)=\frac{2}{d_{o(e)}}\in\mathcal O_K\qquad(e\in E(X)),
\]
and
\[
P(A)\in\mathcal O_K[A]\quad\text{monic}.
\]

\begin{lemma}[Reduction of $p$-group twists]\label{lem:pgroup-reduction}
Let bars denote reduction modulo $\mathfrak m$. For every irreducible
$K$-representation $\rho$ of $G$, one has, in
$k[[T_1,\ldots,T_d]][A]$,
\[
\overline{\Fcal_{X,\rho}}(A,T)
=
\overline{\Fcal_X}(A,T)^{d_\rho}.
\]
Consequently, in $k[[T_1,\ldots,T_d]]$,
\[
\overline{\Rcal_{X,P,\rho}}(T)
=
\overline{\Rcal_{X,P}}(T)^{d_\rho}.
\]
\end{lemma}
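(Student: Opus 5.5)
Fix an irreducible $K$-representation $\rho$ and realize it over $\mathcal O_K$: since $G$ is finite, $\rho$ stabilizes a $G$-stable $\mathcal O_K$-lattice in $K^{d_\rho}$ (take the $\mathcal O_K$-span of the $G$-translates of any lattice). Choosing a basis of this lattice, we may assume $\rho(g)\in\operatorname{GL}_{d_\rho}(\mathcal O_K)$ for all $g\in G$. Conjugating $\rho$ by a constant invertible matrix leaves $\Fcal_{X,\rho}$ unchanged. Together with the integrality assumption $w(e)\in\mathcal O_K$ and $\tau(\gamma)\in\mathbb Z_p[[T_1,\ldots,T_d]]$ (as in Lemma~\ref{lem:admissibility}), this shows that $\Fcal_{X,\rho}$ and $\Fcal_X$ lie in $\mathcal O_K[[T_1,\ldots,T_d]][A]$, so the reductions make sense.

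Next we use the $p$-group input. The reduction $\overline\rho:G\to\operatorname{GL}_{d_\rho}(k)$ is a representation of a $p$-group in characteristic $p$, so its only irreducible constituent is the trivial one. Hence $\overline\rho$ has a $G$-stable complete flag, and after a change of basis in $\operatorname{GL}_{d_\rho}(k)$ every $\overline\rho(g)$ is upper unitriangular. This is the key step. It can be justified by noting that $V^G\neq0$ for any nonzero $k[G]$-module $V$ (orbit counting on a finite $\mathbb F_p$-span), and then inducting on dimension.

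In the conjugated basis, $\overline{W_{\rho,\tau,\alpha}}=\sum_{e}\overline{w(e)}\,\overline{\tau(\alpha(e))}\,E_{ij}\otimes\overline\rho(\beta(e))$. Reorder coordinates so that the flag index is the outer index and the vertex index is the inner one. The matrix
\[
A^2I_{md_\rho}-A\overline{W_{\rho,\tau,\alpha}}+(\overline{D^W(X)}-I_m)\otimes I_{d_\rho}
\]
then becomes block upper triangular with $d_\rho$ diagonal blocks. Each diagonal block is $A^2I_m-A\overline{W_{\tau,\alpha}}+\overline{D^W(X)}-I_m$, since the diagonal entries of $\overline\rho(g)$ are all $1$. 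The determinant of a block triangular matrix over the commutative ring $k[[T]][A]$ is the product of the determinants of its diagonal blocks. Reduction commutes with taking determinants. Together these give $\overline{\Fcal_{X,\rho}}=\overline{\Fcal_X}^{\,d_\rho}$.

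For resultants, $P$ is monic with coefficients in $\mathcal O_K$, and the $\Fcal$'s are monic in $A$. The resultant is therefore a polynomial in the coefficients (a Sylvester determinant), so it commutes with reduction. Concretely, $\overline{\Rcal_{X,P,\rho}}=\Res_A(\overline{\Fcal_{X,\rho}},\overline P)$. Multiplicativity of $\Res_A(\cdot,\overline P)$ in its first monic argument then gives $\Res_A(\overline{\Fcal_X}^{\,d_\rho},\overline P)=\overline{\Rcal_{X,P}}^{\,d_\rho}$. The step needing most care is the integral realization of $\rho$ and the triangularization of $\overline\rho$; once these are in place, everything else is formal.
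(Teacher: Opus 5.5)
Your proposal is correct and follows essentially the same route as the paper. Both arguments pass to a $G$-stable $\mathcal O_K$-lattice, use that the reduction of $\rho$ has only trivial composition factors (you justify this via $V^G\neq0$, the paper via locality of $k[G]$) to get a block upper-triangular matrix with $d_\rho$ diagonal blocks equal to the untwisted one, and finish with compatibility of the Sylvester resultant with reduction and multiplicativity in the first argument.
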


\begin{proof}
Let $V_\rho$ be the representation space of $\rho$.  Choose an
arbitrary $\mathcal O_K$-lattice $L^{\mathrm{init}}\subset V_\rho$, and set
\[
L=\sum_{g\in G}\rho(g)L^{\mathrm{init}}.
\]
Then $L$ is a $G$-stable $\mathcal O_K$-lattice. Since $\mathcal O_K$ is a
discrete valuation ring, $L$ is free of rank $d_\rho$. After choosing an
$\mathcal O_K$-basis of $L$, we may assume that
\[
\rho(g)\in\operatorname{GL}_{d_\rho}(\mathcal O_K)
\qquad(g\in G).
\]

Reducing modulo $\mathfrak m$, we obtain a $k[G]$-module. The reduction
of an irreducible characteristic-zero representation need not be
irreducible. However, since $G$ is a finite $p$-group and $\operatorname{char} k=p$,
the group algebra $k[G]$ is local. Hence every simple $k[G]$-module is
trivial, and the reduced representation admits a composition series whose
quotients are all trivial.

Let
\[
0=M_0\subset M_1\subset\cdots\subset M_{d_\rho}=L/\mathfrak mL
\]
be a composition series of the reduced representation, with trivial
one-dimensional quotients.  The tensor product
$k^m\otimes_k(L/\mathfrak mL)$ is filtered by
$k^m\otimes_k M_j$.  With respect to a basis adapted to this filtration,
the matrix $\overline{W_{\rho,\tau,\alpha}}$ is upper block triangular
with $d_\rho$ diagonal blocks.  Each diagonal block acts on the vertex
coordinate and is equal to $\overline{W_{\tau,\alpha}}$. Therefore
\[
A^2I_{md_\rho}
-
A\overline{W_{\rho,\tau,\alpha}}
+
\overline{(D^W(X)-I_m)}\otimes I_{d_\rho}
\]
is upper block triangular with $d_\rho$ diagonal blocks, each equal to
\[
A^2I_m
-
A\overline{W_{\tau,\alpha}}
+
\overline{(D^W(X)-I_m)}.
\]
Taking determinants gives
\[
\overline{\Fcal_{X,\rho}}(A,T)
=
\overline{\Fcal_X}(A,T)^{d_\rho}.
\]

Since $P(A)$ is monic and integral, and since, with respect to the chosen
$G$-stable lattice,
\[
\Fcal_{X,\rho}(A,T)\in\mathcal O_K[[T_1,\ldots,T_d]][A],
\]
the Sylvester matrix defining the resultant has entries in
$\mathcal O_K[[T_1,\ldots,T_d]]$. Therefore reduction modulo $\mathfrak m$
commutes with taking the resultant:
\[
\overline{\Res_A(\Fcal_{X,\rho},P)}
=
\Res_A(\overline{\Fcal_{X,\rho}},\overline P).
\]
Using the first part of the lemma, already stated modulo $\mathfrak m$, and multiplicativity of the resultant,
we get
\[
\begin{aligned}
\overline{\Rcal_{X,P,\rho}}
&=\Res_A(\overline{\Fcal_X}^{\,d_\rho},\overline P) \\
&=\Res_A(\overline{\Fcal_X},\overline P)^{d_\rho} \\
&=\overline{\Rcal_{X,P}}^{\,d_\rho}.
\end{aligned}
\]
\end{proof}

\begin{theorem}[Unramified equivariant quantum Kida formula]\label{thm:kida}
Let $\pi:Y\to X$ be a finite connected unramified Galois cover with Galois group
$G$. Assume that $G$ is a finite $p$-group. Let $K/\mathbb Q_p$ be a
finite extension containing all weights and over which $G$ splits.
Let $P(A)\in\mathcal O_K[A]$ be monic.
Assume
\[
 w(e)=\frac{2}{d_{o(e)}}\in\mathcal O_K\qquad(e\in E(X)).
\]
Assume also that
\[
\Rcal_{X,P}\ne0,
\qquad
\Rcal_{Y,P}\ne0.
\]
Then
\[
\mu(\Rcal_{X,P})=0
\quad\Longleftrightarrow\quad
\mu(\Rcal_{Y,P})=0.
\]
Moreover, if this equivalent condition holds, then
\[
\lambda(\Rcal_{Y,P})=[Y:X]\lambda(\Rcal_{X,P}).
\]
\end{theorem}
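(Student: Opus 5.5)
The plan is to combine the formal equivariant factorization of Proposition~\ref{prop:equiv-factor} with the mod-$\mathfrak m$ congruence of Lemma~\ref{lem:pgroup-reduction}. Together they give a congruence between $\Rcal_{Y,P}$ and a power of $\Rcal_{X,P}$ in $k[[T_1,\ldots,T_d]]$.

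\emph{Integrality.} Since $w(e)\in\mathcal O_K$ and $\tau(\gamma)\in\mathbb Z_p[[T_1,\ldots,T_d]]$, we have $\Fcal_X\in\mathcal O_K[[T]][A]$, so $\Rcal_{X,P}\in\mathcal O_K[[T]]$. After choosing $G$-stable lattices as in the proof of Lemma~\ref{lem:pgroup-reduction}, each $\Rcal_{X,P,\rho}$ also lies in $\mathcal O_K[[T]]$. The pullback weights are integral as well, so $\Rcal_{Y,P}\in\mathcal O_K[[T]]$. For such an integral nonzero element $F$, the admissible factorization has $N(F)\ge0$. Moreover, $\mu(F)=0$ if and only if $\overline F\neq0$, and in that case $\lambda(F)=\lambda_k(\overline F)$.

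\emph{Congruence.} Reduce the identity $\Rcal_{Y,P}=\prod_\rho\Rcal_{X,P,\rho}^{d_\rho}$ modulo $\mathfrak m$ and apply Lemma~\ref{lem:pgroup-reduction} to each factor. Using $\sum_\rho d_\rho^2=|G|=[Y:X]$, this gives
\[
\overline{\Rcal_{Y,P}}=\prod_{\rho}\overline{\Rcal_{X,P}}^{\,d_\rho^2}=\overline{\Rcal_{X,P}}^{\,[Y:X]}
\]
in $k[[T_1,\ldots,T_d]]$. The one point to watch is that reduction of the twisted resultants is taken with respect to the $G$-stable lattice bases. Changing the basis by a conjugation in $\operatorname{GL}(K)$ does not change $\Fcal_{X,\rho}$, so $\Rcal_{X,P,\rho}$ is intrinsically defined and integral. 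Hence the reduction is well defined and the congruence is legitimate.

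\emph{Conclusion.} Since $k[[T]]$ is a domain, $\overline{\Rcal_{X,P}}\ne0$ if and only if $\overline{\Rcal_{X,P}}^{\,[Y:X]}\ne0$. By the congruence, this holds if and only if $\overline{\Rcal_{Y,P}}\neq0$, which gives $\mu(\Rcal_{X,P})=0\iff\mu(\Rcal_{Y,P})=0$. When these hold, $\lambda_k$ is additive on products because it is a sum of orders at height-one primes in the UFD $k[[T]]$. Therefore
\[
\lambda(\Rcal_{Y,P})=\lambda_k\bigl(\overline{\Rcal_{X,P}}^{\,[Y:X]}\bigr)=[Y:X]\lambda_k(\overline{\Rcal_{X,P}})=[Y:X]\lambda(\Rcal_{X,P}).
\]
Lemma~\ref{lem:scalar-extension} justifies enlarging $K$ so that $G$ splits, without affecting the invariants.

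The main obstacle is the integrality bookkeeping in the first step, specifically checking that $\mu$ is computed from integral representatives so that $\mu=0$ is equivalent to nonvanishing of the reduction. The nonvanishing hypotheses $\Rcal_{X,P}\ne0$ and $\Rcal_{Y,P}\ne0$ are needed only to make the invariants defined.
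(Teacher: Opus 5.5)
Your proposal is correct and follows essentially the same route as the paper. Both arguments reduce the factorization $\Rcal_{Y,P}=\prod_\rho\Rcal_{X,P,\rho}^{d_\rho}$ modulo $\mathfrak m$, apply Lemma~\ref{lem:pgroup-reduction} together with $\sum_\rho d_\rho^2=|G|$ to obtain $\overline{\Rcal_{Y,P}}=\overline{\Rcal_{X,P}}^{\,|G|}$, and then read off $\mu$ and $\lambda$ in the domain $k[[T_1,\ldots,T_d]]$. Your remark that $\Fcal_{X,\rho}$ is unchanged under conjugation of $\rho$, so the twisted resultants are intrinsically integral, makes explicit a point the paper leaves implicit in its choice of a $G$-stable lattice.
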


\begin{proof}
By the integrality assumptions on the weights and on \(P\), the
resultants \(\Rcal_{X,P}\) and \(\Rcal_{Y,P}\) lie in
\(\mathcal O_K[[T_1,\ldots,T_d]]\).
By Proposition~\ref{prop:equiv-factor},
\[
\Rcal_{Y,P}=\prod_{\rho\in\widehat G}\Rcal_{X,P,\rho}^{d_\rho}.
\]
By Lemma~\ref{lem:pgroup-reduction},
\[
\overline{\Rcal_{X,P,\rho}}=\overline{\Rcal_{X,P}}^{d_\rho}.
\]
Therefore, in
\[
k[[T_1,\ldots,T_d]]
=\mathcal O_K[[T_1,\ldots,T_d]]/\mathfrak m,
\]
we have
\[
\overline{\Rcal_{Y,P}}
=
\prod_{\rho\in\widehat G}
\left(\overline{\Rcal_{X,P}}^{d_\rho}\right)^{d_\rho}
=
\overline{\Rcal_{X,P}}^{\sum_\rho d_\rho^2}
=
\overline{\Rcal_{X,P}}^{|G|}.
\]
Since $k[[T_1,\ldots,T_d]]$ is an integral domain,
\[
\overline{\Rcal_{Y,P}}\ne0
\quad\Longleftrightarrow\quad
\overline{\Rcal_{X,P}}\ne0.
\]
For an integral nonzero power series
$F\in\mathcal O_K[[T_1,\ldots,T_d]]$, we have
\[
\mu(F)=0
\quad\Longleftrightarrow\quad
\overline F\ne0.
\]
Indeed, $\mu(F)>0$ is equivalent to all coefficients of $F$ being
divisible by $\varpi$, which is equivalent to $\overline F=0$. Hence
\[
\mu(\Rcal_{X,P})=0
\quad\Longleftrightarrow\quad
\mu(\Rcal_{Y,P})=0.
\]
Assume now that this equivalent condition holds.  Since
$k[[T_1,\ldots,T_d]]$ is a unique factorization domain, the residue-field
divisor sum satisfies
\[
\lambda_k(\overline F^{\,r})=r\lambda_k(\overline F)
\]
for every nonzero $\overline F\in k[[T_1,\ldots,T_d]]$ and every
positive integer $r$.  Moreover, if $F\in\mathcal O_K[[T_1,\ldots,T_d]]$
and $\mu(F)=0$, then $\lambda(F)=\lambda_k(\overline F)$.  Therefore
\[
\lambda(\Rcal_{Y,P})
=
\lambda_k(\overline{\Rcal_{Y,P}})
=
\lambda_k(\overline{\Rcal_{X,P}}^{|G|})
=
|G|\lambda_k(\overline{\Rcal_{X,P}})
=
[Y:X]\lambda(\Rcal_{X,P}).
\]

\end{proof}

\begin{remark}[Algebraic and asymptotic \(\lambda\)-invariants]
The identity
\[
\lambda(\Rcal_{Y,P})=[Y:X]\lambda(\Rcal_{X,P})
\]
in Theorem~\ref{thm:kida} is an equality of algebraic invariants of
power series. It governs the leading asymptotics of
\(v_p(\det P(U_n))\) only in the torsion non-vanishing regime of
Theorem~\ref{thm:resultant-iwasawa}.  When torsion zeros occur, as in the torsion-zero packet in
Section~\ref{subsec:heisenberg-application}, the algebraic identity
persists, but the finite-valued growth formula is replaced by the
exceptional-zero interpretation of Section~\ref{sec:exceptional}.
\end{remark}

\begin{remark}[Connectedness]
The algebraic determinant identities and the equivariant factorization
above are formulated directly in terms of the covering data. In the usual
graph-Iwasawa theoretic setting one often works with connected towers, and
all examples below satisfy this condition.
\end{remark}

\begin{remark}[No $d=1$ correction]
In the weighted complexity Kida formula \cite[Theorem~4.1]{AMT2026}, the case $d=1$ has an additional
correction term. This comes from the forced zero at the trivial character
of the Laplacian-type characteristic element. In the torsion non-vanishing regime of Theorem~\ref{thm:resultant-iwasawa},
for instance when the unit criterion of Proposition~\ref{prop:unit-criterion}
applies, i.e. when $\Rcal_{X,P}(0)\in\mathcal O_K^\times$, there is no forced
trivial-character zero. Consequently the Kida formula takes the pure form
\[
\lambda(\Rcal_{Y,P})=[Y:X]\lambda(\Rcal_{X,P})
\]
also when $d=1$.
\end{remark}

\begin{remark}[Non-integral Grover weights]
The integrality condition in Theorem~\ref{thm:kida} is precisely the
condition needed for the mod $p$ reduction argument. The spectral
resultant itself may be formed over $\Lambda_{K,d}$ after enlarging $K$.
\end{remark}

\begin{remark}[Bass-corrected $\mu$-invariants]
Assume the hypotheses of Theorem~\ref{thm:kida}, and suppose in addition
that $P(1)P(-1)\ne0$ and $\mu(\Rcal_{X,P})=0$. Then the Bass-corrected
$\mu$-invariants also scale:
\[
\mu^{\mathrm{qw}}_{Y,P}=[Y:X]\mu^{\mathrm{qw}}_{X,P}.
\]
Indeed,
\[
\mu^{\mathrm{qw}}_{X,P}
=-\chi(X)v_p(\Res_A(A^2-1,P)),
\]
and $\chi(Y)=[Y:X]\chi(X)$.
\end{remark}

\section{Exceptional Zeros and Eigenvalue Occurrence}
\label{sec:exceptional}

In this section we explain the spectral meaning of the failure of the
torsion non-vanishing condition in Theorem~\ref{thm:resultant-iwasawa}. This is
a spectral-resultant analogue of the torsion-zero phenomenon mentioned in
\cite[Remark~6.1]{AMT2026}; in the spectral setting, such zeros have a direct
interpretation as prescribed Grover eigenvalue occurrences after
separating the Bass factor.

\subsection{\texorpdfstring{$P$}{P}-spectral multiplicities}\label{subsec:p-spectral-multiplicities}

Let $P(A)\in K[A]$ be monic. After replacing $K$ by a finite extension if
necessary, write
\[
P(A)=\prod_{\theta\in\Theta(P)}(A-\theta)^{e_\theta},
\]
where $\Theta(P)$ is the set of distinct roots of $P$. For a polynomial
$F(A)$, define the $P$-weighted order by
\[
\ord_P^*F=\sum_{\theta\in\Theta(P)}e_\theta\ord_{A=\theta}F(A).
\]
When $P$ is squarefree, this agrees with the unweighted sum over the roots
of $P$. Let
\[
C_n(A)=\det(AI_{2lq_n}-U_n).
\]
Define the $P$-spectral multiplicity at level $n$ by
\[
m_n(P)=\ord_P^*C_n(A)
=\sum_{\theta\in\Theta(P)}e_\theta\ord_{A=\theta}C_n(A).
\]
Thus $m_n(P)$ is the $P$-weighted total algebraic multiplicity of the
roots of $P$ among the eigenvalues of $U_n$.

\begin{theorem}[Multiplicity formula]\label{thm:multiplicity}
Let $P(A)\in K[A]$ be monic. Then
\[
 m_n(P)
 =
 -q_n\chi(X)\ord_P^*(A^2-1)
 +
 \sum_{\zeta\in W_n^d}\ord_P^*\Fcal_X(A,\zeta-1).
\]
In particular, if $\gcd(P,A^2-1)=1$, then
\[
 m_n(P)
 =
 \sum_{\zeta\in W_n^d}\ord_P^*\Fcal_X(A,\zeta-1).
\]
\end{theorem}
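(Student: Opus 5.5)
The plan is to take the $P$-weighted order of both sides of the universal spectral factorization (Theorem~\ref{thm:universal-factorization}), viewed as an identity in $\overline{\Qp}(A)$. For each distinct root $\theta\in\Theta(P)$, the order map $\ord_{A=\theta}$ is a group homomorphism from $\overline{\Qp}(A)^\times$ to $\mathbb Z$. This is exactly Corollary~\ref{cor:point-multiplicity} at $a=\theta$:
\[
\ord_{A=\theta}C_n(A)=-q_n\chi(X)\ord_{A=\theta}(A^2-1)+\sum_{\zeta\in W_n^d}\ord_{A=\theta}\Fcal_X(A,\zeta-1).
\]
Multiplying by $e_\theta$ and summing over $\theta\in\Theta(P)$ gives the first formula, by the definitions of $\ord_P^*$ and $m_n(P)$.

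Two points need care. The first is that every factor must be a nonzero element of $\overline{\Qp}(A)$, so that orders are finite.
\begin{itemize}
\item $C_n(A)$ is monic of degree $2lq_n$.
\item $A^2-1$ is nonzero.
\item Each specialization $\Fcal_X(A,\zeta-1)=\det((A^2+1)I_m-AW_\psi)$ is monic of degree $2m$ in $A$, since the specialization only affects lower-order coefficients.
\end{itemize}
Hence no factor vanishes identically. The second point is the case $\theta=\pm1$. There the Bass term contributes a genuine negative order, and the identity still holds because $\ord$ is additive on the multiplicative group of the rational function field. This is the interpretation already recorded in Corollary~\ref{cor:point-multiplicity}. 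The factorization is first established over a finite extension containing $W_n$ and the roots of $P$. Working in $\overline{\Qp}(A)$ throughout avoids any descent issue, since orders at points do not change under field extension.

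For the second statement, assume $\gcd(P,A^2-1)=1$. Then no root of $P$ equals $\pm1$, so $\ord_{A=\theta}(A^2-1)=0$ for every $\theta\in\Theta(P)$. It follows that $\ord_P^*(A^2-1)=0$, the Bass term drops out, and the stated formula remains.

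I expect no real obstacle. The only step needing attention is justifying additivity of orders in the presence of the rational factor $(A^2-1)^{-q_n\chi(X)}$, and this is handled by working in $\overline{\Qp}(A)^\times$ and using the nonvanishing of the factors noted above.
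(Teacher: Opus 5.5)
Your proposal is correct and is essentially the paper's own proof: take $\ord_{A=\theta}$ of the universal spectral factorization (equivalently, Corollary~\ref{cor:point-multiplicity} at $a=\theta$), weight by $e_\theta$, sum over $\Theta(P)$, and note that coprimality with $A^2-1$ forces $\ord_P^*(A^2-1)=0$. One small correction to your commentary: at $\theta=\pm1$ the Bass term $-q_n\chi(X)\ord_{A=\theta}(A^2-1)$ is negative only when $\chi(X)>0$, and it is positive in the typical case $\chi(X)<0$; since you argue via additivity of $\ord$ on $\overline{\Qp}(A)^\times$, the sign plays no role.
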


\begin{proof}
Work over a splitting field of \(P\). Taking the order at \(A=\theta\) in
the identity of Theorem~\ref{thm:universal-factorization}, multiplying by
\(e_\theta\), and summing over all \(\theta\in\Theta(P)\), gives
\[
 m_n(P)
 =
 -q_n\chi(X)\ord_P^*(A^2-1)
 +
 \sum_{\zeta\in W_n^d}\ord_P^*\Fcal_X(A,\zeta-1).
\]
If \(\gcd(P,A^2-1)=1\), then \(\ord_P^*(A^2-1)=0\), and the second formula
follows.
\end{proof}

\begin{theorem}[Exceptional zero and finite-level spectral occurrence]\label{thm:exceptional}
Assume that
\[
\gcd(P,A^2-1)=1.
\]
Then
\[
\det P(U_n)=0
\]
if and only if there exists $\zeta\in W_n^d$ such that
\[
\Rcal_{X,P}(\zeta-1)=0.
\]
\end{theorem}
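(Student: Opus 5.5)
The plan is to reduce both sides of the equivalence to a statement about roots of $P$ and then apply the multiplicity formula of Theorem~\ref{thm:multiplicity}. Work over a splitting field $L$ of $P$ containing $W_n$, and write $P(A)=\prod_{\theta\in\Theta(P)}(A-\theta)^{e_\theta}$ with all $e_\theta\ge1$. The left-hand side is handled by Lemma~\ref{lem:even-degree}: since $C_n(A)=\det(AI_{2lq_n}-U_n)$ has even degree $2lq_n$,
\[
\det P(U_n)=\prod_{\theta\in\Theta(P)}C_n(\theta)^{e_\theta}.
\]
Hence $\det P(U_n)=0$ if and only if $\ord_{A=\theta}C_n(A)>0$ for some $\theta\in\Theta(P)$. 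Because every $e_\theta\ge1$ and all orders are nonnegative (as $C_n$ is a polynomial), this is equivalent to $m_n(P)>0$.

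The right-hand side is handled by Lemma~\ref{lem:R-root-expression}. For each $\zeta\in W_n^d$,
\[
\Rcal_{X,P}(\zeta-1)=\prod_{\theta\in\Theta(P)}\Fcal_X(\theta,\zeta-1)^{e_\theta},
\]
where specialization $T\mapsto\zeta-1$ commutes with the finite product, since the identity holds in $\Lambda_{K,d}$ extended by $L$. So $\Rcal_{X,P}(\zeta-1)=0$ if and only if some $\theta$ is a root of the polynomial $\Fcal_X(A,\zeta-1)$. That polynomial is monic of degree $2m$ in $A$, hence nonzero, so its order at $\theta$ is a well-defined nonnegative integer. Thus the existence of a $\zeta\in W_n^d$ with $\Rcal_{X,P}(\zeta-1)=0$ is equivalent to
\[
\sum_{\zeta\in W_n^d}\ord_P^*\Fcal_X(A,\zeta-1)>0.
\]

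The two sides are then linked by Theorem~\ref{thm:multiplicity}. Under $\gcd(P,A^2-1)=1$, its second formula gives
\[
m_n(P)=\sum_{\zeta\in W_n^d}\ord_P^*\Fcal_X(A,\zeta-1).
\]
Positivity of one side is positivity of the other, which proves the equivalence.

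The only delicate point is that the Bass factor must contribute no order at any root $\theta$. A cancellation between the pole of $(A^2-1)^{-q_n\chi(X)}$ and zeros of the product could otherwise break the equivalence. The coprimality hypothesis gives $\theta\ne\pm1$, which is exactly what rules this out, and it is also what makes the pointwise evaluation in Theorem~\ref{thm:universal-factorization} legitimate. I would state this explicitly rather than only citing the multiplicity formula.
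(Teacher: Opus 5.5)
Your proof is correct and follows the paper's argument. Both sides are reduced to positivity of $m_n(P)$ and of $\sum_{\zeta\in W_n^d}\ord_P^*\Fcal_X(A,\zeta-1)$, which coincide by Theorem~\ref{thm:multiplicity} under $\gcd(P,A^2-1)=1$; you just make explicit, via Lemma~\ref{lem:even-degree} and Lemma~\ref{lem:R-root-expression}, the two equivalences that the paper states in one line each.
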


\begin{proof}
The resultant $\Rcal_{X,P}(\zeta-1)$ vanishes if and only if $P(A)$ and
$\Fcal_X(A,\zeta-1)$ have a common root. Since $\gcd(P,A^2-1)=1$,
Theorem~\ref{thm:multiplicity} gives
\[
 m_n(P)=\sum_{\zeta\in W_n^d}\ord_P^*\Fcal_X(A,\zeta-1).
\]
Thus $m_n(P)>0$ if and only if such a $\zeta$ exists. On the other hand,
$m_n(P)>0$ is equivalent to the occurrence of at least one root of $P$ as
an eigenvalue of $U_n$, which is equivalent to $\det P(U_n)=0$.
\end{proof}

\begin{remark}
Theorem~\ref{thm:exceptional} refines the formal vanishing criterion for
resultants into a finite-level spectral statement. The universal
spectral factorization identifies each torsion specialization
$\Fcal_X(A,\zeta-1)$ with the corresponding contribution to the Grover
spectrum at level $n$.
\end{remark}

\begin{remark}[Conductors]
Under the coprimality assumption of Theorem~\ref{thm:exceptional}, if
\(\zeta\in W^d\) satisfies \(\Rcal_{X,P}(\zeta-1)=0\), then this zero
contributes at every level \(n\) such that \(\zeta\in W_n^d\). Thus a
torsion point \(\zeta\) does not determine a unique finite level. Rather,
it has a conductor and contributes at every level containing that torsion
character.
\end{remark}

\begin{remark}[Periodic spectra]
For periodic spectra one may consider $P(A)=A^r-1$. Since the factors
$A-1$ and $A+1$ interact with the Bass factor $A^2-1$, it is often more
natural to use
\[
P_r^{\mathrm{red}}(A)=\frac{A^r-1}{\gcd(A^r-1,A^2-1)}.
\]
Explicitly,
\[
\gcd(A^r-1,A^2-1)=
\begin{cases}
A^2-1, & r \text{ even},\\
A-1, & r \text{ odd}.
\end{cases}
\]
Then $\gcd(P_r^{\mathrm{red}},A^2-1)=1$, and
Theorem~\ref{thm:exceptional} applies directly.
\end{remark}

\begin{remark}[Torsion-flat structure]
By Monsky's theorem on torsion zeros \cite[Theorem~5.6]{Mon81}, the
torsion zero set of each nonzero power series
\(\partial_A^j\Fcal_X(\theta,T)\) is a finite union of torsion-translated
\(p\)-adic flats.

More precisely, one applies Monsky's theorem to each derivative
\(\partial_A^j\Fcal_X(\theta,T)\) and then takes finite intersections of
the resulting finite unions of flats. If one of the derivatives is
identically zero, its zero set is all of \(W^d\), which is itself regarded
as a flat.  This describes the possible shape of higher-multiplicity
exceptional loci.
\end{remark}

\section{Examples}\label{sec:examples}

In this section we discuss two families of examples. The first family
revisits the $K_3$-tower and shows that the spectral resultant formalism
recovers the cyclotomic structure of the Grover spectrum. The second family
gives concrete applications of the unramified equivariant quantum Kida formula to a
non-abelian $p$-group cover.

\subsection{The \texorpdfstring{$K_3$}{K3}-tower and cyclotomic spectra}\label{subsec:k3-tower}

We first revisit \cite[Example~28]{AMT2026}. The computation there
corresponds to the linear polynomial $P(A)=A-a$; here we keep $A$ as a
variable and obtain a formula for arbitrary monic $P(A)$.

Let $p=2$ and $d=1$. 
In this example, our notation gives
\[ W_n=\mu_{2^n}, \qquad W=\bigcup_{n\ge0}W_n=\mu_{2^\infty}. \]
We consider the base graph of the $K_3$-tower from
\cite[Example~28]{AMT2026}. Put $z=1+T$. For this tower, the spectral
factor is
\[
\Fcal_X(A,T)=(A^3-1)^2-A^3T^2(1+T)^{-1}.
\]
Since
\[
T^2(1+T)^{-1}=(z-1)^2z^{-1}=z+z^{-1}-2,
\]
we obtain
\[
\Fcal_X(A,T)=A^6-A^3(z+z^{-1})+1.
\]
Therefore
\[
\Fcal_X(A,T)=(A^3-z)(A^3-z^{-1}).
\]

Let
\[
N_n=3\cdot 2^n.
\]
At level $n$, the graph is a cycle of length $N_n$. The Grover transition
matrix $U_n$ acts on the directed edges, and hence has size $2N_n$.

\begin{proposition}\label{prop:K3-resultant}
For the $K_3$-tower above, one has
\[
\det(AI_{2N_n}-U_n)=(A^{N_n}-1)^2.
\]
Consequently, for every monic polynomial $P(A)\in K[A]$,
\[
\det P(U_n)=\Res_A(A^{N_n}-1,P(A))^2.
\]
\end{proposition}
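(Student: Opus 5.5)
We plan to apply the universal spectral factorization (Theorem~\ref{thm:universal-factorization}) with $d=1$, $p=2$, $q_n=2^n$, and then recognize the resulting product as a cyclotomic expression. The base graph is $K_3$, so $m=3$, $l=3$ and $\chi(X)=0$. The Bass factor $(A^2-1)^{-q_n\chi(X)}$ is therefore identically $1$. This leaves
\[
\det(AI_{2N_n}-U_n)=\prod_{\zeta\in W_n}\Fcal_X(A,\zeta-1)
=\prod_{\zeta\in\mu_{2^n}}(A^3-\zeta)(A^3-\zeta^{-1}),
\]
using the factorization $\Fcal_X(A,T)=(A^3-z)(A^3-z^{-1})$ with $z=1+T$ specialized at $z=\zeta$.

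Since $\zeta\mapsto\zeta^{-1}$ is a bijection of $\mu_{2^n}$, the product equals $\bigl(\prod_{\zeta\in\mu_{2^n}}(A^3-\zeta)\bigr)^2$. We then apply the identity $\prod_{\zeta\in\mu_{M}}(B-\zeta)=B^{M}-1$ with $B=A^3$ and $M=2^n$. This gives $(A^{3\cdot 2^n}-1)^2=(A^{N_n}-1)^2$, which proves the first claim.

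For the second claim, we write $C_n(A)=(A^{N_n}-1)^2$. Using the resultant convention $\Res_A(C_n,P)=\det P(U_n)$ from Section~\ref{subsec:resultants}, multiplicativity of the resultant in its first argument gives $\det P(U_n)=\Res_A(A^{N_n}-1,P)^2$. If the root-product form is preferred, we can instead use Lemma~\ref{lem:even-degree}, since $2N_n$ is even.

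The only potential obstacle is confirming that the spectral factor quoted from \cite[Example~28]{AMT2026} matches the actual voltage assignment. If that needs checking, we would verify it by computing $\det((A^2+1)I_3-AW_{\tau,\alpha})$ directly for $K_3$. All Grover weights equal $1$, and a single voltage $1$ is placed on one edge. The result should be $A^6-A^3(z+z^{-1})+1$, because the only terms that depend on $z$ come from the two oriented $3$-cycles.

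As an independent consistency check, $X_n$ is a cycle of length $N_n$. The Grover walk on a cycle with all degrees $2$ is a pair of cyclic shifts on the directed edges, so its characteristic polynomial is visibly $(A^{N_n}-1)^2$.
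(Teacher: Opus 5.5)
Your proposal is correct and follows essentially the same route as the paper. You apply the universal spectral factorization with $\chi(X)=0$ and $\Fcal_X=(A^3-z)(A^3-z^{-1})$, use $\prod_{\zeta\in W_n}(A^3-\zeta)=A^{N_n}-1$ together with its $\zeta^{-1}$ version, and then pass to $\det P(U_n)=\Res_A(A^{N_n}-1,P)^2$ by multiplicativity of the resultant. Your direct determinant check of the spectral factor and the cyclic-shift consistency check are both correct, though the paper's argument does not need them.
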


\begin{proof}
By the factorization
\[
\Fcal_X(A,T)=(A^3-z)(A^3-z^{-1}),
\]
the universal spectral factorization gives
\[
\det(AI_{2N_n}-U_n)
=
\prod_{\zeta\in W_n}(A^3-\zeta)(A^3-\zeta^{-1}).
\]
Here the Bass factor does not contribute because $\chi(X)=0$. Since
\[
\prod_{\zeta\in W_n}(A^3-\zeta)
=
A^{3\cdot2^n}-1=A^{N_n}-1,
\]
and the same identity holds with $\zeta^{-1}$ in place of $\zeta$, we
obtain
\[
\det(AI_{2N_n}-U_n)=(A^{N_n}-1)^2.
\]
Taking the resultant with $P(A)$, using the convention of
Section~\ref{subsec:resultants}, gives the assertion for $\det P(U_n)$.
\end{proof}

\begin{corollary}[The linear case]\label{cor:K3-linear}
For every \(a\in K\), one has
\[
\det(aI_{2N_n}-U_n)=(a^{N_n}-1)^2.
\]
Moreover, the following are equivalent:
\[
a^3\notin W
\]
and
\[
a\notin \bigcup_{n\ge0}\Spec(U_n).
\]
Equivalently, \(a\) is a non-eigenvalue at every finite level if and only
if \(a^3\) is not a \(2\)-power root of unity.
\end{corollary}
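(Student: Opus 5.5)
The plan is to read everything off the explicit characteristic polynomial in Proposition~\ref{prop:K3-resultant}, namely $\det(AI_{2N_n}-U_n)=(A^{N_n}-1)^2$, which holds for every $n\ge0$ with $N_n=3\cdot2^n$.

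\emph{The determinant formula.} Specialize this polynomial identity at $A=a$. This gives
\[
\det(aI_{2N_n}-U_n)=(a^{N_n}-1)^2 .
\]
Equivalently, take $P(A)=A-a$ in the resultant formula of the same proposition. By the convention of Section~\ref{subsec:resultants}, $\Res_A(A^{N_n}-1,A-a)=\prod_{\omega^{N_n}=1}(\omega-a)$. Since $N_n$ is even, this product equals $a^{N_n}-1$, so no sign issue arises. The formula holds for all $a\in K$, including $a=\pm1$, because it is a polynomial identity in $A$.

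\emph{Eigenvalues at a fixed level.} From the formula, $a\in\Spec(U_n)$ if and only if $a^{N_n}=1$. Since $N_n=3\cdot 2^n$, this is the condition $(a^3)^{2^n}=1$, that is, $a^3\in W_n=\mu_{2^n}$.

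\emph{Taking the union over levels.} Because $W=\bigcup_n W_n$, we get
\[
a\in\bigcup_{n\ge0}\Spec(U_n)\iff a^3\in W .
\]
Taking complements gives the stated equivalence. The two formulations in the statement are the same assertion, since $W=\mu_{2^\infty}$ when $p=2$.

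There is no real obstacle. The only points to check are the parity of $N_n$ (to fix the sign) and the fact that $a^3\in W_n$ for some $n$ is the same as $a^3\in W$. The latter holds because the $W_n$ form an increasing chain with union $W$.
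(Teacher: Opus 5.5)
Your proposal is correct and follows the paper's own proof: specialize Proposition~\ref{prop:K3-resultant} at $A=a$ (equivalently take $P(A)=A-a$), rewrite $a^{N_n}=1$ as $(a^3)^{2^n}=1$, i.e.\ $a^3\in W_n$, and take the union over $n$. One small slip is harmless: $N_0=3$ is odd, so your remark that $N_n$ is even fails at $n=0$, but direct evaluation of the polynomial identity involves no sign, and in the resultant route the factor enters squared anyway.
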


\begin{proof}
Apply Proposition~\ref{prop:K3-resultant} to \(P(A)=A-a\).  This gives
the determinant identity for every \(a\in K\).

Since \(N_n=3\cdot2^n\), the equality
\[
a^{N_n}=1
\]
is equivalent to
\[
(a^3)^{2^n}=1,
\]
that is, to \(a^3\in W_n\).  Hence \(a\) occurs as an eigenvalue at some
finite level if and only if \(a^3\in W\).  This proves the equivalence.
\end{proof}

\begin{remark}
This recovers the fixed non-eigenvalue computation
\[
\det(aI_{2N_n}-U_n)=(a^{3\cdot2^n}-1)^2
\]
from the spectral resultant formula.
\end{remark}

\begin{corollary}[Cyclotomic spectra]
Let
\[
P(A)=\Phi_m(A)
\]
be the $m$-th cyclotomic polynomial. If $m>2$, then
\[
\Phi_m\text{-spectrum occurs in }U_n
\quad\Longleftrightarrow\quad
m\mid N_n=3\cdot2^n.
\]
Moreover, if $m\mid N_n$, then
\[
m_n(\Phi_m)=2\varphi(m),
\]
and otherwise
\[
m_n(\Phi_m)=0.
\]
\end{corollary}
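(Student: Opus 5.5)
The plan is to work directly from Proposition~\ref{prop:K3-resultant}, which gives $C_n(A)=\det(AI_{2N_n}-U_n)=(A^{N_n}-1)^2$. The task then reduces to locating the primitive $m$-th roots of unity among the roots of $A^{N_n}-1$ and computing their multiplicities. Two steps carry this out.

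First, since $A^{N_n}-1=\prod_{k\mid N_n}\Phi_k(A)$ is separable in characteristic zero, every $N_n$-th root of unity is a simple root of $A^{N_n}-1$. Hence every eigenvalue of $U_n$ is an $N_n$-th root of unity, and each one has algebraic multiplicity exactly $2$ in $C_n$.

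Second, fix a primitive $m$-th root of unity $\theta$, working over a splitting field of $\Phi_m$. Then $\theta^{N_n}=1$ if and only if $m\mid N_n$. Because $\Phi_m$ is squarefree, each $e_\theta=1$, and so
\[
m_n(\Phi_m)=\sum_{\theta\in\Theta(\Phi_m)}\ord_{A=\theta}(A^{N_n}-1)^2 .
\]
If $m\mid N_n$, every summand equals $2$, and there are $\varphi(m)$ summands, which gives $2\varphi(m)$. If $m\nmid N_n$, every summand vanishes. The statement that ``the $\Phi_m$-spectrum occurs in $U_n$'' means that some root of $\Phi_m$ is an eigenvalue, that is, $m_n(\Phi_m)>0$. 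This is therefore equivalent to $m\mid N_n$.

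The hypothesis $m>2$ plays the following role. It guarantees $\gcd(\Phi_m,A^2-1)=1$, so that the result is consistent with the exceptional-zero statement Theorem~\ref{thm:exceptional} and the Bass factor plays no role. This is automatic here in any case, since $\chi(X)=0$. As a cross-check, I would also derive the result from Theorem~\ref{thm:multiplicity}: the $\zeta$-term $\ord^*_{\Phi_m}(A^3-\zeta)(A^3-\zeta^{-1})$ counts primitive $m$-th roots $\theta$ with $\theta^3=\zeta^{\pm1}$, and summing over $\zeta\in W_n$ recovers the same count.

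I do not expect a real obstacle. The only point that needs care is the claim that the multiplicity is exactly $2$ rather than at least $2$, and this follows from the separability of $A^{N_n}-1$ over a field of characteristic $0$ (here $\overline{\Qp}$).
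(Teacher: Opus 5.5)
Your proposal is correct and follows essentially the paper's argument. Both read everything off $C_n(A)=(A^{N_n}-1)^2$ from Proposition~\ref{prop:K3-resultant}: occurrence is equivalent to a primitive $m$-th root of unity being an $N_n$-th root of unity, and the multiplicity $2\varphi(m)$ comes from simple roots of $A^{N_n}-1$ being squared. The only difference is presentational. The paper phrases occurrence through $\det\Phi_m(U_n)=\Res_A(A^{N_n}-1,\Phi_m)^2$ and the coprime exceptional-zero criterion, which is where it uses $m>2$. You compute $m_n(\Phi_m)$ directly, and you are right that this direct computation does not actually need $m>2$.
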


\begin{proof}
For $m>2$, the polynomial $\Phi_m(A)$ is coprime to $A^2-1$. Thus the
clean exceptional-zero criterion applies. By Proposition~\ref{prop:K3-resultant},
\[
\det \Phi_m(U_n)=\Res_A(A^{N_n}-1,\Phi_m(A))^2.
\]
Therefore $\Phi_m$-spectrum occurs in $U_n$ if and only if $A^{N_n}-1$
and $\Phi_m(A)$ have a common root. This happens if and only if a
primitive $m$-th root of unity is an $N_n$-th root of unity, equivalently
$m\mid N_n$. When this condition holds, each primitive $m$-th root
contributes once to $A^{N_n}-1$, and the square gives multiplicity $2$.
Hence $m_n(\Phi_m)=2\varphi(m)$.
\end{proof}

\begin{remark}
The cases $m=1$ and $m=2$ involve the factors $A-1$ and $A+1$, which meet
the Bass factor $A^2-1$. They are governed by the full multiplicity formula
rather than the clean coprime criterion.
\end{remark}

\begin{corollary}[Exceptional zeros in the $K_3$-tower]
Let $P(A)\in K[A]$ be monic. Then
\[
\Rcal_{X,P}(\zeta-1)=0
\]
for $\zeta\in W$ if and only if there exists a root $\theta$
of $P$ in a splitting field such that
\[
\theta^3=\zeta
\qquad\text{or}\qquad
\theta^3=\zeta^{-1}.
\]
Equivalently, the exceptional zero set is
\[
\mathcal Z_{X,P}
:=
\{\theta^3,\theta^{-3}: P(\theta)=0,\ \theta^3\in W\}.
\]
\end{corollary}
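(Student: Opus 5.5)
The plan is a direct computation from the explicit factorization $\Fcal_X(A,T)=(A^3-z)(A^3-z^{-1})$ with $z=1+T$, combined with the root expression of Lemma~\ref{lem:R-root-expression}. We work over a splitting field of $P$ and write $P(A)=\prod_{\theta\in\Theta^{\mathrm{mult}}(P)}(A-\theta)$. Lemma~\ref{lem:R-root-expression} then gives
\[
\Rcal_{X,P}(T)=\prod_{\theta\in\Theta^{\mathrm{mult}}(P)}\Fcal_X(\theta,T)
=\prod_{\theta\in\Theta^{\mathrm{mult}}(P)}(\theta^3-z)(\theta^3-z^{-1}).
\]
This identity holds in $K'[[T]]$ for a finite extension $K'$. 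Here $z^{-1}=(1+T)^{-1}$ lies in $\mathbb Z_2[[T]]$, so the factorization may legitimately be specialized at $T=\zeta-1$ for $\zeta\in W$.

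Next I specialize at $T=\zeta-1$. The substitution $T\mapsto\zeta-1$ is a continuous ring homomorphism into the valuation ring of a finite extension, since $v_2(\zeta-1)>0$. It sends $z\mapsto\zeta$ and $z^{-1}\mapsto\zeta^{-1}$, because $(1+T)(1+T)^{-1}=1$ is preserved. Hence
\[
\Rcal_{X,P}(\zeta-1)=\prod_{\theta}(\theta^3-\zeta)(\theta^3-\zeta^{-1}).
\]
This product lies in a field, so it vanishes if and only if some factor vanishes. That happens exactly when some root $\theta$ of $P$ satisfies $\theta^3=\zeta$ or $\theta^3=\zeta^{-1}$, which proves the first assertion.

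For the description of the zero set, I identify a torsion point $\zeta\in W$ with the point $T=\zeta-1$. The first assertion then says that $\zeta$ is an exceptional zero if and only if $\zeta\in\{\theta^3,\theta^{-3}\}$ for some root $\theta$ of $P$. In either case $\theta^3\in W$, because $W$ is a group closed under inversion ($\zeta^{-1}\in W$). Conversely, if $P(\theta)=0$ and $\theta^3\in W$, then both $\zeta=\theta^3$ and $\zeta=\theta^{-3}$ lie in $W$ and are zeros. This is exactly the set $\mathcal Z_{X,P}$.

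The only step needing care is the legitimacy of specializing the identity involving $z^{-1}$ at torsion points, together with the observation that the root-product formula survives specialization. Both follow from the homomorphism property noted above, so I expect no real obstacle; the argument is essentially bookkeeping.
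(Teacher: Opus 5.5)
Your proposal is correct and is essentially the paper's argument. The paper splits $\Rcal_{X,P}(T)=\Res_A(A^3-z,P)\,\Res_A(A^3-z^{-1},P)$ by multiplicativity of the resultant and reads off the common-root criterion, while you expand over the roots of $P$ via Lemma~\ref{lem:R-root-expression}, which is the same computation; your remarks on specializing $z^{-1}$ at torsion points and on identifying the zero set with $\mathcal Z_{X,P}$ make explicit what the paper leaves implicit.
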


\begin{proof}
Since
\[
\Fcal_X(A,T)=(A^3-z)(A^3-z^{-1}),
\]
we have
\[
\Rcal_{X,P}(T)
=
\Res_A(A^3-z,P(A))\Res_A(A^3-z^{-1},P(A)).
\]
Thus $\Rcal_{X,P}(\zeta-1)=0$ if and only if $P(A)$ has a root $\theta$
satisfying $\theta^3=\zeta$ or $\theta^3=\zeta^{-1}$.
\end{proof}

\subsection{A two-variable example}\label{subsec:two-variable-example}

We next record a small example with $d=2$.  Let $p=2$ and let $X$ be the
one-vertex symmetric digraph with three inverse pairs of loops
$s_0,\bar s_0$, $s_1,\bar s_1$, and $s_2,\bar s_2$.  The Grover weight is
\[
w(e)=\frac{2}{6}=\frac13\in\mathbb Z_2.
\]
Let $\Gamma=\mathbb Z_2^2$ and define
\[
\alpha(s_0)=(1,0),\qquad
\alpha(s_1)=(0,1),\qquad
\alpha(s_2)=(0,0),
\]
with inverse values on the inverse edges.  Put $z_i=1+T_i$.  Then
\[
W_{\tau,\alpha}=\frac13(z_1+z_1^{-1}+z_2+z_2^{-1}+2),
\]
and hence
\[
\Fcal_X(A,T)=A^2+1-\frac{A}{3}(z_1+z_1^{-1}+z_2+z_2^{-1}+2).
\]
For the linear packet $P(A)=A-3$, we obtain
\[
\Rcal_{X,P}(T)=\Fcal_X(3,T)
=4-\frac{T_1^2}{1+T_1}-\frac{T_2^2}{1+T_2}.
\]
Modulo $2$,
\[
\overline{\Rcal}_{X,P}(T)=
\frac{T_1^2}{1+T_1}+\frac{T_2^2}{1+T_2}.
\]
Multiplying by the unit $(1+T_1)(1+T_2)$ gives
\[
T_1^2(1+T_2)+T_2^2(1+T_1)
=(T_1+T_2)((1+T_1)(1+T_2)-1).
\]
Equivalently, up to multiplication by a unit,
\[
\overline{\Rcal}_{X,P}(T)=
\bigl((1+T_1)(1+T_2)^{-1}-1\bigr)
\bigl((1+T_1)(1+T_2)-1\bigr).
\]
The two displayed factors are distinguished linear primes of the form
$(\sigma-1)$. Hence the divisor-sum definition gives
\[
\mu(\Rcal_{X,P})=0,
\qquad
\lambda(\Rcal_{X,P})=2.
\]
The torsion non-vanishing condition also holds.  Indeed, after embedding the relevant cyclotomic field into $\mathbb C$,
if $z_1,z_2$ are roots of unity and
$\Fcal_X(3,z_1-1,z_2-1)=0$, then
\[
z_1+z_1^{-1}+z_2+z_2^{-1}=8,
\]
which is impossible because the absolute value of the left-hand side is
at most $4$.  Since
\[
\chi(X)=1-3=-2,
\qquad
v_2(\Res_A(A^2-1,A-3))=v_2(8)=3,
\]
Theorem~\ref{thm:resultant-iwasawa} gives
\[
v_2(\det(3I-U_n))
=
6\cdot 2^{2n}+2n\cdot 2^n+O(2^n).
\]
This example exhibits the two-variable leading term in the growth formula.

\subsection{A two-variable equivariant Kida example}\label{subsec:two-variable-kida-example}

We keep the two-variable base graph of Section~\ref{subsec:two-variable-example}. Let
\[
G=\mathbb Z/2\mathbb Z=\{0,1\}
\]
and define a voltage assignment \(\beta:E(X)\to G\) by
\[
\beta(s_0)=1,
\qquad
\beta(s_1)=\beta(s_2)=0,
\qquad
\beta(\bar s_i)=\beta(s_i).
\]
Since \(\beta(s_0)\) generates \(G\), the derived cover
\(Y=X(G,\beta)\to X\) is connected and \([Y:X]=2\). Let \(P(A)=A-3\), and
let \(U_n^Y\) denote the Grover transition matrix on the \(n\)-th layer of
the pullback tower over \(Y\).

\begin{proposition}[The unramified Kida formula in two variables]
\label{prop:two-variable-kida}
In the two-variable cover above,
\[
\mu(\Rcal_{Y,P})=0,
\qquad
\lambda(\Rcal_{Y,P})=4=2\lambda(\Rcal_{X,P}).
\]
Moreover,
\[
\mu^{\mathrm{qw}}_{Y,P}=12=2\mu^{\mathrm{qw}}_{X,P},
\]
and the Grover determinant on the pullback tower over \(Y\) satisfies
\[
v_2\bigl(\det(3I-U_n^Y)\bigr)
=
12\cdot 2^{2n}+4n\cdot 2^n+O(2^n).
\]
\end{proposition}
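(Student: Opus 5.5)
The plan is to compute everything from the results already stated rather than from scratch. By Proposition~\ref{prop:equiv-factor}, with $G=\mathbb Z/2\mathbb Z$ split over $K=\mathbb Q_2$ and characters $\rho_0$ (trivial) and $\rho_1$ (sign), one has
\[
\Rcal_{Y,P}=\Rcal_{X,P,\rho_0}\Rcal_{X,P,\rho_1}.
\]
Since $X$ has one vertex, the twisted matrices are scalars. We have $\Rcal_{X,P,\rho_0}=\Rcal_{X,P}=4-\frac{T_1^2}{1+T_1}-\frac{T_2^2}{1+T_2}$. Because $\rho_1(\beta(s_0))=-1$, the twist replaces $z_1+z_1^{-1}$ by $-(z_1+z_1^{-1})$, so
\[
\Rcal_{X,P,\rho_1}(T)=10-(-z_1-z_1^{-1}+z_2+z_2^{-1}+2)=8+z_1+z_1^{-1}-z_2-z_2^{-1}.
\]
Modulo $2$ this is $\frac{T_1^2}{1+T_1}+\frac{T_2^2}{1+T_2}$, agreeing with $\overline{\Rcal_{X,P}}$, as predicted by Lemma~\ref{lem:pgroup-reduction}.

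The integrality hypotheses of Theorem~\ref{thm:kida} hold, since $w=1/3\in\mathbb Z_2$ and $P=A-3\in\mathbb Z_2[A]$. Both resultants are nonzero; for instance their constant terms are $4$ and $4\cdot 12\ne0$. Section~\ref{subsec:two-variable-example} showed $\mu(\Rcal_{X,P})=0$ and $\lambda(\Rcal_{X,P})=2$. Theorem~\ref{thm:kida} therefore gives $\mu(\Rcal_{Y,P})=0$ and $\lambda(\Rcal_{Y,P})=2\cdot2=4$. As a check, $\overline{\Rcal_{Y,P}}=\overline{\Rcal_{X,P}}^{\,2}$ is, up to a unit, the square of the product of the two distinguished linear primes found earlier, so the divisor sum is $4$ directly.

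For the $\mu^{\mathrm{qw}}$ statement, I will use the Bass-corrected remark after Theorem~\ref{thm:kida}: $\mu^{\mathrm{qw}}_{Y,P}=[Y:X]\mu^{\mathrm{qw}}_{X,P}$. Here $\mu^{\mathrm{qw}}_{X,P}=0-(-2)\cdot3=6$, so $\mu^{\mathrm{qw}}_{Y,P}=12$. Directly, $\chi(Y)=2-6=-4$ and $v_2(8)=3$, giving $12$ as well.

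The asymptotic formula requires Theorem~\ref{thm:resultant-iwasawa} for the tower over $Y$. This means checking $P(1)P(-1)=(-2)(-4)\ne0$ and torsion non-vanishing of $\Rcal_{Y,P}$. The latter is the only step needing a new argument, and I expect it to be the main (mild) obstacle. The factor $\Rcal_{X,P}(\zeta-1)$ is nonzero at torsion points by the earlier absolute-value argument. For the sign-twisted factor, after embedding into $\mathbb C$, a zero would force
\[
z_2+z_2^{-1}-z_1-z_1^{-1}=8,
\]
which is impossible since the left side has absolute value at most $4$. Then $\lambda^{\mathrm{qw}}_{Y,P}=4$, $d=2$, $p=2$, and Theorem~\ref{thm:resultant-iwasawa} yields
\[
v_2\bigl(\det(3I-U_n^Y)\bigr)=12\cdot2^{2n}+4n\cdot2^n+O(2^n).
\]
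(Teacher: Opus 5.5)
Your proposal is correct and follows the paper's proof essentially step by step: the factorization $\Rcal_{Y,P}=\Rcal_{X,P}\,\Rcal_{X,P,\rho_1}$ from Proposition~\ref{prop:equiv-factor}, the mod-$2$ identity $\overline{\Rcal}_{Y,P}=\overline{\Rcal}_{X,P}^{\,2}$, the absolute-value argument for the sign-twisted factor, and $\chi(Y)=-4$ fed into Theorem~\ref{thm:resultant-iwasawa}; the only difference in order is that the paper computes $\mu$ and $\lambda$ directly from the mod-$2$ reduction and cites Theorem~\ref{thm:kida} as a check, whereas you do the reverse. One harmless slip: $\Rcal_{X,P,\rho_1}(0)=8+1+1-1-1=8$, so $\Rcal_{Y,P}(0)=4\cdot 8=32$ rather than $4\cdot 12$, which does not affect the non-vanishing you need.
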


\begin{proof}
The group \(G\) has two irreducible representations: the trivial
representation \(\rho_0\) and the sign representation \(\rho_1\).  Put
\[
u=\frac{T_1^2}{1+T_1},
\qquad
v=\frac{T_2^2}{1+T_2}.
\]
For the trivial representation, \(\Rcal_{X,P,\rho_0}=\Rcal_{X,P}=4-u-v\).
For the sign representation, the contribution of the pair
\(s_0,\bar s_0\) changes sign, and hence
\[
\Fcal_{X,\rho_1}(A,T)
=
A^2+1-\frac{A}{3}\bigl(-z_1-z_1^{-1}+z_2+z_2^{-1}+2\bigr).
\]
Therefore
\[
\Rcal_{X,P,\rho_1}(T)=\Fcal_{X,\rho_1}(3,T)=8+u-v.
\]
This is not equal to \(\Rcal_{X,P}\) in characteristic zero, but modulo
\(2\) both reduce to \(u+v\). By Proposition~\ref{prop:equiv-factor},
\[
\Rcal_{Y,P}=(4-u-v)(8+u-v)=32-4u-12v-u^2+v^2.
\]
Modulo \(2\), this gives
\[
\overline{\Rcal}_{Y,P}=u^2+v^2=(u+v)^2
=\overline{\Rcal}_{X,P}^{\,2}.
\]
Since \(\overline{\Rcal}_{X,P}\ne0\), we have \(\Rcal_{Y,P}\ne0\).  The
base computation in Section~\ref{subsec:two-variable-example} gives
\(\mu(\Rcal_{X,P})=0\) and \(\lambda(\Rcal_{X,P})=2\), and the last
display gives
\[
\mu(\Rcal_{Y,P})=0,
\qquad
\lambda(\Rcal_{Y,P})=4.
\]
This agrees with Theorem~\ref{thm:kida}.

The torsion non-vanishing condition for the cover also holds.  Indeed, the
factor \(4-u-v\) was already treated in Section~\ref{subsec:two-variable-example}. If the factor
\(8+u-v\) vanished at roots of unity, then, after embedding the relevant
cyclotomic field into \(\mathbb C\), we would have
\[
z_1+z_1^{-1}-z_2-z_2^{-1}=-8,
\]
which is impossible because the absolute value of the left-hand side is at
most \(4\).  Thus Theorem~\ref{thm:resultant-iwasawa} applies to the pullback
tower over \(Y\). Since \(\chi(Y)=2\chi(X)=-4\) and
\(v_2(\Res_A(A^2-1,A-3))=v_2(8)=3\), the Bass-corrected invariant is
\[
\mu^{\mathrm{qw}}_{Y,P}=0-(-4)\cdot3=12.
\]
Together with \(\lambda(\Rcal_{Y,P})=4\), this gives the stated growth
formula.
\end{proof}

\subsection{A non-abelian 5-group application}\label{subsec:heisenberg-application}

We now give a concrete application of the unramified equivariant quantum Kida
formula in a genuinely non-abelian setting.  The example shows that the
integrality and nonzero-resultant hypotheses of Theorem~\ref{thm:kida}
are satisfied for a non-abelian $5$-group cover, and that the theorem
computes the corresponding Iwasawa invariants.

We first treat the linear polynomial $P(A)=A-6$.  We then consider the
quadratic spectral packet
\[
P(A)=(A-6)(A-11),
\]
which illustrates the polynomial nature of the spectral resultant in the
Kida formula.  Finally, we exhibit a polynomial $P_5(A)$ with torsion zeros, giving a
concrete instance of the exceptional-zero criterion of
Section~\ref{sec:exceptional}.

We consider three packets.  The first two lie in the torsion non-vanishing
regime, while the third lies in the torsion-zero regime.

Let $p=5$. Let $X$ be the one-vertex symmetric digraph with three inverse
pairs of loops
\[
s_0,\bar s_0,\qquad s_1,\bar s_1,\qquad s_2,\bar s_2.
\]
Then $d_v=6$, and the Grover weight is
\[
w(e)=\frac{2}{6}=\frac13\in\mathbb Z_5.
\]
Let $\Gamma=\mathbb Z_5$. Define a voltage assignment
$\alpha:E(X)\to\Gamma$ by
\[
\alpha(s_0)=1,
\qquad
\alpha(s_1)=\alpha(s_2)=0,
\]
and $\alpha(\bar s_i)=-\alpha(s_i)$. Put $z=1+T$. Since $X$ has one
vertex, the universal weighted adjacency matrix is a scalar:
\[
W_{\tau,\alpha}=\frac13(z+z^{-1}+4).
\]
Therefore
\[
\Fcal_X(A,T)=A^2+1-\frac{A}{3}(z+z^{-1}+4).
\]

Let $P(A)=A-6$. Then
\[
\Rcal_{X,P}(T)=\Fcal_X(6,T).
\]
We compute
\[
\Fcal_X(6,T)=6^2+1-\frac63(z+z^{-1}+4).
\]
Hence
\[
\Rcal_{X,P}(T)=37-2(z+z^{-1}+4)=29-2(z+z^{-1}).
\]
Since
\[
z+z^{-1}=2+\frac{T^2}{1+T},
\]
we get
\[
\Rcal_{X,P}(T)=25-\frac{2T^2}{1+T}.
\]
Modulo $5$,
\[
\overline{\Rcal}_{X,P}(T)=-\frac{2T^2}{1+T}.
\]
Since $1+T$ is a unit in $\mathbb F_5[[T]]$, we have
\[
\lambda_k(\overline{\Rcal}_{X,P})
=
\ord_{(T)}\left(-\frac{2T^2}{1+T}\right)
=2.
\]
Therefore
\[
\mu(\Rcal_{X,P})=0,
\qquad
\lambda(\Rcal_{X,P})=2.
\]

Let $G=H_5(\mathbb F_5)$ be the Heisenberg group of order $5^3=125$, with
presentation
\[
G=\langle x,y,c\mid x^5=y^5=c^5=1,\ c=[x,y],\ c\text{ central}\rangle.
\]
The representation-theoretic decomposition is genuinely non-abelian.  The
abelianization of $G$ has order $25$, giving $25$ one-dimensional irreducible
representations, and the remaining irreducible representations have dimension
$5$.  Since the number of nontrivial central characters is $4$, there are four
such $5$-dimensional representations.  Thus
\[
25\cdot 1^2+4\cdot 5^2=125=|G|.
\]
This is the regular-representation dimension identity used in
Proposition~\ref{prop:equiv-factor}.

Let $K/\mathbb Q_5$ be a splitting field for $G$, with valuation ring
$\mathcal O$.  The elements $1/3$, $6$, and $11$ lie in
$\mathbb Z_5\subset\mathcal O$. Hence the integrality assumptions in
Theorem~\ref{thm:kida} are satisfied for the linear and quadratic packets
below.

Define a $G$-cover $Y\to X$ by the voltage assignment
\[
\beta(s_0)=1_G,
\qquad
\beta(s_1)=x,
\qquad
\beta(s_2)=y,
\]
and $\beta(\bar s_i)=\beta(s_i)^{-1}$. Since $x$ and $y$ generate $G$, the
cover is connected. Therefore $[Y:X]=|G|=125$.
Here $\beta$ defines the finite Heisenberg cover $Y\to X$, whereas
$\alpha$ defines the $\mathbb Z_5$-tower. The tower over $Y$ is the
pullback tower defined by $\alpha\circ\pi_E$.

\begin{proposition}
For the Heisenberg $5$-group cover above and $P(A)=A-6$, one has
\[
\mu(\Rcal_{Y,P})=0
\]
and
\[
\lambda(\Rcal_{Y,P})=250.
\]
In particular,
\[
\lambda(\Rcal_{Y,P})=[Y:X]\lambda(\Rcal_{X,P}).
\]
\end{proposition}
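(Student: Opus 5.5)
The plan is to apply Theorem~\ref{thm:kida} directly, after checking its hypotheses, rather than computing $\Rcal_{Y,P}$ explicitly. The base computation above already gives
\[
\Rcal_{X,P}=25-\frac{2T^2}{1+T},\qquad \mu(\Rcal_{X,P})=0,\qquad \lambda(\Rcal_{X,P})=2,
\]
and these values take care of the base side of the hypotheses.

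The integrality assumptions hold. Indeed $w(e)=1/3\in\mathbb Z_5\subset\mathcal O$, and $P(A)=A-6$ is monic with integral coefficients. The field $K$ is taken to be a splitting field for $G$; by Lemma~\ref{lem:scalar-extension}, enlarging $K$ changes neither the normalized $\mu$-invariant nor the $\lambda$-invariant of $\Rcal_{X,P}$.

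$\Rcal_{X,P}\neq0$ is clear from its explicit form.

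It remains to show $\Rcal_{Y,P}\neq0$, and this is the one point needing care. I would argue modulo $\mathfrak m$. By Proposition~\ref{prop:equiv-factor} and Lemma~\ref{lem:pgroup-reduction}, exactly as in the proof of Theorem~\ref{thm:kida}, the reduction of $\Rcal_{Y,P}$ is $\overline{\Rcal}_{X,P}^{\,|G|}$. Here $\overline{\Rcal}_{X,P}=-2T^2/(1+T)$ is nonzero, and $k[[T]]$ is a domain, so $\overline{\Rcal}_{Y,P}\neq0$. Hence $\Rcal_{Y,P}\neq0$.

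The argument never uses that the integral structure is the one making $\Fcal_Y$ integral with respect to the ordinary vertex basis. This is harmless: $\Rcal_{Y,P}$ is a product of resultants $\Rcal_{X,P,\rho}^{d_\rho}$, each integral after choosing $G$-stable lattices as in Lemma~\ref{lem:pgroup-reduction}. The product formula is an identity of elements of $\Lambda_{K,1}$ that does not depend on bases. So the reduction of $\Rcal_{Y,P}$ is computed correctly.

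Theorem~\ref{thm:kida} now applies. Since $\mu(\Rcal_{X,P})=0$, it gives $\mu(\Rcal_{Y,P})=0$ and
\[
\lambda(\Rcal_{Y,P})=[Y:X]\,\lambda(\Rcal_{X,P})=125\cdot2=250.
\]

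As a sanity check, one can see this directly. Up to a unit, $\overline{\Rcal}_{Y,P}=(-2)^{125}T^{250}(1+T)^{-125}$, whose $T$-adic order is $250$. In one variable the only height-one prime of the form $(\sigma-1)$ with $\sigma\in\Gamma\setminus\Gamma^p$ is $(T)$, up to a unit. So $\lambda_k(\overline{\Rcal}_{Y,P})=250$, matching the theorem.

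I expect no real obstacle. The only subtle step is the non-vanishing and integrality bookkeeping for $\Rcal_{Y,P}$ through the twisted factors. The reduction argument above handles it without writing down the four $5$-dimensional twisted determinants.
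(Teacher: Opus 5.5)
Your proposal is correct and follows essentially the same route as the paper. Both verify the integrality and splitting-field hypotheses, then deduce $\Rcal_{Y,P}\neq0$ from $\overline{\Rcal}_{Y,P}=\overline{\Rcal}_{X,P}^{\,125}\neq0$ via Proposition~\ref{prop:equiv-factor} and Lemma~\ref{lem:pgroup-reduction}, and finally invoke Theorem~\ref{thm:kida} to obtain $\mu(\Rcal_{Y,P})=0$ and $\lambda(\Rcal_{Y,P})=125\cdot 2=250$; your remarks on basis-independence and the direct $T$-adic order check are sound but not needed.
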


\begin{proof}
We have already computed
\[
\mu(\Rcal_{X,P})=0,
\qquad
\lambda(\Rcal_{X,P})=2.
\]
The field $K$ was chosen to split $G$, the Grover weight satisfies
$w(e)=1/3\in\mathbb Z_5\subset\mathcal O$, and
$P(A)=A-6\in\mathbb Z_5[A]\subset\mathcal O[A]$. Thus the integrality and
splitting-field hypotheses of Theorem~\ref{thm:kida} are satisfied.
Modulo $\mathfrak m$, Proposition~\ref{prop:equiv-factor} together with
Lemma~\ref{lem:pgroup-reduction} gives
\[
\overline{\Rcal}_{Y,P}(T)
=
\overline{\Rcal}_{X,P}(T)^{|G|}
=
\left(-\frac{2T^2}{1+T}\right)^{125}.
\]
In particular, $\overline{\Rcal}_{Y,P}\ne0$, and hence
$\Rcal_{Y,P}\ne0$.  We may therefore apply Theorem~\ref{thm:kida}. It gives
\[
\mu(\Rcal_{Y,P})=0,
\qquad
\lambda(\Rcal_{Y,P})=125\cdot2=250.
\]
Since $[Y:X]=125$, the final equality follows.
\end{proof}

\begin{remark}
In this example,
\[
\Res_A(A^2-1,P)=6^2-1=35,
\]
and hence
\[
v_5(\Res_A(A^2-1,P))=1.
\]
The base graph $X$ has one vertex and three undirected loops, so
\[
\chi(X)=1-3=-2.
\]
Thus
\[
\mu^{\mathrm{qw}}_{X,P}
=
\mu(\Rcal_{X,P})-
\chi(X)v_5(\Res_A(A^2-1,P))
=
0-(-2)\cdot1=2.
\]
Since $\chi(Y)=[Y:X]\chi(X)=125\cdot(-2)=-250$, we also have
\[
\mu^{\mathrm{qw}}_{Y,P}=0-(-250)\cdot1=250.
\]
Consequently,
\[
\mu^{\mathrm{qw}}_{Y,P}=[Y:X]\mu^{\mathrm{qw}}_{X,P}.
\]

\end{remark}

\begin{proposition}[A quadratic spectral packet]\label{prop:heisenberg-quadratic}
In the Heisenberg $5$-group example, put
\[
P(A)=(A-6)(A-11).
\]
Then
\[
\mu(\Rcal_{X,P})=0,
\qquad
\lambda(\Rcal_{X,P})=4.
\]
Consequently, for the Heisenberg cover $Y\to X$,
\[
\mu(\Rcal_{Y,P})=0,
\qquad
\lambda(\Rcal_{Y,P})=500.
\]
\end{proposition}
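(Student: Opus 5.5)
By Lemma~\ref{lem:R-root-expression}, the spectral resultant of a product packet factors over its roots:
\[
\Rcal_{X,P}(T)=\Fcal_X(6,T)\,\Fcal_X(11,T)=\Rcal_{X,A-6}(T)\,\Rcal_{X,A-11}(T).
\]
We already have $\Rcal_{X,A-6}(T)=25-2T^2/(1+T)$. For the other factor,
\[
\Fcal_X(11,T)=122-\tfrac{11}{3}\bigl(z+z^{-1}+4\bigr).
\]
Substituting $z+z^{-1}=2+T^2/(1+T)$ gives
\[
\Rcal_{X,A-11}(T)=122-22-\tfrac{11}{3}\cdot\tfrac{T^2}{1+T}=100-\tfrac{11}{3}\cdot\tfrac{T^2}{1+T}.
\]
Both factors lie in $\mathbb Z_5[[T]]$, since $1/3\in\mathbb Z_5$.

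Reducing modulo $5$, the first factor becomes $-2T^2/(1+T)$. The second becomes $-\overline{(11/3)}\,T^2/(1+T)$, where $11/3\equiv 1/3\equiv 2 \pmod 5$. Hence
\[
\overline{\Rcal}_{X,P}=c\,\frac{T^4}{(1+T)^2}
\]
with $c\in\mathbb F_5^\times$, and in particular it is nonzero. Therefore $\Rcal_{X,P}\ne0$ and $\mu(\Rcal_{X,P})=0$. Since $1+T$ is a unit and $(T)=(\sigma-1)$ with $\sigma$ a generator of $\Gamma=\mathbb Z_5$, the divisor-sum definition gives $\lambda(\Rcal_{X,P})=\lambda_k(\overline{\Rcal}_{X,P})=4$. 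Alternatively, $\lambda_k$ is additive over products in the unique factorization domain $\mathbb F_5[[T]]$, so the answer is $2+2$ from the two linear packets.

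For the cover, I apply Theorem~\ref{thm:kida}. Its hypotheses hold as follows.
\begin{itemize}
\item $K$ splits $G$ and the weight $1/3$ lies in $\mathcal O$.
\item $P=(A-6)(A-11)\in\mathbb Z_5[A]$ is monic.
\item $\Rcal_{X,P}\ne0$, as just shown.
\end{itemize}
The remaining hypothesis is $\Rcal_{Y,P}\ne0$, which is the only point needing care. It follows from Proposition~\ref{prop:equiv-factor} and Lemma~\ref{lem:pgroup-reduction}: these give $\overline{\Rcal}_{Y,P}=\overline{\Rcal}_{X,P}^{\,125}\ne0$ in the domain $k[[T]]$, so $\Rcal_{Y,P}\ne0$. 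Theorem~\ref{thm:kida} then yields $\mu(\Rcal_{Y,P})=0$ and $\lambda(\Rcal_{Y,P})=125\cdot 4=500$.

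I expect no real obstacle. The only points to verify are that $11/3$ is a $5$-adic unit, so that the reduction of the second factor is nonzero of $T$-order exactly $2$, and the arithmetic $122-22=100$ (or equivalently $100\equiv0\pmod 5$), which makes the constant term vanish modulo $5$.
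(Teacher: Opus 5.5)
Your proposal is correct and follows essentially the same route as the paper. Both factor $\Rcal_{X,P}=\Fcal_X(6,T)\Fcal_X(11,T)$ and reduce each factor modulo $5$ to $-2T^2/(1+T)$, which gives $\mu=0$ and $\lambda=4$; both then verify $\Rcal_{Y,P}\ne0$ via $\overline{\Rcal}_{Y,P}=\overline{\Rcal}_{X,P}^{\,125}\ne0$ and apply Theorem~\ref{thm:kida}.
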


\begin{proof}
From
\[
z+z^{-1}+4=6+\frac{T^2}{1+T},
\]
we may rewrite the spectral element as
\[
\Fcal_X(A,T)
=A^2+1-\frac{A}{3}(z+z^{-1}+4)
=(A-1)^2-\frac{A}{3}\frac{T^2}{1+T}.
\]
By the root expression for the resultant,
\[
\Rcal_{X,P}(T)=\Fcal_X(6,T)\Fcal_X(11,T).
\]
We have
\[
\Fcal_X(6,T)=25-2\frac{T^2}{1+T},
\qquad
\Fcal_X(11,T)=100-\frac{11}{3}\frac{T^2}{1+T}.
\]
Modulo $5$, since $11/3\equiv 2$, we get
\[
\overline{\Fcal_X(6,T)}=-2\frac{T^2}{1+T},
\qquad
\overline{\Fcal_X(11,T)}=-2\frac{T^2}{1+T}.
\]
Therefore
\[
\overline{\Rcal}_{X,P}(T)
=4\frac{T^4}{(1+T)^2}.
\]
Since $1+T$ is a unit in $\mathbb F_5[[T]]$, we obtain
\[
\mu(\Rcal_{X,P})=0,
\qquad
\lambda(\Rcal_{X,P})=4.
\]
The integrality and splitting-field hypotheses are the same as in the
linear case. Moreover, the reduction identity from
Proposition~\ref{prop:equiv-factor} and Lemma~\ref{lem:pgroup-reduction}
gives
\[
\overline{\Rcal}_{Y,P}
=
\overline{\Rcal}_{X,P}^{125}
\ne0,
\]
so $\Rcal_{Y,P}\ne0$. Applying Theorem~\ref{thm:kida}, we get
\[
\lambda(\Rcal_{Y,P})=125\cdot4=500,
\]
and $\mu(\Rcal_{Y,P})=0$.
\end{proof}

\begin{remark}
For this quadratic spectral packet,
\[
\Res_A(A^2-1,P)=P(1)P(-1)=50\cdot84,
\]
so $v_5(\Res_A(A^2-1,P))=2$. Since $\chi(X)=-2$,
\[
\mu^{\mathrm{qw}}_{X,P}=0-(-2)\cdot2=4.
\]
Similarly, $\chi(Y)=125\chi(X)=-250$, and hence
\[
\mu^{\mathrm{qw}}_{Y,P}=0-(-250)\cdot2=500
=[Y:X]\mu^{\mathrm{qw}}_{X,P}.
\]
This example also shows that torsion non-vanishing may hold beyond the
unit criterion of Proposition~\ref{prop:unit-criterion}. For instance,
$\Rcal_{X,A-6}(0)=25\notin\mathbb Z_5^\times$, but a torsion zero would
force
\[
\zeta+\zeta^{-1}=\frac{29}{2},
\]
which is impossible after embedding the corresponding cyclotomic field into
\(\mathbb C\). The factor \(A-11\) is similar and would force
\[
\zeta+\zeta^{-1}=\frac{322}{11}.
\]
Thus the linear and quadratic packets both satisfy torsion non-vanishing.
\end{remark}

\begin{proposition}[A torsion-zero spectral packet]\label{prop:p5-torsion-packet}
In the one-vertex $6$-regular base graph of this subsection, put
\[
P_5(A)
=
A^4-\frac73A^3+\frac{29}{9}A^2-\frac73A+1
\in\mathbb Z_5[A].
\]
Then
\[
P_5(1)=\frac59,
\qquad
P_5(-1)=\frac{89}{9},
\]
so $P_5$ is coprime to $A^2-1$.  Moreover,
\[
\Rcal_{X,P_5}(T)
=
\frac{\Phi_5(z)^2}{81z^4},
\qquad z=1+T.
\]
As a power series over $\mathbb Z_5$, it satisfies
\[
\mu(\Rcal_{X,P_5})=0,
\qquad
\lambda(\Rcal_{X,P_5})=8.
\]
Consequently, for $\zeta\in\mu_{5^\infty}$,
\[
\Rcal_{X,P_5}(\zeta-1)=0
\]
if and only if $\zeta$ is a primitive fifth root of unity. Thus the torsion
non-vanishing hypothesis in Theorem~\ref{thm:resultant-iwasawa} fails. The
roots of $P_5$ occur as Grover eigenvalues on every level containing the
primitive fifth characters, in particular for every $n\ge1$.
\end{proposition}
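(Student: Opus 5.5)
The plan is to exploit the fact that $P_5$ is palindromic and that $\Fcal_X$ depends on $T$ only through $s=z+z^{-1}$. Both then reduce to quadratics in the variable $u=A+A^{-1}$, and the resultant becomes a square.

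First the elementary checks. The coefficients $1,\,7/3,\,29/9$ lie in $\mathbb Z_5$ because $3\in\mathbb Z_5^\times$. Direct substitution gives $P_5(1)=(9-21+29-21+9)/9=5/9$ and $P_5(-1)=(9+21+29+21+9)/9=89/9$. Both values are nonzero, so $\gcd(P_5,A^2-1)=1$.

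Next, dividing $P_5(A)$ by $A^2$ gives
\[
P_5(A)=A^2\,Q(A+A^{-1}),\qquad Q(u)=u^2-\tfrac73u+\tfrac{11}{9}.
\]
The roots of $Q$ are $u_\pm=(7\pm\sqrt5)/6$. So the four roots of $P_5$ are the pairs $\{\theta,\theta^{-1}\}$ with $\theta+\theta^{-1}=u_\pm$, and their product is $1$. Similarly, with $c=(s+4)/3$ we have
\[
\Fcal_X(\theta,T)=\theta\bigl(\theta+\theta^{-1}-c\bigr).
\]
By Lemma~\ref{lem:R-root-expression}, $\Rcal_{X,P_5}$ is the product of $\Fcal_X(\theta,T)$ over the four roots. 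The factors $\theta$ multiply to $1$, and each value $u_\pm$ occurs for two roots. Hence
\[
\Rcal_{X,P_5}=\bigl((u_+-c)(u_--c)\bigr)^2=Q(c)^2 .
\]
Expanding, $9Q(c)=(s+4)^2-7(s+4)+11=s^2+s-1$. On the other hand, $z^{-2}\Phi_5(z)=z^2+z+1+z^{-1}+z^{-2}=s^2+s-1$. Therefore $\Rcal_{X,P_5}=\Phi_5(z)^2/(81z^4)$.

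For the invariants, note that $81z^4$ is a unit in $\mathbb Z_5[[T]]$, so we only need the reduction of $\Phi_5(z)^2$. Modulo $5$ we have $\Phi_5(z)=(z^5-1)/(z-1)\equiv(z-1)^4=T^4$. Thus $\overline{\Rcal}_{X,P_5}$ equals $T^8$ times a unit. This reduction is nonzero, which gives $\mu=0$. The only contributing prime is $(T)=(\sigma-1)$, with order $8$, so $\lambda=8$.

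For the torsion zeros, $z^4$ never vanishes at torsion points. So for $\zeta\in\mu_{5^\infty}$ we have $\Rcal_{X,P_5}(\zeta-1)=0$ exactly when $\Phi_5(\zeta)=0$, that is, when $\zeta$ is a primitive fifth root of unity. This shows the torsion non-vanishing hypothesis of Theorem~\ref{thm:resultant-iwasawa} fails.

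Finally, since $\gcd(P_5,A^2-1)=1$, Theorem~\ref{thm:exceptional} applies. It gives $\det P_5(U_n)=0$ exactly when $W_n$ contains a primitive fifth root of unity, which holds for every $n\ge1$. By Theorem~\ref{thm:multiplicity}, those characters contribute positively to $m_n(P_5)$, so roots of $P_5$ occur as Grover eigenvalues at these levels.

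The only step needing care is the resultant identity. Its key points are the bookkeeping of the factors $\theta$, whose product is $1$, and the pairing $\theta\leftrightarrow\theta^{-1}$ that produces the square. The remaining steps are routine once that identity is in place.
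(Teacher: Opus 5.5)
Your proposal is correct and follows essentially the same route as the paper: the palindromic reduction to $x=A+A^{-1}$, the pairing $\theta\leftrightarrow\theta^{-1}$ giving $\Rcal_{X,P_5}=Q(c)^2$ with $9Q(c)=s^2+s-1=z^{-2}\Phi_5(z)$, reduction modulo $5$ to $T^8$ times a unit, and Theorem~\ref{thm:exceptional} for the finite-level occurrence. Your justification $\Phi_5(z)=(z^5-1)/(z-1)\equiv(z-1)^4 \pmod 5$ and your explicit bookkeeping that the roots $\theta$ multiply to $1$ are only cosmetic refinements of the paper's argument.
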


\begin{proof}
Let
\[
s=z+z^{-1},
\qquad
x=A+A^{-1}.
\]
Since the constant term of $P_5$ is $1$, every root of $P_5$ is nonzero.
Dividing $P_5(A)$ by $A^2$, we find
\[
A^{-2}P_5(A)
=
(A+A^{-1})^2-\frac73(A+A^{-1})+\frac{11}{9}.
\]
Thus the corresponding values of $x=A+A^{-1}$ are the roots of
\[
x^2-\frac73x+\frac{11}{9}=0.
\]
For the one-vertex graph considered above,
\[
\Fcal_X(A,T)=A^2+1-\frac{A}{3}(z+z^{-1}+4).
\]
Put
\[
c=\frac{s+4}{3}.
\]
Then
\[
\Fcal_X(A,T)=A(A+A^{-1}-c)=A(x-c)
\]
for $A\ne0$. Since $P_5$ is reciprocal, its roots occur in pairs
$\theta,\theta^{-1}$ with the same value of
$x=\theta+\theta^{-1}$. For such a pair,
\[
\Fcal_X(\theta,T)\Fcal_X(\theta^{-1},T)=(x-c)^2.
\]
Hence
\[
\Rcal_{X,P_5}(T)
=
\left(c^2-\frac73c+\frac{11}{9}\right)^2
=
\left(\frac{s^2+s-1}{9}\right)^2.
\]
Since
\[
s^2+s-1=(z+z^{-1})^2+(z+z^{-1})-1
=\frac{z^4+z^3+z^2+z+1}{z^2},
\]
we obtain
\[
\Rcal_{X,P_5}(T)
=
\frac{(z^4+z^3+z^2+z+1)^2}{81z^4}
=
\frac{\Phi_5(z)^2}{81z^4}.
\]
Modulo $5$, we have
\[
\Phi_5(1+T)=(1+T)^4+(1+T)^3+(1+T)^2+(1+T)+1\equiv T^4.
\]
Since $81\equiv1\pmod 5$ and $z=1+T$ is a unit in $\mathbb F_5[[T]]$, the
reduction of the resultant is
\[
\overline{\Rcal}_{X,P_5}(T)=\frac{T^8}{(1+T)^4}=T^8(1+T)^{-4}.
\]
Since $(1+T)^{-4}$ is a unit in $\mathbb F_5[[T]]$, the order at $T=0$ is $8$. Therefore
\[
\mu(\Rcal_{X,P_5})=0,
\qquad
\lambda(\Rcal_{X,P_5})=8.
\]
Thus the resultant vanishes exactly when $\Phi_5(z)=0$, that is, when
$z$ is a primitive fifth root of unity. Since $P_5(1)P_5(-1)\ne0$, the
Bass factor does not contribute. The exceptional-zero statement now follows
from Theorem~\ref{thm:exceptional}.
\end{proof}

\begin{remark}[Summary of the Heisenberg packets]
The linear and quadratic packets lie in the torsion non-vanishing regime,
whereas $P_5$ lies in the torsion-zero regime. For $P_5$, the reduction
formula gives
\[
\overline{\Rcal}_{Y,P_5}
=
\overline{\Rcal}_{X,P_5}^{125}\ne0,
\]
so \(\Rcal_{Y,P_5}\ne0\) and the algebraic Kida formula applies to this packet as well.  It gives
\[
\lambda(\Rcal_{Y,P_5})=125\cdot 8=1000.
\]
The corresponding finite-level spectral behavior is governed by
Theorem~\ref{thm:exceptional}.

All three packets considered in this subsection satisfy
\(\mu(\Rcal_{X,P})=0\). Their algebraic \(\lambda\)-invariants and
torsion behavior are summarized as follows:
\[
\begin{array}{c|c|c|c}
P(A) & \lambda(\Rcal_{X,P}) & \lambda(\Rcal_{Y,P}) & \text{torsion behavior}\\
\hline
A-6 & 2 & 250 & \text{torsion non-vanishing}\\
(A-6)(A-11) & 4 & 500 & \text{torsion non-vanishing}\\
P_5 & 8 & 1000 & \text{torsion zeros at primitive fifth roots}\\
\end{array}
\]
The entries in the \(\lambda(\Rcal_{Y,P})\)-column are algebraic
Kida-scaled invariants supplied by Theorem~\ref{thm:kida}. For the last
row, the relevant finite-level interpretation is given by
Theorem~\ref{thm:exceptional}.
\end{remark}

\section*{Funding}
This work was supported by JSPS KAKENHI Grant Number 23K12507.

\section*{Acknowledgements}

The authors thank Professor Iwao Sato for helpful comments on the
Grover--Ihara determinant formalism and weighted graph zeta functions.

\section*{Statements and Declarations}

\paragraph{Competing interests.}
The authors have no competing interests to declare.

\paragraph{Data availability.}
Data sharing is not applicable to this article as no datasets were
generated or analyzed.

\bibliographystyle{abbrvurl}
\bibliography{references}

\end{document}